\documentclass[pdflatex,sn-mathphys-num]{sn-jnl}

\usepackage{mathtools}          %
\usepackage{amssymb,amsfonts}
\usepackage{mathrsfs}
\usepackage{bm}

\usepackage{graphicx}
\usepackage{epstopdf}
\usepackage{booktabs}
\usepackage{multirow}
\usepackage{diagbox}

\usepackage{algorithmic}

\usepackage{enumitem}
\usepackage{comment}
\usepackage{verbatim}
\usepackage{empheq}
\usepackage{colonequals}
\usepackage[group-separator={,}]{siunitx}

\usepackage{tikz-cd}
\usepackage{quiver}

\usepackage{bbm}               %

\DeclareFontFamily{U}{rsfs}{\skewchar\font127}
\DeclareFontShape{U}{rsfs}{m}{n}{%
  <-6> rsfs5
  <6-8> rsfs7
  <8-> rsfs10
}{}
\DeclareFontFamily{U}{bbold}{}
\DeclareFontShape{U}{bbold}{m}{n}{%
  <-6> bbold5
  <6-9> gen * bbold
  <9-12> bbold10
  <12-17.28> bbold12
  <17.28-> bbold17
}{}

\usepackage{xcolor}

\definecolor{cblue}{rgb}{0.121569,0.466667,0.705882}
\definecolor{corange}{rgb}{1.000000,0.498039,0.054902}
\definecolor{cgreen}{rgb}{0.172549,0.627451,0.172549}

\usepackage{hyperref}
\hypersetup{
  colorlinks = true,
  linkcolor  = cblue,
  citecolor  = cgreen,
  urlcolor   = corange,
}

\usepackage{xr-hyper}

\theoremstyle{thmstyleone}
\newtheorem{theorem}{Theorem}[section]
\newtheorem{lemma}[theorem]{Lemma}
\newtheorem{proposition}[theorem]{Proposition}
\newtheorem{corollary}[theorem]{Corollary}

\theoremstyle{thmstylethree}
\newtheorem{definition}[theorem]{Definition}
\newtheorem{assumption}[theorem]{Assumption}
\newtheorem{example}[theorem]{Example}

\theoremstyle{thmstyletwo}

\makeatletter
\newcommand*{\addFileDependency}[1]{%
  \typeout{(#1)}%
  \@addtofilelist{#1}%
  \IfFileExists{#1}{}{\typeout{No file #1.}}%
}
\makeatother

\renewcommand{\bar}{\overline}

\renewcommand{\emptyset}{\varnothing}

\makeatletter
\def\moverlay{\mathpalette\mov@rlay}
\def\mov@rlay#1#2{\leavevmode\vtop{%
   \baselineskip\z@skip \lineskiplimit-\maxdimen
   \ialign{\hfil$\m@th#1##$\hfil\cr#2\crcr}}}
\newcommand{\charfusion}[3][\mathord]{
    #1{\ifx#1\mathop\vphantom{#2}\fi
        \mathpalette\mov@rlay{#2\cr#3}
      }
    \ifx#1\mathop\expandafter\displaylimits\fi}
\makeatother

\newcommand{\PP}{\mathbb{P}}

\newcommand{\RR}{\mathbb{R}}
\renewcommand{\SS}{\mathbb{S}}

\DeclareSymbolFont{bbold}{U}{bbold}{m}{n}
\DeclareSymbolFontAlphabet{\mathbbold}{bbold}

\newcommand{\sA}{\mathcal{A}}
\newcommand{\sB}{\mathcal{B}}
\newcommand{\sC}{\mathcal{C}}
\newcommand{\sD}{\mathcal{D}}
\newcommand{\sE}{\mathcal{E}}

\newcommand{\sG}{\mathcal{G}}
\newcommand{\sH}{\mathcal{H}}
\newcommand{\sI}{\mathcal{I}}
\newcommand{\sJ}{\mathcal{J}}
\newcommand{\sK}{\mathcal{K}}
\newcommand{\sL}{\mathcal{L}}
\newcommand{\sM}{\mathcal{M}}
\newcommand{\sN}{\mathcal{N}}
\newcommand{\sO}{\mathcal{O}}
\newcommand{\sP}{\mathcal{P}}

\newcommand{\sQ}{\mathcal{Q}}
\newcommand{\sR}{\mathcal{R}}
\newcommand{\sS}{\mathscr{S}}
\newcommand{\sT}{\mathcal{T}}
\newcommand{\sU}{\mathcal{U}}
\newcommand{\sV}{\mathcal{V}}
\newcommand{\sW}{\mathcal{W}}
\newcommand{\sX}{\mathcal{X}}
\newcommand{\sY}{\mathcal{Y}}
\newcommand{\sZ}{\mathcal{Z}}

\renewcommand{\sl}{\mathfrak{sl}}

\DeclareSymbolFont{sfoperators}{OT1}{cmss}{m}{n}
\DeclareSymbolFontAlphabet{\mathsf}{sfoperators}
\makeatletter
\renewcommand{\operator@font}{\mathgroup\symsfoperators}
\makeatother

\DeclareMathOperator{\rank}{rank}
\DeclareMathOperator{\diag}{diag}

\DeclareMathOperator{\conv}{conv}

\DeclareMathOperator{\supp}{supp}
\DeclareMathOperator{\Span}{span}
\DeclareMathOperator{\gph}{Gph}

\renewcommand\Im{\operatorname{Im}}
\newcommand{\Ex}{\mathop{\mathbb{E}}}

\DeclareMathOperator{\dist}{dist}

\DeclareMathOperator{\sign}{sign} 
\DeclareMathOperator{\dom}{dom} 
\DeclareMathOperator{\epi}{epi}

\DeclareMathOperator{\relint}{relint} 
\DeclareMathOperator{\col}{col}

\DeclarePairedDelimiterX{\inp}[2]{\langle}{\rangle}{#1, #2}%

\newcommand{\eps}{\varepsilon}

\newcommand{\norm}[1]{\left\lVert #1\right\rVert}

\newif\ifshowcomments
\showcommentstrue

\newcommand{\cl}{\operatorname{cl}}
\usepackage[normalem]{ulem}
\usepackage{svg}
\usepackage{float} 
\usepackage{array}
\usepackage{graphicx}
\usepackage{subcaption}

\usepackage{tcolorbox}
\newtcolorbox{theorembox}{
	colback=blue!4,
	colframe=blue!30!black!40,
	boxrule=0.5pt,
	arc=3pt,
	left=10pt,right=10pt,top=8pt,bottom=8pt,
}

\title{On the Absence of Identifiable Manifolds in Finite-Max Composite Optimization}

\author*[1]{\fnm{Yifan} \sur{Wang}}
\author*[2]{\fnm{Jianhao} \sur{Ma}}
\author*[1]{\fnm{Salar} \sur{Fattahi}}
\affil[1]{\orgdiv{Department of Industrial and Operations Engineering}, \orgname{University of Michigan}, \orgaddress{\city{Ann Arbor}, \state{MI}, \country{USA}}}
\affil[2]{\orgdiv{Department of Industrial Engineering}, \orgname{Tsinghua University}, \orgaddress{\state{Beijing}, \country{China}}}

\abstract{In nonsmooth optimization, identifiable sets describe the local region eventually reached by sequences converging to a prescribed critical point. When such a set is a $C^2$ manifold on which the objective restricts to a $C^2$ function, it is called an identifiable manifold. Their appeal lies in what they enable: many first-order methods identify these manifolds in finitely many iterations, after which the iterates enter a region in which the problem is effectively smooth. Consequently, many powerful tools and guarantees from smooth optimization transplant naturally to the nonsmooth setting. Owing to these properties, much existing work has focused on characterizing conditions that guarantee their existence.
	In this work, we study a complementary question: under what conditions is a critical point devoid of any identifiable manifold? We answer this by developing a deterministic branching criterion for a broad class of finite-max composite optimization problems, characterizing when a critical point admits no identifiable manifold. This criterion is surprisingly mild in certain classes of problems: it holds with high probability for overparameterized robust low-rank recovery and almost surely at common interpolators of random minimax regression, suggesting that the absence of identifiable manifolds may be the rule rather than the exception in modern optimization.
}

\keywords{identifiable sets, active manifolds, nonsmooth optimization, strict saddle points, overparameterized robust low-rank optimization}

\hypersetup{
	pdftitle={Absence of Identifiable Manifolds in Finite-Max Optimization},
	pdfauthor={Yifan Wang, Jianhao Ma, Salar Fattahi}
}

\ifdefined\pdfsuppresswarningdupdest
\fi
\ifdefined\pdfsuppresswarningpagegroup
\fi

\begin{document}
	\raggedbottom
	\maketitle
	
	\section{Introduction}\label{sec:introduction}

	A recurring theme in nonsmooth optimization is that apparent nonsmoothness often conceals a smoother underlying structure. The idea is to identify a local set $\sM$ near a critical point $\bar{x}$ that captures all nearly critical behavior: whenever $x_\nu\to\bar{x}$ and $v_\nu\in\partial f(x_\nu)$ satisfies $v_\nu\to0$, the points $x_\nu$ eventually lie in $\sM$. The most powerful case is when $\sM$ is a smooth manifold on which $f$ restricts smoothly, so that the local analysis reduces to smooth analysis on $\sM$. Moreover, once a first-order method identifies $\sM$, its subsequent iterates evolve on the manifold and effectively optimize the smooth restricted objective, enabling guarantees such as convergence to minimizers and avoidance of strict saddle points~\citep{lee2016gradient,lee2019first}. This perspective underlies a long line of work on partial smoothness~\citep{lewis2002active,hare2004identifying,hare2007identifying}, active-set methods~\citep{wright1993identifiable,burke1988identification,burke1990identification}, sensitivity theory~\citep{bonnans2000perturbation,rockafellar2009variational,dontchev2022lectures}, and, more recently, active-strict-saddle theory for weakly convex optimization~\citep{davis2022proximal,bianchi2024stochastic}.
	
	The interest in identifiable manifolds is driven by what they make possible at a critical point. Identifiable manifolds enable smooth local analysis at critical points. Because the restriction $f|_{\sM}$ is $C^2$, conditions such as positive-definiteness of the restricted Hessian can certify local minimality, and the restriction becomes the natural object for sensitivity and tilt-stability analysis~\citep{lewis2002active,drusvyatskiy2014optimality,bonnans2000perturbation}. Proximal-based and dual averaging methods can identify the manifold in finitely many iterations and subsequently inherit smooth convergence rates~\citep{hare2007identifying,lewis2016proximal,liang2017activity,lee2012manifold}, while manifold-restricted Newton updates attain fast local convergence~\citep{lewis2016proximal,lewis2002active}. Identifiable manifolds also underlie the theory of {active strict saddles}~\citep{davis2022proximal}: critical points lying on a $C^2$ identifiable manifold along which the objective decreases smoothly and quadratically in some tangent direction. Such points are provably avoided by several algorithms, including stochastic subgradient~\citep{bianchi2024stochastic} and proximal-type methods~\citep{davis2022proximal}.
	
	Owing to these benefits, the existing literature has focused almost entirely on sufficient conditions for the \emph{existence} of identifiable manifolds. The strongest such result is a genericity theorem: for almost every linear perturbation of a weakly convex semialgebraic function, the perturbed function admits an identifiable manifold at each of its critical points~\citep[Corollary 4.8]{drusvyatskiy2016generic}. A critical point with no identifiable manifold thus appears to be a measure-zero event, removable by a generic perturbation, leading most existing theory to exclude such points altogether. This view is mathematically sound, but it may understate the practical cost of perturbing the objective: perturbation does not merely alter the local geometry around a critical point but reshapes the geometry globally. As the following example illustrates, even an arbitrarily small linear perturbation can destroy useful global structure; a bounded problem with a benign landscape can become unbounded below.

	\begin{example}\label{ex:perturbation-cost}
		Let $r:=\sqrt{x^2+y^2}$ and consider
		\[
		h(x,y)
		=
		\chi(r)\left(5+(|x|+|y|)^2-2x^2\right),
		\]
		where $\chi$ is the $C^2$ cutoff
		\[
		\chi(r)
		=
		\begin{cases}
			1,
			& 0\leq r\leq \frac{1}{2},\\[1mm]
			\omega\left(\frac{2r-1}{3}\right),
			& \frac{1}{2}<r<2,\\[1mm]
			0,
			& r\geq 2,
		\end{cases}
		\qquad
		\omega(t):=1-10t^3+15t^4-6t^5.
		\]
		The local minimizers of $h$ are exactly the points satisfying $x^2+y^2\geq 4$, all of which attain the global minimum value $0$, while every other critical point, including the origin, is a strict saddle; see Figure~\ref{fig:ex11-global-landscape}. Specifically, near the origin, $h(x,y)=5+y^2-x^2+2|xy|,$ so $h(t,0)=5-t^2$, and hence the origin is a strict saddle. For every sequence $(x_k,y_k)\to(0,0)$, one can choose subgradients $v_k\in\partial h(x_k,y_k)$ with $v_k\to(0,0)$. Consequently, any identifiable set at the origin must contain a full neighborhood of the origin. Since $h$ is not $C^2$ on any such neighborhood, the origin admits no $C^2$ identifiable manifold and is therefore an inactive strict saddle; see Figure~\ref{fig:ex11-local-landscape}. 
		Any linear perturbation, no matter how small, makes the problem unbounded below. To see this, for any $a\neq 0$, define $h_a(z):=h(z)-\langle a,z\rangle.$ Since $h(ta)=0$ for all sufficiently large $t$,
		\[
		h_a(ta)=-t\|a\|^2\longrightarrow-\infty, \quad \text{as } t\to+\infty.
		\]
	\end{example}

	\begin{figure}
		\centering
		\begin{subfigure}[t]{0.49\textwidth}
			\centering
			\includegraphics[width=\textwidth]{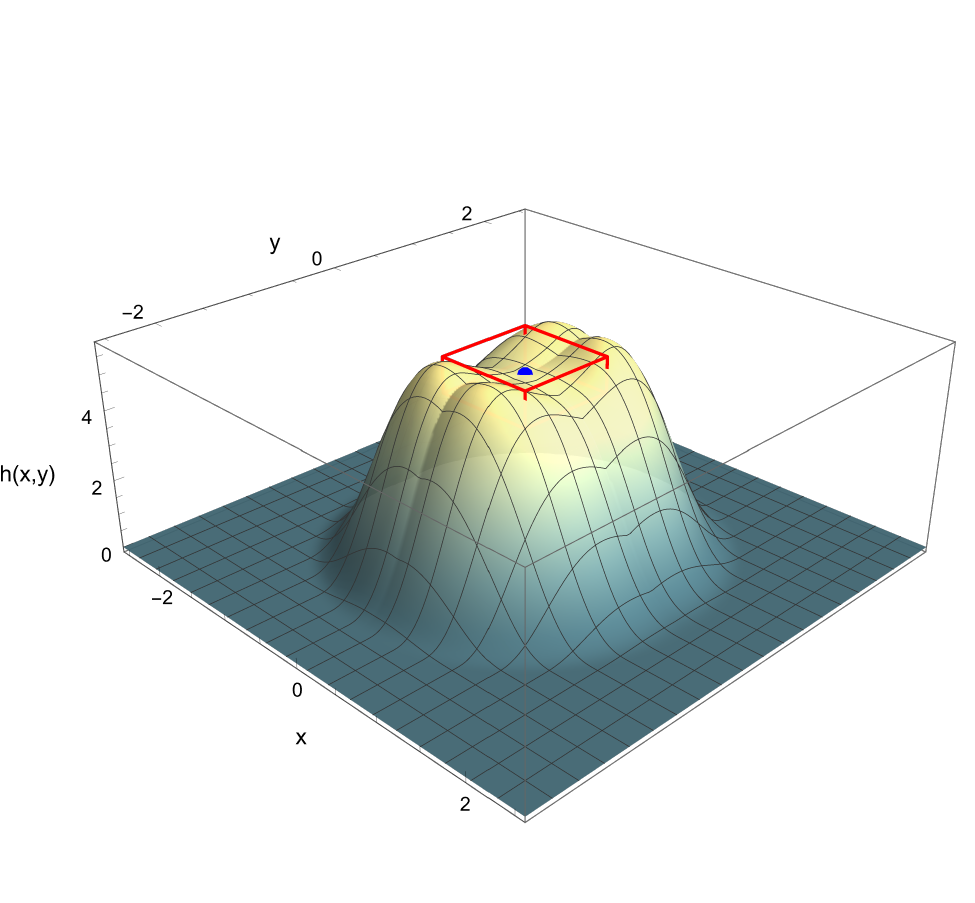}
			\caption{Global landscape}
			\label{fig:ex11-global-landscape}
		\end{subfigure}
		\hfill
		\begin{subfigure}[t]{0.49\textwidth}
			\centering
			\includegraphics[width=\textwidth]{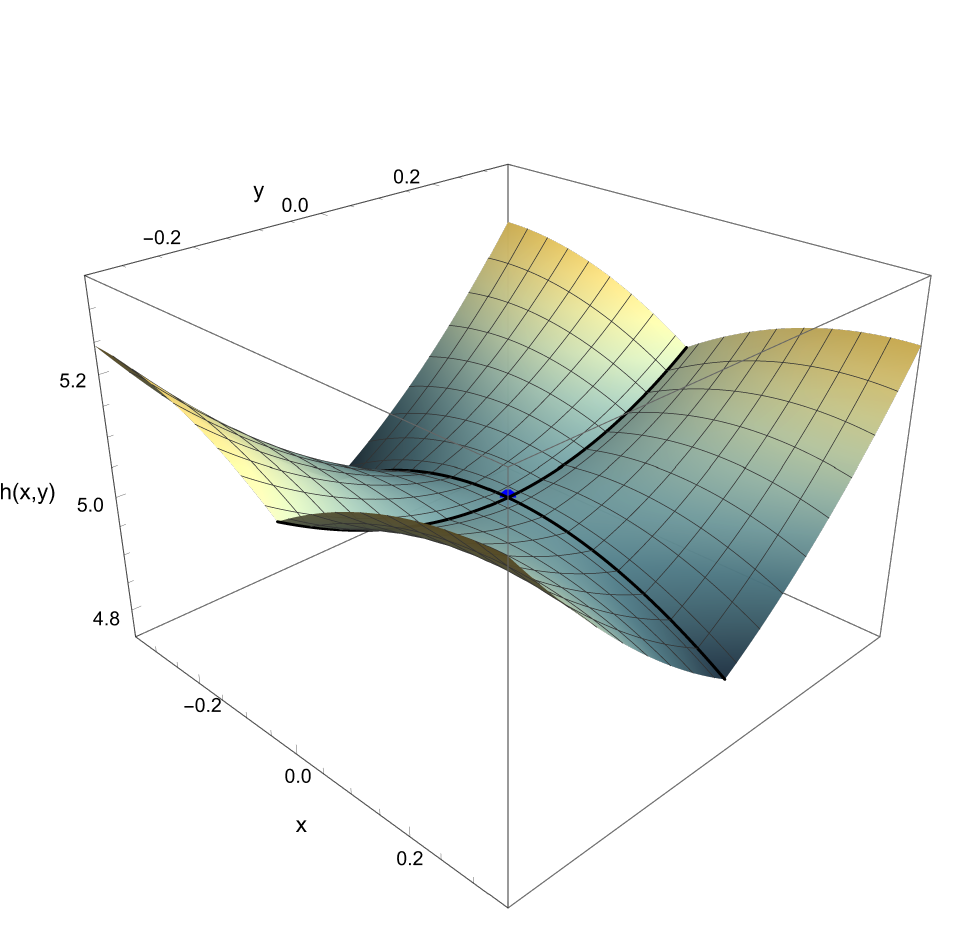}
			\caption{Local landscape}
			\label{fig:ex11-local-landscape}
		\end{subfigure}
		
		\caption{Global and local landscape of the function $h$ in Example~\ref{ex:perturbation-cost}. \emph{Left:} the benign landscape with the localized nonsmooth landscape at the origin; the red box marks the region enlarged in the right panel. \emph{Right:} the corresponding local landscape near the origin, where the origin is an inactive strict saddle. The two black curves mark the coordinate axes, the nonsmooth locus of the local term $|xy|$.}
		\label{fig:ex11-landscapes}
	\end{figure}
	
	\begin{figure}[t]
		\centering
		\begin{subfigure}[t]{0.48\linewidth}
			\centering
			\includegraphics[width=\linewidth]{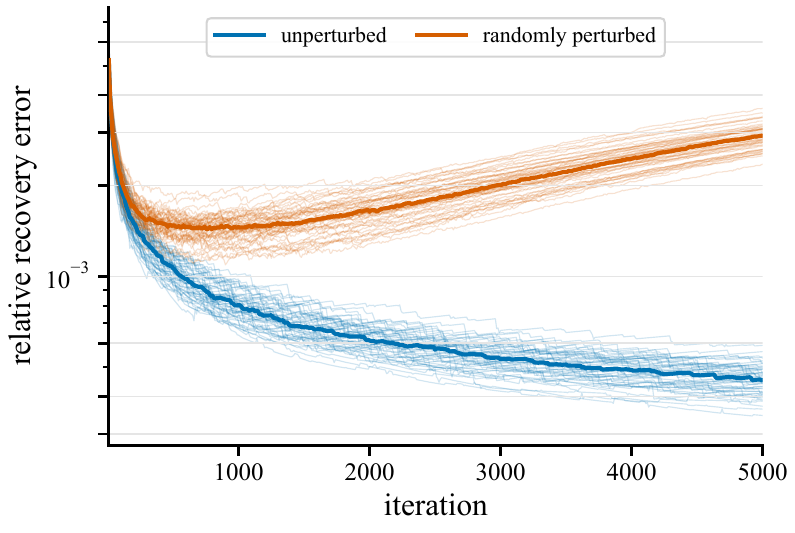}
			\caption{Relative recovery error}
			\label{fig:small-tilt-error}
		\end{subfigure}
		\hfill
		\begin{subfigure}[t]{0.48\linewidth}
			\centering
			\includegraphics[width=\linewidth]{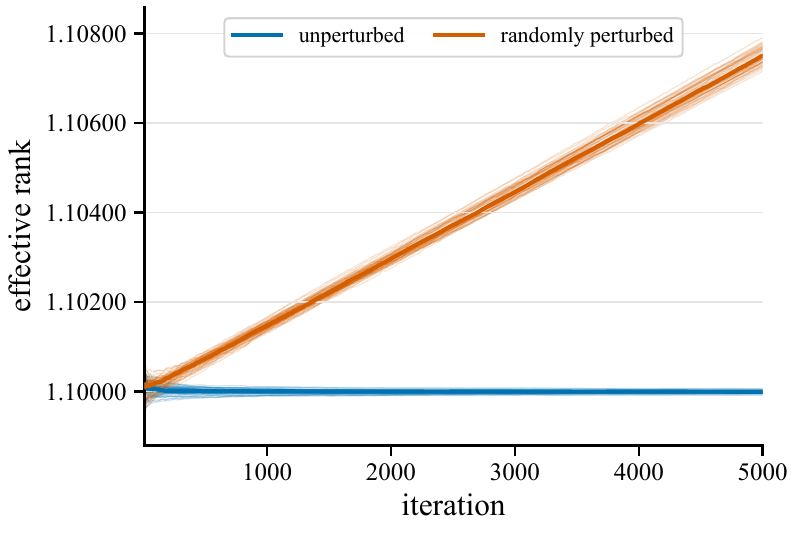}
			\caption{Effective rank}
			\label{fig:small-tilt-rank}
		\end{subfigure}
		\caption{Behavior of the subgradient method on overparameterized low-rank matrix sensing with $\ell_1$-loss. The ground-truth factor $U^\star$, satisfying $U^\star{U^\star}^\top=X^\star$, is a critical point admitting no identifiable manifold. From local random initializations, the unperturbed method converges sublinearly to $X^\star$ and retains effective rank near $1.1$, whereas a small random perturbation prevents convergence and consistently increases the effective rank. The relative recovery error and factor effective rank are
			$\norm{U_tU_t^\top-X^\star}_F/\norm{X^\star}_F$ and
			$\norm{U_t}_F^2/\norm{U_t}_{\mathrm{op}}^2$, respectively. Thin curves show the $50$ paired trials, and thick curves show their coordinatewise medians. See Appendix~\ref{app:low-rank-experiment} for details.}
		\label{fig:small-random-tilt-dynamics}
	\end{figure}
	
	The above example is not an isolated case. Figure~\ref{fig:small-random-tilt-dynamics} shows that the same tension arises---more sharply on the algorithmic side---in an important class of problems: robust low-rank matrix sensing with overparameterized rank, details of which we defer to Section~\ref{sec:low-rank-specialization}. There, the subgradient method converges to the true low-rank solution, which we show is a critical point admitting no identifiable manifold, yet convergence fails once the problem is subject to a small random perturbation.
	
	This motivates the main question studied in this paper:\vspace{3mm}
	
	\begin{tcolorbox}
		{\it Under what conditions does the original, unperturbed problem \textit{fail} to admit a $C^2$-identifiable manifold at a critical point?}
	\end{tcolorbox}\vspace{3mm}
	Beyond a few special cases, this question has not been studied systematically; see Section~\ref{subsec::related-work}. We address it for finite-max composite problems, a broad class encompassing robust estimation with $\ell_1$-loss~\citep{charisopoulos2021low,wright2010dense}, ReLU and hinge losses, finite distributionally robust optimization~\citep{wiesemann2014distributionally}, and exact-penalty reformulations of constrained problems~\citep{wright1999numerical}.
	For this class, we provide deterministic geometric conditions, called the \emph{branching criterion}, which when satisfied at a critical point rule out the existence of any identifiable manifold, and with it the algorithmic guarantees described above. We summarize the contributions of this paper below.

	\begin{enumerate}
		\item \textit{A deterministic obstruction.} For finite-max composite optimization, we provide a branching criterion that, when satisfied at a given critical point, rules out the existence of any $C^2$ identifiable manifold (Theorem~\ref{thm:branching-criterion}). The key challenge is that establishing the nonexistence of a $C^2$ identifiable manifold is a priori challenging, as one must rule out all possible manifolds simultaneously. We address this via a contrapositive argument: if a $C^2$ identifiable manifold exists, then a minimal one exists and must be locally unique. We explicitly characterize this hypothetical minimal manifold and show that the branching criterion precisely excludes it, thereby certifying nonexistence. This criterion admits a particularly geometric interpretation for stratifiable functions: it requires that two strata of the identifiable set, reachable along a common set of tangent directions, induce different quadratic forms on those directions. The existence of such incompatible directions precludes an identifiable manifold from existing.
		\item \textit{Applications.} We show that the branching criterion is far from being pathological: it is satisfied with high probability or almost surely for several important statistical estimation problems, including overparameterized nonnegative sparse regression (Theorem~\ref{thm:nonnegative-sparse}), symmetric matrix sensing (Theorem~\ref{thm:sym-MS}), asymmetric matrix sensing (Theorem~\ref{thm:asym-MS}), and finite-scenario minimax optimization (Theorem~\ref{thm:minimax-interpolation} and Corollary~\ref{cor:random-minimax-regression}). These results point to a common structural phenomenon across overparameterized learning tasks: critical points without identifiable manifolds may be universal rather than exceptional.
	\end{enumerate}
	
	\subsection{Related Work}\label{subsec::related-work}
	
	\paragraph*{Identifiability and partial smoothness} 
	The idea that nonsmooth optimization problems often carry a hidden smooth structure goes back at least to the 1990s, with the identifiable surfaces introduced by~\citet{wright1993identifiable} for constrained optimization and the active-set identification results of~\citet{burke1988identification} and~\citet{burke1990identification}, which showed that the optimal active set is correctly identified once iterates are close enough to a solution. \citet{lewis2002active} unified these threads under the notion of \textit{partial smoothness}, isolating the geometric conditions---smoothness along a manifold and sharpness normal to it---that make such identification possible; \citet{hare2004identifying, hare2007identifying} then tied this geometry to concrete algorithms. Two parallel perspectives refine the same picture: the $\mathcal{U}$--$\mathcal{V}$ decomposition of~\citet{lemarechal2000U}, which splits the space into directions of smoothness and of nonsmoothness, and the cone-reducibility theory of~\citet{bonnans2000perturbation}, developed for sensitivity analysis.~\citet{drusvyatskiy2012optimalityidentifiabilitysensitivity, drusvyatskiy2014optimality} recast the theory in terms of \textit{identifiable sets}, which need not be assumed manifolds at the outset. We adopt this point of view in our paper.

	\paragraph*{Active-set and active-manifold algorithms} 
	The terms \emph{active manifold} (from partial-smoothness theory) and \emph{identifiable manifold} (from the identifiable-set theory) refer to the same object at a critical pair: \citet[Proposition~10.8]{drusvyatskiy2012optimalityidentifiabilitysensitivity} show that the active manifold of a partly smooth function is identifiable, and conversely a $C^2$ identifiable manifold is active in the partial-smoothness sense. 
	A parallel line of work studies algorithms that identify this manifold in finite time and then exploit its smooth structure. Classical results include those of~\citet{burke1988identification} and~\citet{calamai1987projected} for projected gradient methods on polyhedral constraints. In the nonsmooth setting,~\citet{hare2007identifying} showed that the proximal point method identifies the active manifold of a partly smooth function, and~\citet{lewis2016proximal} extended this to proximal-gradient methods for composite problems.~\citet{liang2017activity} established analogous identification results for forward-backward and primal-dual splitting methods, and~\citet{lee2012manifold} obtained identification guarantees for regularized dual averaging in stochastic online learning. All of these results presuppose the existence of a smooth active manifold; the present paper studies the obstruction to that presupposition. 
	In the context of constrained optimization, both the second-order methods of~\citet{wright2002modifying} and~\citet{wright2003constraint} and the more recent first-order methods of~\citet{diaz2026active} guarantee convergence to degenerate solutions where strict complementarity fails. Although such degenerate solutions are often associated with the absence of an identifiable manifold, this connection is not causal: degeneracy alone does not guarantee absence of an identifiable manifold, as we discuss next.
	
	\paragraph*{Absence of identifiable manifolds for composite functions}
	For a composite function $h = g \circ F$, the absence of identifiable manifolds is often associated with degeneracy of the critical points, namely when the Jacobian $DF$ loses rank at a critical point~\citep{diaz2025preconditioned}. This degeneracy is closely related to {\it overparameterization} in modern statistical models, especially factored low-rank optimization and rank-overspecified robust recovery~\citep{charisopoulos2021low, chi2019nonconvex, ding2021rank, tong2021accelerating, ma2023global, ma2025understanding}. 
	Local-search algorithms often converge to such degenerate points~\citep{tong2021accelerating, diaz2025preconditioned, ma2018implicit, zhang2023preconditioned, ma2025understanding, ma2023global, qian2021structures}.
	However, rank deficiency of $DF$ alone does not preclude the existence of an identifiable manifold for $h$: it only means that $F$ cannot serve as a local defining map for such a manifold~\citep[Definition~3.1]{boumal2023introduction}. For instance, the $x$-axis $\sM=\{(x,y):y=0\}$ is a smooth manifold with valid local defining map $h_1(x,y):=y$ with a full-rank Jacobian, satisfying $\sM=h_1^{-1}(0)$. The map $h_2(x,y):=y^2$ also satisfies $\sM=h_2^{-1}(0)$ near the origin, but its Jacobian vanishes there, disqualifying it as a local defining map. This shows that the rank deficiency of one representation of $\sM$ does not preclude the existence of a valid one. 
	
	Ruling out a $C^2$ identifiable manifold is therefore considerably more delicate: one must exclude every valid local defining map, not merely show that the natural candidate $F$ fails. To the best of our knowledge, the only existing sufficient conditions are first-order. Specifically, \citet[Proposition~10.12]{drusvyatskiy2012optimalityidentifiabilitysensitivity} rule out an identifiable manifold when zero does not lie in the relative interior of the Fr\'echet subdifferential, while \citet[Theorem~4.11]{drusvyatskiy2016generic} do so when the critical cone is not a linear subspace. These conditions are equivalent~\citep[Proposition~2.1]{borwein1992partially}.
	In our setting, however, both conditions are too restrictive to be applicable: Proposition~\ref{prop:interior-point-event} shows that they fail with high probability in robust low-rank recovery. Thus, first-order geometry alone does not detect inactivity in our setting. The obstruction instead appears at second order: distinct accessible branches induce incompatible quadratic expansions that cannot arise from a single $C^2$ restriction. Capturing this incompatibility is the purpose of our branching criterion.

	\subsection{Organization}
	Section~\ref{sec:preliminaries} reviews the necessary background and introduces finite-max composite problems. Section~\ref{sec:deterministic-criterion} characterizes their locally minimal identifiable sets and establishes the branching criterion (Theorem~\ref{thm:branching-criterion}). Section~\ref{sec:minimax-interpolation} applies the branching criterion to common interpolators of finite-scenario minimax problems. Section~\ref{sec:low-rank-specialization} specializes this criterion to overparameterized robust low-rank recovery, reducing it to a geometric cone-selection test (Theorem~\ref{thm:spectrahedral-deterministic}) and deriving model-specific guarantees for nonnegative sparse regression, symmetric matrix sensing, and asymmetric matrix sensing (Theorems~\ref{thm:nonnegative-sparse},~\ref{thm:sym-MS}, and~\ref{thm:asym-MS}). Concluding remarks are provided in Section~\ref{sec:conclusion}.

	\section{Notation and Basic Constructions}\label{sec:preliminaries}

	All vector and matrix spaces are finite-dimensional Euclidean spaces over
	$\RR$, equipped with their standard inner products and induced norms. On
	matrix spaces, these are the Frobenius inner product and norm.
	We write $\|A\|_{\mathrm{op}}$ for the operator norm. The space of
	$n\times n$ symmetric matrices is denoted by $\SS^n$. For
	$X\in\SS^n$, the notation $X\succ0$ and $X\succeq0$ means that $X$ is
	positive definite and positive semidefinite, respectively. We write $I_n$
	for the $n\times n$ identity matrix and $e_i$ for the $i$th standard basis
	vector.
	
	For a positive integer $m$, let $[m]:=\{1,\ldots,m\}$. For
	$S\subseteq[m]$, we write $S^c:=[m]\setminus S$ and use $x_S$ to denote
	the restriction of a vector $x\in\RR^m$ to the coordinates in $S$. The
	sign function $\sign(x)\in\{-1,0,+1\}^m$ is applied componentwise. The
	Hadamard product is denoted by $\odot$, while $\circ$ denotes function
	composition.
	
	For Euclidean spaces $\sV$ and $\sW$ and a linear map
	$T:\sV\to\sW$, we write $T^*$, $\Im T$, $\ker T$, and $\rank T$ for its
	adjoint, range, nullspace, and rank, respectively. For a linear subspace
	$\sU\subseteq\sV$, we write $\sU^\perp$ for its orthogonal complement and
	$P_{\sU}$ for the orthogonal projection onto $\sU$. For a matrix $M$,
	$\col M$ denotes its column space, and $\Span\mathcal S$ denotes the
	linear span of a set $\mathcal S\subseteq\sV$.
	For a set $\sX\subseteq\RR^n$, we write $\cl\sX$, $\relint\sX$, and
	$\conv\sX$ for its closure, relative interior, and convex hull,
	respectively.
	For a polytope $\sP$, $\operatorname{Vert}(\sP)$ denotes its set of
	vertices.
	Given a finite index set $\sI$, the probability simplex on $\sI$ is
	\[
	\Delta_{\sI}
	:=
	\left\{
	\lambda\in\RR_+^{\sI}:
	\sum_{i\in\sI}\lambda_i=1
	\right\},
	\qquad
	\RR_+:=\{t\in\RR:t\geq0\}.
	\]
	For $\lambda\in\RR^{\sI}$, write $\supp\lambda:=\{i\in\sI:\lambda_i\neq0\}$ and $\|\lambda\|_0:=|\supp\lambda|$. For a function $h:\RR^d\to\RR\cup\{+\infty\}$, its domain, graph, and
	epigraph are
	\begin{align*}
		\dom h
		&:=\{x\in\RR^d:h(x)<+\infty\},\\
		\gph h
		&:=\{(x,h(x))\in\RR^d\times\RR:x\in\dom h\},\\
		\epi h
		&:=\{(x,r)\in\RR^d\times\RR:r\geq h(x)\}.
	\end{align*}
	For a set-valued map
	$\Phi:\RR^d\rightrightarrows\RR^m$, we define $\gph\Phi
	:=
	\{(x,v)\in\RR^d\times\RR^m:v\in\Phi(x)\}.$
	
	We write $N(\mu,\Sigma)$ for the multivariate normal distribution with mean $\mu$ and covariance $\Sigma$, and $\mathrm{GOE}(n)$ for the Gaussian Orthogonal Ensemble of $n\times n$ symmetric matrices with i.i.d.\ $N(0,1/2)$ off-diagonal entries and i.i.d.\ $N(0,1)$ diagonal entries. Probability is denoted by $\PP$ and expectation by $\Ex$.
	
	\subsection{Manifolds and Definable Sets}\label{subsec:manifolds-definable}
	
	We recall basic facts from differential geometry. For smooth manifolds, we follow~\citet{boumal2023introduction} and~\citet{lee2012introduction}; for o-minimal structures and stratifications, we follow~\citet{van1998tame} and~\citet{coste2000introduction}.

	For a $C^k$ map $F:\RR^d\to\RR^m$, we denote its derivative at $x$ by a linear map $DF(x):\RR^d\to\RR^m$. When $m=1$, we identify $DF(x)$ with the gradient $\nabla F(x)\in\RR^d$ and denote the Hessian by $\nabla^2F(x)$. If $F$ is $C^2$, we write $D^2F(x)[v,w]$ for its second derivative evaluated along $v,w\in\RR^d$ and $D^2F(x)[v,v]$ for the associated quadratic form along $v$.
	
	A set $\sM\subset\RR^d$ is a \emph{$C^k$ embedded submanifold} (or simply a \emph{manifold}) of dimension $p$ around $\bar x\in\sM$ if it is defined locally by a smooth nondegenerate equation: there is a neighborhood $\sU$ of $\bar x$ and a $C^k$ map $\Phi:\sU\to\RR^{d-p}$ with surjective derivative such that $\sM\cap \sU=\Phi^{-1}(0)$. Its \emph{tangent} and \emph{normal} spaces at $\bar x$ are then simply $T_{\bar x}\sM:=\ker D\Phi(\bar x)$ and $N_{\bar x}\sM:=(T_{\bar x}\sM)^\perp$, both independent of the defining map $\Phi$, and $P_{T_{\bar x}\sM}$ denotes the orthogonal projection onto the tangent space.
	
	Let $h$ be $C^1$ on $\sM$ near $\bar x$. Its \emph{Riemannian gradient} is $\nabla_{\sM}h(\bar x):=P_{T_{\bar x}\sM}\nabla\widetilde h(\bar x)$, where $\widetilde h$ is any $C^1$ extension of $h$; this definition is independent of the extension. Thus, $\bar x$ is critical for $h|_{\sM}$ when $\nabla_{\sM}h(\bar x)=0$. If $\sM$ is $C^2$, the second-order term of a local tangent-space parametrization captures its curvature, as formalized in the next lemma.
	
	\begin{lemma}[Second-order local parametrization, Chapter~5 of~\citet{boumal2023introduction}]\label{lem:chart-second-order}
		Let $\sM$ be a $C^2$ manifold around $\bar x$. There is a $C^2$ map $\gamma$ from a neighborhood of $0$ in $T_{\bar x}\sM$ onto a neighborhood of $\bar x$ in $\sM$, with $\gamma(0)=\bar x$, of the form
		\begin{equation}\label{eq:chart-second-order}
			\gamma(v)=\bar x+v+Z_{\sM}(v)+o(\|v\|^2),
			\qquad v\in T_{\bar x}\sM,
		\end{equation}
		where $Z_{\sM}:T_{\bar x}\sM\to N_{\bar x}\sM$ is a homogeneous quadratic map.
	\end{lemma}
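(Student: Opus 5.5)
We prove the lemma with the implicit function theorem applied to the tangent-space projection. Let $T:=T_{\bar x}\sM$ and $N:=N_{\bar x}\sM$, and let $\Phi:\sU\to\RR^{d-p}$ be a $C^2$ local defining map with $D\Phi(\bar x)$ surjective and $\ker D\Phi(\bar x)=T$. Write each point near $\bar x$ as $\bar x+v+w$ with $v\in T$ and $w\in N$. Define $G(v,w):=\Phi(\bar x+v+w)$, which is $C^2$ on a neighborhood of $(0,0)$ and satisfies $G(0,0)=0$. Its partial derivative in $w$ at the origin is $D\Phi(\bar x)|_N$. This map is injective, since $N\cap\ker D\Phi(\bar x)=\{0\}$, and $\dim N=d-p$, so it is a linear isomorphism $N\to\RR^{d-p}$.

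By the $C^2$ implicit function theorem there are neighborhoods $V\ni 0$ in $T$ and $W\ni 0$ in $N$, and a unique $C^2$ map $\psi:V\to W$ with $\psi(0)=0$, such that for $(v,w)\in V\times W$ we have $G(v,w)=0$ if and only if $w=\psi(v)$. Set
\[
\gamma(v):=\bar x+v+\psi(v).
\]
Then $\gamma(V)=\sM\cap(\bar x+V+W)$, and $\bar x+V+W$ is a neighborhood of $\bar x$. Hence $\gamma$ maps $V$ onto a neighborhood of $\bar x$ in $\sM$, and $\gamma$ is $C^2$ with $\gamma(0)=\bar x$.

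Next we identify the Taylor expansion of $\psi$. Differentiating $G(v,\psi(v))=0$ at $v=0$ gives
\[
D\Phi(\bar x)\bigl[u+D\psi(0)u\bigr]=0\quad\text{for all } u\in T.
\]
Since $D\Phi(\bar x)u=0$ for $u\in T$, this reduces to $D\Phi(\bar x)[D\psi(0)u]=0$. But $D\psi(0)u\in N$, and $D\Phi(\bar x)$ is injective on $N$, so $D\psi(0)=0$. Because $\psi$ is $C^2$, Taylor's theorem then gives $\psi(v)=\tfrac12 D^2\psi(0)[v,v]+o(\|v\|^2)$. Set $Z_{\sM}(v):=\tfrac12 D^2\psi(0)[v,v]$. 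This is a homogeneous quadratic map $T\to N$, since $\psi$ takes values in $N$. Substituting into the definition of $\gamma$ yields \eqref{eq:chart-second-order}.

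If one wants $Z_{\sM}$ explicitly, differentiate the identity $G(v,\psi(v))=0$ a second time at the origin. This gives
\[
Z_{\sM}(v)=-\tfrac12\bigl(D\Phi(\bar x)|_N\bigr)^{-1}D^2\Phi(\bar x)[v,v],
\]
which is also independent of the choice of $\Phi$. The explicit form is not needed for the lemma.

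The only point that needs care is the surjectivity claim: that $\gamma$ maps onto a neighborhood of $\bar x$ in $\sM$. This follows from the uniqueness part of the implicit function theorem after shrinking $\sU$ so that $\sU\subseteq\bar x+V+W$. Everything else is routine.
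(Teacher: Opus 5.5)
Your proof is correct, but it takes a different route from the one the paper relies on. The paper does not prove this lemma; it cites Chapter~5 of Boumal. There the expansion comes from second-order retractions on a smooth Riemannian submanifold, $R_{\bar x}(v)=\bar x+v+\tfrac12\mathrm{II}(v,v)+O(\|v\|^3)$, so that $Z_{\sM}=\tfrac12\mathrm{II}$ is half the second fundamental form.

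You instead build the graph chart over $T_{\bar x}\sM$ directly from a $C^2$ defining map via the implicit function theorem. You then get $D\psi(0)=0$ from $\ker D\Phi(\bar x)=T_{\bar x}\sM$ and read off $Z_{\sM}$ as the second-order Taylor term of $\psi$.

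Your route has two advantages:
\begin{enumerate}
\item It is self-contained and matches the lemma's hypothesis exactly. The lemma assumes only a $C^2$ manifold, whereas Boumal's setting is $C^\infty$. Moreover, the usual second-order retraction (metric projection) onto a $C^2$ manifold is in general only $C^1$, so the citation needs some adaptation that your argument avoids.
\item It yields the explicit formula $Z_{\sM}(v)=-\tfrac12\bigl(D\Phi(\bar x)|_{N_{\bar x}\sM}\bigr)^{-1}D^2\Phi(\bar x)[v,v]$, which is convenient when $\sM$ is given by equations.
\end{enumerate}
The cited route, in turn, ties $Z_{\sM}$ to the second fundamental form, giving its intrinsic meaning directly rather than through a particular chart.

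One cosmetic point about your last paragraph. The containment you actually need is $\bar x+V+W\subseteq\sU$, and it holds automatically because the implicit-function neighborhoods lie in the domain of $G$; you do not need to shrink $\sU$ into $\bar x+V+W$. Your second paragraph already gives the correct argument, $\gamma(V)=\Phi^{-1}(0)\cap(\bar x+V+W)=\sM\cap(\bar x+V+W)$.
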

	
	Many sets in optimization are not manifolds, but their singularities often admit a structured decomposition into smooth manifolds, formalized by the notion of \emph{stratification}.
	
	\begin{definition}[$C^k$ stratification]\label{def:stratification}
		A \emph{$C^k$ stratification} $(k\ge1)$ of a set $\sQ\subset\RR^d$ is a partition of $\sQ$ into finitely many connected $C^k$ manifolds, called \emph{strata}, such that the closure of any stratum meets a second stratum only by containing it:
		\[
		\text{for all strata }\sX,\sY\subseteq \sQ:\qquad \sY\cap\cl\sX\ne\varnothing
		\quad\Longrightarrow\quad
		\sY\subseteq\cl\sX.
		\]
		The stratification is \emph{compatible} with a family of sets if every stratum is either contained in or disjoint from each member of the family.
	\end{definition}
	
	We focus on $C^2$ stratifications, henceforth called simply stratifications. To ensure their existence, we assume throughout that our objectives are definable in a fixed {\it o-minimal} structure. Such structures are closed under Boolean operations, products, and coordinate projections, while their definable subsets of $\RR$ are precisely finite unions of points and intervals, ruling out pathologies such as infinite oscillation. They include the semialgebraic, globally subanalytic, and $\RR_{\mathrm{an},\exp}$ structures~\citep{van1998tame,coste2000introduction}. A set is \emph{definable} if it belongs to the chosen structure, and a map is definable if its graph is definable. All examples considered in this paper are semialgebraic and hence definable.
	
	\begin{theorem}[Theorem~1.3 and Corollary~1.11 of~\citet{loi1997verdier}]\label{thm:definable-stratification}
		For every $k\ge1$ and every finite family $\{\sQ_1,\dots,\sQ_p\}$ of definable subsets of $\RR^d$, the union $\bigcup_{j=1}^p\sQ_j$ admits a $C^k$ stratification compatible with the family\footnote{\citet[Theorem 1.3]{loi1997verdier} shows the existence of $C^k$ stratification for all of $\RR^d$ compatible with the family $\{\sQ_1,\dots, \sQ_p\}$, which implies the existence of such stratification for the union $\bigcup_{j=1}^p\sQ_j$.}, in the sense of Definition~\ref{def:stratification}.
	\end{theorem}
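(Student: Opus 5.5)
The plan is to deduce the statement directly from the ambient version in \citet[Theorem~1.3]{loi1997verdier}, as the footnote indicates. That result gives, for the finite definable family $\{\sQ_1,\dots,\sQ_p\}$, a $C^k$ stratification $\sS$ of all of $\RR^d$ compatible with the family. I take its strata to be finitely many connected definable $C^k$ submanifolds satisfying the frontier condition of Definition~\ref{def:stratification}; Loi's Verdier stratifications have this property, and matching this convention is the one point to verify carefully. I then keep only the strata lying in $\sQ:=\bigcup_{j=1}^p\sQ_j$.

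\emph{Step 1 (the kept strata partition $\sQ$).} Compatibility says each stratum $\sX\in\sS$ is either contained in or disjoint from each $\sQ_j$. Hence $\sX$ is either contained in $\sQ$ (if it lies in some $\sQ_j$) or disjoint from $\sQ$ (if it misses every $\sQ_j$). Let $\sS_{\sQ}:=\{\sX\in\sS:\sX\subseteq\sQ\}$. Since $\sS$ partitions $\RR^d$, every point of $\sQ$ lies in exactly one stratum, and that stratum meets $\sQ$, so it belongs to $\sS_{\sQ}$. Thus $\sS_{\sQ}$ is a finite partition of $\sQ$ into connected $C^k$ manifolds.

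\emph{Step 2 (frontier condition and compatibility are inherited).} The frontier condition in Definition~\ref{def:stratification} is a statement about pairs of strata, with closures taken in $\RR^d$. It therefore holds for every pair in the subfamily $\sS_{\sQ}\subseteq\sS$. Compatibility of $\sS_{\sQ}$ with $\{\sQ_1,\dots,\sQ_p\}$ is also immediate, since it is a property of each individual stratum. Hence $\sS_{\sQ}$ is a $C^k$ stratification of $\sQ$ compatible with the family.

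\emph{Main obstacle.} The only real concern is whether Loi's strata are connected and satisfy exactly the frontier condition as stated, rather than only a Whitney/Verdier regularity condition. If connectedness is not part of his statement, I would split each stratum into its connected components. There are finitely many, because definable sets have finitely many definable connected components, and each component is open in its stratum and hence still a $C^k$ manifold. The frontier condition could fail after splitting. In that case I would re-apply Loi's theorem to the enlarged finite definable family consisting of the $\sQ_j$ together with all these components and their closures. I would then argue, via the standard definable-refinement argument in \citet{van1998tame}, that the resulting compatible stratification can be taken with connected strata satisfying the frontier condition. This yields the claim with $\bigcup_j\sQ_j$ in place of $\RR^d$ by Steps~1--2.
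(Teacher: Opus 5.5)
Your proposal is correct and matches the paper's footnote: take Loi's $C^k$ stratification of $\RR^d$ compatible with $\{\sQ_1,\dots,\sQ_p\}$, keep only the strata contained in $\bigcup_j\sQ_j$, and note that the partition, frontier, and compatibility properties pass to this subfamily. Your contingency paragraph is not needed, because Loi's notion of stratification already consists of finitely many connected definable $C^k$ submanifolds satisfying the frontier condition; you should drop it anyway, since re-applying the same theorem to an enlarged family would not by itself supply the connectedness it is meant to repair.
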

	
	We use the compatibility of this stratification in Section~\ref{sec:deterministic-criterion} to decompose an identifiable set into finitely many strata on which the objective is smooth.

	\subsection{Normal Cones and Subdifferentials}\label{subsec:variational-background}
	
	We next recall the variational objects used throughout; see~\cite{rockafellar2009variational,dontchev2022lectures} for background.
	
	Two normal cones to a closed set $\sQ\subset\RR^d$ at $\bar x\in \sQ$ underlie the subdifferentials below. The \emph{Fr\'echet normal cone} is defined as
	\[
	\widehat N_{\sQ}(\bar{x})
	:=\left\{v\in\RR^d:
	\limsup_{\substack{x\to\bar{x},\,x\in \sQ}}
	\frac{\inp{v}{x-\bar{x}}}{\|x-\bar{x}\|}\le 0\right\}.
	\]
	The \emph{limiting normal cone} enlarges it by including the limits of nearby Fr\'echet normals:
	\[
	N_{\sQ}(\bar{x})
	:=\left\{v:\exists\,x_\nu\to\bar{x},\ v_\nu\to v,
	\ v_\nu\in\widehat N_{\sQ}(x_\nu)\right\}.
	\]
	Indeed, $\widehat N_{\sQ}(\bar x)\subseteq N_{\sQ}(\bar x)$.
	Subgradients are defined through these cones via the epigraph. For lower semicontinuous $h:\RR^d\to\RR\cup\{+\infty\}$ and $\bar x\in\dom h$, the \emph{Fr\'echet} and \emph{limiting subdifferentials} are
	\[
	\widehat\partial h(\bar{x})
	:=\{v:(v,-1)\in \widehat N_{\epi h}(\bar{x},h(\bar{x}))\},
	\qquad
	\partial h(\bar{x})
	:=\{v:(v,-1)\in N_{\epi h}(\bar{x},h(\bar{x}))\},
	\]
	and the \emph{horizon subdifferential} is $\partial^\infty h(\bar{x}):=\{v:(v,0)\in N_{\epi h}(\bar{x},h(\bar{x}))\}$; we write $\gph\partial h$ for the graph of $\partial h$. For convex $h$ both subdifferentials reduce to the convex subdifferential, and for $C^1$ functions to the gradient. The horizon subdifferential records recession directions of the subdifferential graph and vanishes whenever $h$ is locally Lipschitz around $\bar x$.

	\begin{example}[Subdifferential of the $\ell_1$-norm]\label{ex:l1-subdifferential}
		Let $h(z)=\|z\|_1=\sum_{i=1}^m|z_i|$ on $\RR^m$. Since the coordinates separate, the subdifferential is
		\[
		\partial h(z)=\{y\in[-1,1]^m:y_i=\sign(z_i)\text{ for every }i\text{ with }z_i\ne0\},
		\]
		a face of the unit cube $[-1,1]^m$ whose dimension equals the number of free coordinates of $z$. In particular, $\partial h(0)=[-1,1]^m$ is the full cube, while $\partial h(z)=\{\sign(z)\}$ is a single vertex when $z$ has no zero entry. Since $h$ is Lipschitz, $\partial^\infty h(z)=\{0\}$ for all $z$. This cube-face structure reappears in Section~\ref{sec:low-rank-specialization}, where the multipliers at a critical point form a slice of the cube by an affine subspace.
	\end{example}
	
	A point $\bar x\in\dom h$ is a \emph{critical point} of $h$ if $0\in\partial h(\bar x)$, and a \emph{Fr\'echet-critical point} if $0\in\widehat\partial h(\bar x)$; the function is \emph{subdifferentially regular}
	at $\bar x$ if $\widehat\partial h(\bar x)=\partial h(\bar x)$. For locally Lipschitz functions, this is the standard Clarke regularity condition; in that case the Fr\'echet, limiting, and Clarke subdifferentials agree at $\bar x$.
	Regularity holds for every function class in this paper.
	A critical point $\bar x$ is a \emph{strict saddle point} of $h$ if, for some unit direction $w$, the standard second-order subderivative of $h$ at $\bar x$ for the subgradient $0$ satisfies $d^2 h(\bar x\mid 0)(w)<0$~\citep{rockafellar2009variational,davis2022proximal,bianchi2024stochastic}.

	\subsection{Identifiable Sets and Identifiable Manifolds}\label{subsec:identifiable}
	
	We adopt the definitions of~\citet{drusvyatskiy2012optimalityidentifiabilitysensitivity, drusvyatskiy2014optimality} for identifiable sets and manifolds.

	\begin{definition}[Identifiable set]
		Let $h:\RR^d\to\RR$ be locally Lipschitz, and let $(\bar{x},\bar{v})\in\gph\partial h$.  A set $\sM\subset\RR^d$ containing $\bar{x}$ is identifiable for $h$ at $\bar{x}$ for $\bar{v}$ if, for every sequence $x_\nu\to\bar{x}$ and every sequence $v_\nu\to\bar{v}$ with $v_\nu\in\partial h(x_\nu)$, all sufficiently large indices satisfy $x_\nu\in\sM$.
	\end{definition}
	
	\begin{definition}[Locally minimal identifiable set]
		An identifiable set $\sM$ for $h$ at $\bar{x}$ for $\bar{v}$ is locally minimal if, for every other identifiable set $\sN$ for the same pair, there exists a neighborhood $\sO$ of $\bar{x}$ such that
		\[
		\sM\cap\sO\subseteq \sN\cap\sO.
		\]
	\end{definition}
	
	\begin{definition}[$C^2$ identifiable manifold]\label{def:c2-identifiable-manifold}
		Let $h:\RR^d\to\RR$ be locally Lipschitz, and let $(\bar{x},\bar{v})\in\gph\partial h$.  A set $\sM\subset\RR^d$ containing $\bar{x}$ is a $C^2$ identifiable manifold for $h$ at $\bar{x}$ for $\bar{v}$ if it is identifiable for $h$ at $\bar{x}$ for $\bar{v}$ and, in a neighborhood of $\bar{x}$, the set $\sM$ is a $C^2$ manifold of $\RR^d$ on which $h$ restricts to a $C^2$ function.
	\end{definition}
	
	\begin{proposition}[Local minimality and local uniqueness, Proposition~8.2 of~\citet{drusvyatskiy2014optimality}] \label{prop:auto-minimality}
		Let $h:\RR^d\to\RR$ be lower-semicontinuous and subdifferentially regular at some critical point $\bar{x}$. If $\sM$ is a $C^2$-identifiable manifold for $h$ at $\bar{x}$ for $0$, then $\sM$ is automatically a locally minimal identifiable set at $\bar{x}$ for $0$.
	\end{proposition}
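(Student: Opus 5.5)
The plan is to argue by contradiction, using the defining property of identifiability. Suppose $\sN$ is another identifiable set for $h$ at $\bar x$ for $0$, and suppose no neighborhood $\sO$ gives $\sM\cap\sO\subseteq\sN\cap\sO$. Then there are points $x_\nu\in\sM\setminus\sN$ with $x_\nu\to\bar x$. If we can attach to each $x_\nu$ a subgradient $v_\nu\in\partial h(x_\nu)$ with $v_\nu\to0$, identifiability of $\sN$ forces $x_\nu\in\sN$ for all large $\nu$, which is a contradiction. So the proposition reduces to one key lemma:
\[
\lim_{x\to\bar x,\ x\in\sM}\dist\bigl(0,\partial h(x)\bigr)=0,
\]
that is, every point of $\sM$ near $\bar x$ carries a nearly critical subgradient.

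\textbf{Step 1: $\bar x$ is critical for the restriction.} Let $g$ be a $C^2$ extension of $h|_{\sM}$ near $\bar x$. Regularity gives $0\in\partial h(\bar x)=\widehat\partial h(\bar x)$, so $h(y)\ge h(\bar x)-o(\|y-\bar x\|)$. Restricting to $y\in\sM$ and using the chart $\gamma$ of Lemma~\ref{lem:chart-second-order} shows that $\nabla_{\sM}h(\bar x)=P_{T_{\bar x}\sM}\nabla g(\bar x)=0$. By continuity, $\nabla_{\sM}h(x)\to0$ as $x\to\bar x$ along $\sM$. Thus the tangential part of any candidate subgradient is automatically small, and what remains is to control its normal part.

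\textbf{Step 2: produce small subgradients at points of $\sM$.} This is the main obstacle. A Lipschitz $h$ that is smooth on $\sM$ need not have subgradients near $0$ at a given point of $\sM$. The plan is to exploit identifiability of $\sM$ itself through a localized variational argument. Fix $x\in\sM$ close to $\bar x$ and set $\delta:=\|x-\bar x\|$. Consider
\[
\psi_x(y):=h(y)-\inp{\nabla g(x)}{y}+\tfrac{1}{2}\eta\|y-x\|^2
\]
on a small closed ball around $x$, with $\eta$ chosen large relative to $\delta$. The steps are:
\begin{enumerate}[label=(\roman*)]
\item Take a minimizer $y_x$ of $\psi_x$, or an Ekeland $\varepsilon$-minimizer.
\item Using the Lipschitz constant of $h$ and the local lower bound from Step~1, show that $y_x$ lies in the interior of the ball and that $\|y_x-x\|\to0$.
\item Fermat's rule then gives $w_x:=\nabla g(x)-\eta(y_x-x)\in\partial h(y_x)$, and $w_x\to0$ provided $\eta\|y_x-x\|\to0$.
\item Identifiability of $\sM$, applied to the pairs $(y_x,w_x)$, forces $y_x\in\sM$.
\item On $\sM$ we have $h=g$. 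Comparing $\psi_x(y_x)\le\psi_x(x)$ with the second-order expansion of $g$ along $\sM$ then shows $y_x=x$ once $\eta$ dominates the curvature of $g\circ\gamma$.
\end{enumerate}
If all of this goes through, $w_x\in\partial h(x)$ with $w_x\to0$, which is the key lemma.

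\textbf{Expected difficulty.} The delicate part is balancing the parameters in (ii)--(v): $\eta$ must be large enough to keep $y_x$ near $x$ and to pin it to $x$ within $\sM$, yet $\eta\|y_x-x\|$ must still vanish. This bookkeeping is where I expect the real work to lie. If the direct choice fails, my fallback is to choose the anchor point differently. Instead of fixing $x$, I would run the same minimization centered at $\bar x$ with linear tilts $\inp{u}{\cdot}$ for small $u$. Identifiability keeps the minimizers on $\sM$, and there the tilted problem is a $C^2$ problem on a manifold whose critical points sweep out a neighborhood of $\bar x$ in $\sM$ as $u$ varies in the tangent directions. This is the same mechanism behind the local uniqueness statement, and it would yield the key lemma and hence the proposition.
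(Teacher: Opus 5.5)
The paper gives no proof of this proposition; it imports it from Drusvyatskiy--Lewis. Your reduction is the standard one: an identifiable set along which $\dist(0,\partial h(x))\to0$ is automatically locally minimal. Step~1 is also correct. All the content sits in the key lemma, and Step~2 does not yet establish it.

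\textbf{A step that fails as written.} For an arbitrary $C^2$ extension $g$ of $h|_{\sM}$, Step~1 gives only $P_{T_{\bar x}\sM}\nabla g(\bar x)=0$. So $w_x=\nabla g(x)-\eta(y_x-x)$ tends at best to $P_{N_{\bar x}\sM}\nabla g(\bar x)$, which need not vanish, and in (iv) identifiability for the subgradient $0$ cannot be invoked. This is repairable. Let $\Phi$ be a local $C^2$ defining map, and choose $\lambda$ with $D\Phi(\bar x)^*\lambda=P_{N_{\bar x}\sM}\nabla g(\bar x)$; this is possible because $\Im D\Phi(\bar x)^*=N_{\bar x}\sM$. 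Replace $g$ by $g-\inp{\lambda}{\Phi}$. This still extends $h|_{\sM}$ and now $\nabla g(\bar x)=0$. With $K$ a bound on $\|\nabla^2 g\|$ near $\bar x$ and $\delta:=\|x-\bar x\|$, you get $\|\nabla g(x)\|\le K\delta$ and $g(x)\le h(\bar x)+K\delta^2/2$.

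\textbf{The estimate you left open.} The balancing is the decisive step, and it cannot be closed with $\eta$ fixed. The only lower bound off $\sM$ is the Fr\'echet one, $h(y)\ge h(\bar x)-\eps(\|y-\bar x\|)\|y-\bar x\|$, with $\eps$ nondecreasing and $\eps(t)\to0$ but in general not $O(t)$. For fixed $\eta$ the minimizer may therefore sit on the boundary sphere, where Fermat's rule says nothing. It closes once $\eta$ is tied to $\eps$. Take $r=\delta$ and $s:=\|y_x-x\|\le\delta$. Then $\psi_x(y_x)\le\psi_x(x)=g(x)$ gives
\[
\tfrac{\eta}{2}s^2\le \tfrac{K}{2}\delta^2+2\delta\,\eps(2\delta)+K\delta s\le\delta\theta,
\qquad \theta:=2\eps(2\delta)+\tfrac{3K}{2}\delta\to0 .
\]
Set $\eta:=\max\{K+1,\,8\theta/\delta\}$. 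This yields $s\le\delta/2$, so the minimizer is interior. It also yields
\[
\eta s\le\sqrt{2\eta\delta\theta}\le\max\{\sqrt{2(K+1)\delta\theta},\,4\theta\}\to0 .
\]
Steps (iii)--(v) then run as you planned:
\begin{itemize}
\item Fermat's rule gives $\nabla g(x)-\eta(y_x-x)\in\widehat\partial h(y_x)$, with norm at most $K\delta+\eta s\to0$.
\item Identifiability, applied along sequences (using $h(y_x)\to h(\bar x)$ for attentiveness in the lsc case), puts $y_x\in\sM$ for small $\delta$.
\item The ambient Taylor bound $g(y)\ge g(x)+\inp{\nabla g(x)}{y-x}-\tfrac K2\|y-x\|^2$ turns $\psi_x(y_x)\le\psi_x(x)$ into $(\eta-K)s^2\le0$.
\end{itemize}
Hence $y_x=x$ and $\nabla g(x)\in\widehat\partial h(x)$ with $\|\nabla g(x)\|\le K\delta$. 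Your route therefore works, but only with $\eta\to\infty$ at a rate set by the Fr\'echet modulus at $\bar x$. The tilted fallback is unnecessary, and its claim that critical points ``sweep out'' a neighborhood of $\bar x$ in $\sM$ would need an inverse-function argument you did not supply.
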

	
	This proposition is particularly useful in our analysis: any identifiable manifold, upon existence, must be determined \textit{uniquely} by the locally minimal identifiable set.
	
	\begin{definition}[Inactive critical point]\label{def:inactive-critical-point}
		Let $0\in\partial h(\bar{x})$.  We say that $\bar{x}$ is inactive if no $C^2$ identifiable manifold exists for $h$ at $\bar{x}$ for $0$.  
	\end{definition}
	
	We next present a useful property of inactive critical points, the proof of which is presented in Appendix~\ref{app:orthogonal-invariance}.
	
	\begin{lemma}[Orthogonal invariance of criticality and identifiability]
		\label{lem:orthogonal-invariance}
		Let $\sX$ be a finite-dimensional Euclidean space, let $h:\sX\to\RR$ be locally Lipschitz, and let $\mathcal{T}:\sX\to\sX$ be an orthogonal linear transformation satisfying $h\circ\mathcal{T}=h$.  Then, for every $(x,v)\in\gph\partial h$, $\partial h(\mathcal{T}x)=\mathcal{T}\bigl(\partial h(x)\bigr)$. Moreover, a set $\sM\subset\sX$ is identifiable for $h$ at $x$ for
		$v$ if and only if $\mathcal{T}\sM$ is identifiable for $h$ at $\mathcal{T}x$ for $\mathcal{T}v$. The same equivalence holds for
		$C^2$ identifiable manifolds. Consequently, $x$ is an inactive critical point if and only if $\mathcal{T}x$ is an inactive critical point.
	\end{lemma}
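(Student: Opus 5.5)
The plan is to show first that the limiting subdifferential transforms covariantly under $\mathcal{T}$, and then to transport every part of the definitions through $\mathcal{T}$, which is a linear isometry. For the subdifferential identity we work at the level of the Fréchet subdifferential. Write $\mathcal{T}^*=\mathcal{T}^{-1}$ and use $h=h\circ\mathcal{T}^{-1}$.

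Suppose $w\in\widehat\partial h(x)$, meaning
\[
h(y)\ge h(x)+\inp{w}{y-x}+o(\|y-x\|).
\]
Substitute $y=\mathcal{T}^{-1}z$. Since $\inp{w}{\mathcal{T}^{-1}(z-\mathcal{T}x)}=\inp{\mathcal{T}w}{z-\mathcal{T}x}$ and $\|\mathcal{T}^{-1}(z-\mathcal{T}x)\|=\|z-\mathcal{T}x\|$, this becomes
\[
h(z)\ge h(\mathcal{T}x)+\inp{\mathcal{T}w}{z-\mathcal{T}x}+o(\|z-\mathcal{T}x\|).
\]
Hence $\mathcal{T}\widehat\partial h(x)\subseteq\widehat\partial h(\mathcal{T}x)$. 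Here I use the standard analytic characterization of the Fréchet subdifferential, which for a locally Lipschitz $h$ is equivalent to the epigraphical one. Applying the same argument to $\mathcal{T}^{-1}$, which also preserves $h$, gives equality.

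Limiting subgradients are limits of Fréchet subgradients at points $x_\nu\to x$ with $h(x_\nu)\to h(x)$. Since $\mathcal{T}$ is a homeomorphism preserving $h$, these approximating sequences correspond bijectively. This yields $\partial h(\mathcal{T}x)=\mathcal{T}\partial h(x)$.

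\textbf{Identifiability.} Assume $\sM$ is identifiable at $x$ for $v$, and take $z_\nu\to\mathcal{T}x$, $u_\nu\to\mathcal{T}v$ with $u_\nu\in\partial h(z_\nu)$. Set $x_\nu=\mathcal{T}^{-1}z_\nu$ and $v_\nu=\mathcal{T}^{-1}u_\nu$. By the first part $v_\nu\in\partial h(x_\nu)$, and by continuity of $\mathcal{T}^{-1}$ we have $x_\nu\to x$ and $v_\nu\to v$. So $x_\nu\in\sM$ eventually, i.e.\ $z_\nu\in\mathcal{T}\sM$ eventually. The converse follows by applying this to $\mathcal{T}^{-1}$.

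\textbf{$C^2$ manifolds.} If $\Phi$ is a $C^2$ local defining map for $\sM$ near $x$ with surjective derivative, then $\Phi\circ\mathcal{T}^{-1}$ is a $C^2$ defining map for $\mathcal{T}\sM$ near $\mathcal{T}x$, and its derivative $D\Phi\circ\mathcal{T}^{-1}$ is still surjective. If $h|_{\sM}$ is $C^2$, take a $C^2$ function $\widetilde h$ agreeing with $h$ on $\sM$ near $x$. Then $\widetilde h\circ\mathcal{T}^{-1}$ agrees with $h\circ\mathcal{T}^{-1}=h$ on $\mathcal{T}\sM$, so $h|_{\mathcal{T}\sM}$ is $C^2$.

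\textbf{Inactivity.} Specialize to $v=0$, noting $\mathcal{T}0=0$. By the first part, $0\in\partial h(x)$ if and only if $0\in\partial h(\mathcal{T}x)$. The map $\sM\mapsto\mathcal{T}\sM$ is a bijection between $C^2$ identifiable manifolds at $x$ and at $\mathcal{T}x$, so one set is empty exactly when the other is.

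\textbf{Main obstacle.} The only delicate point is justifying the subdifferential transformation rule from the epigraphical definition. I would handle it by noting that $(y,r)\mapsto(\mathcal{T}y,r)$ is an orthogonal map of $\sX\times\RR$ carrying $\epi h$ onto itself. Orthogonal maps send Fréchet and limiting normal cones to the corresponding normal cones at the image point; this is immediate from the definitions, since inner products and norms are preserved. Reading off the last coordinate $-1$ then gives the identity directly and avoids the analytic characterization altogether.
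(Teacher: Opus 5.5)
Your proposal is correct and matches the paper's proof. The paper obtains $\partial h(\mathcal{T}x)=\mathcal{T}\partial h(x)$ through exactly the lifted orthogonal map $(y,r)\mapsto(\mathcal{T}y,r)$ preserving $\epi h$ that you describe in your last paragraph. It then transports identifiability via sequences, the defining map $\Phi\circ\mathcal{T}^{-1}$, and $C^2$ smoothness of the restriction just as you do; it writes $h|_{\mathcal{T}\sM}=(h|_{\sM})\circ(\mathcal{T}|_{\sM})^{-1}$ instead of composing a $C^2$ extension with $\mathcal{T}^{-1}$, an inessential difference.
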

	
	We now specialize to finite-max composite objectives, a broad class for which the locally minimal identifiable set admits an explicit description.
	
	\subsection{Finite-Max Composite Functions}\label{subsec:problem-formulation}
	In this paper, we study \textit{finite-max composite} objectives
	\[
	f(x)=g(F(x)),
	\qquad
	g(z):=\max_{i\in\sI}g_i(z),
	\]
	where $\sI$ is a finite index set, $F:\RR^d\to\RR^n$ is $C^2$ and definable, and each $g_i:\RR^n\to\RR$ is $C^2$ and definable. We write $f_i := g_i\circ F$ and call each $f_i$ a smooth \emph{piece} of $f$ and each $g_i$ a smooth \emph{piece} of $g$. The nonsmoothness of $f$ comes entirely from the outer pointwise maximum.
	Fix a critical point $\bar x$ of $f$ and set $\bar z := F(\bar x)$. 
	For $z$ near $\bar z$, the \emph{active set} is defined as
	\[
	I(z) := \{i\in\sI : g_i(z) = g(z)\}.
	\]
	Since $g$ is a finite maximum of $C^2$ functions, it is locally Lipschitz and subdifferentially regular, with
	\[
	\partial g(\bar z)=\conv\{\nabla g_i(\bar z):i\in  I(\bar z)\}
	\qquad\text{and}\qquad
	\partial^\infty g(\bar z)=\{0\}.
	\]
	The two results below characterize the locally minimal identifiable set for two building blocks of finite-max composite functions: finite-max of $C^1$ functions (Proposition~\ref{prop:finite-max-outer-identifiable}) and composite functions (Proposition~\ref{prop:chain-rule}). 
	
	\begin{proposition}[Identifiable set for a finite maximum, Theorem~7.3 of~\citet{lewis2025identifiability}]\label{prop:finite-max-outer-identifiable}
		Let $g_i:\RR^n\to \RR$ be $C^1$ for each $i\in\sI$, where $\sI$ is a finite index set, and define $g(z):=\max_{i\in\sI}g_i(z)$. For $\bar{z}\in\dom g$ and $\bar{v}\in\partial g(\bar{z})$, define the set of multipliers
		\begin{equation}\nonumber
			\Lambda_{\bar{v}} := \left\{\lambda\in \Delta_{\sI}: \bar{v} = \sum_{i\in I(\bar z)} \lambda_i \nabla g_i(\bar{z}), \supp \lambda \subseteq  I(\bar z)\right\}.
		\end{equation}
		Then
		\begin{equation}\nonumber
			\sM_{\bar{v}}:= \bigcup_{\lambda\in \Lambda_{\bar{v}}} \left\{z\in \RR^n: \supp \lambda \subseteq I(z)\right\}
		\end{equation}
		is the locally minimal identifiable set for $g$ at $\bar z$ for $\bar{v}$.
	\end{proposition}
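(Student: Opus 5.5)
We prove two things: that $\sM_{\bar v}$ is identifiable for $g$ at $\bar z$ for $\bar v$, and that it is contained, near $\bar z$, in every other identifiable set. Throughout we use that near $\bar z$ only indices in $I(\bar z)$ can be active. By continuity, $I(z)\subseteq I(\bar z)$ for all $z$ in a neighborhood $\sO$ of $\bar z$. For $z\in\sO$, regularity of a finite max of $C^1$ functions gives $\partial g(z)=\conv\{\nabla g_i(z):i\in I(z)\}$.

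\textbf{Identifiability.} Take $z_\nu\to\bar z$ and $v_\nu\in\partial g(z_\nu)$ with $v_\nu\to\bar v$. Write $v_\nu=\sum_{i\in I(z_\nu)}\lambda^\nu_i\nabla g_i(z_\nu)$ with $\lambda^\nu\in\Delta_{\sI}$ and $\supp\lambda^\nu\subseteq I(z_\nu)$. Suppose, for contradiction, that infinitely many $z_\nu$ lie outside $\sM_{\bar v}$, and pass to that subsequence. Since $\sI$ is finite, we may pass to a further subsequence along which $I(z_\nu)\equiv J$ is constant. The simplex is compact, so $\lambda^\nu\to\lambda$ along a further subsequence, with $\supp\lambda\subseteq J\subseteq I(\bar z)$. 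Continuity of $\nabla g_i$ then gives $\bar v=\sum_i\lambda_i\nabla g_i(\bar z)$, so $\lambda\in\Lambda_{\bar v}$. Because $\supp\lambda\subseteq J=I(z_\nu)$, each $z_\nu$ belongs to $\{z:\supp\lambda\subseteq I(z)\}\subseteq\sM_{\bar v}$. This contradicts the choice of the subsequence, so $\sM_{\bar v}$ is identifiable.

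\textbf{Local minimality.} Let $\sN$ be any identifiable set. Suppose there are points $z_\nu\to\bar z$ with $z_\nu\in\sM_{\bar v}\setminus\sN$. Each $z_\nu$ has an associated $\lambda^\nu\in\Lambda_{\bar v}$ with $\supp\lambda^\nu\subseteq I(z_\nu)$. Set $v_\nu:=\sum_i\lambda^\nu_i\nabla g_i(z_\nu)$; then $v_\nu\in\partial g(z_\nu)$. We estimate
\[
\|v_\nu-\bar v\|=\Bigl\|\sum_i\lambda^\nu_i\bigl(\nabla g_i(z_\nu)-\nabla g_i(\bar z)\bigr)\Bigr\|\le\max_{i\in I(\bar z)}\|\nabla g_i(z_\nu)-\nabla g_i(\bar z)\|\to0.
\]
Identifiability of $\sN$ then forces $z_\nu\in\sN$ for large $\nu$, a contradiction. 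Hence there is a neighborhood $\sO'$ of $\bar z$ with $\sM_{\bar v}\cap\sO'\subseteq\sN\cap\sO'$.

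The only delicate points are two. First, we must restrict to a neighborhood where $I(z)\subseteq I(\bar z)$, so that the multipliers have support inside $I(\bar z)$ and land in $\Lambda_{\bar v}$. Second, in the minimality step we need uniformity over the possibly varying $\lambda^\nu$. The uniform bound above handles this, since convex weights cannot amplify the gradient errors.
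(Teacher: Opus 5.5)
Your proof is correct, with one small omission noted at the end. The paper does not prove this statement: it imports it as Theorem~7.3 of the cited reference, so there is no in-paper argument to compare against.

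Your route is direct and self-contained. For identifiability you use two facts. Finiteness of $\sI$ lets you freeze the active set $I(z_\nu)\equiv J\subseteq I(\bar z)$ along a subsequence. Compactness of $\Delta_{\sI}$ then extracts a limiting multiplier $\lambda\in\Lambda_{\bar v}$ with $\supp\lambda\subseteq J$, and this certifies $z_\nu\in\sM_{\bar v}$.

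For local minimality, the key move is to reuse at each $z_\nu\in\sM_{\bar v}$ the very multiplier $\lambda^\nu\in\Lambda_{\bar v}$ that certifies membership. Then $v_\nu-\bar v$ contains only gradient errors, and the convex weights give a bound uniform in $\lambda^\nu$. In this step the restriction $I(z)\subseteq I(\bar z)$ is not even needed, since $\supp\lambda^\nu\subseteq I(\bar z)$ is built into $\Lambda_{\bar v}$.

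The only external input is the standard formula $\partial g(z)=\conv\{\nabla g_i(z):i\in I(z)\}$ for finite maxima of $C^1$ functions, valid at every $z$. So your argument needs no further machinery. What the citation buys the paper is brevity, and placement of the result within the general identifiability calculus it draws on elsewhere, such as the chain rule of Proposition~\ref{prop:chain-rule}.

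The omission: the definition of an identifiable set requires the set to contain the base point, so you should verify $\bar z\in\sM_{\bar v}$. This is immediate. Since $\bar v\in\partial g(\bar z)=\conv\{\nabla g_i(\bar z):i\in I(\bar z)\}$, the set $\Lambda_{\bar v}$ is nonempty, and every $\lambda\in\Lambda_{\bar v}$ has $\supp\lambda\subseteq I(\bar z)$.
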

	
	\begin{example}[Identifiable set of the $\ell_1$-norm]\label{ex:l1-outer}
		Take $g=\|\cdot\|_1$ on $\RR^m$. Its pieces are the affine maps $g_\sigma(z)=\inp{\sigma}{z}$ over $\sigma\in\{\pm1\}^m=:\sI$, with $\nabla g_\sigma = \sigma$. Moreover, $\sigma$ is an element of the active set $I(z)$ exactly when $\sigma_i=\sign(z_i)$ on every coordinate with $z_i\ne0$. Fix a subgradient $\bar v\in\partial g(\bar z)$. By Proposition~\ref{prop:finite-max-outer-identifiable}, the locally minimal identifiable set for $g$ at $\bar z$ for $\bar v$ is
		\begin{equation}\label{eq:l1-outer-union}
			\sM_{\bar v}:=\bigcup_{\lambda\in\Lambda_{\bar v}}\{z\in\RR^m:\supp\lambda\subseteq I(z)\},
			\qquad
			\Lambda_{\bar v}=\Bigl\{\lambda\in\Delta_{\sI}:\bar v=\textstyle\sum_\sigma\lambda_\sigma\sigma\Bigr\},
		\end{equation}
		a union, over the representations of $\bar v$ as a convex combination of sign patterns, of the points $z$ at which every pattern in $\supp\lambda$ is simultaneously active.\footnote{The support condition $\supp\lambda\subseteq I(\bar z)$ of Proposition~\ref{prop:finite-max-outer-identifiable} is automatic here: a sign pattern disagreeing with $\bar v$ on a coordinate where $|\bar v_i|=1$ cannot appear in any representation of $\bar v$, while a coordinate where $|\bar v_i|<1$ satisfies $\bar z_i=0$ and therefore imposes no constraint on the support.} In Appendix~\ref{app::example-l1}, we show that this identifiable set reduces to:
		\begin{equation}\label{eq:Sy-section5}
			\mathcal{S}_{\bar v}:=\{z\in\RR^m:z_i=0\text{ if }|\bar v_i|<1,\ z_i\ge0\text{ if }\bar v_i=1,\ z_i\le0\text{ if }\bar v_i=-1\},
		\end{equation}
		which we call the {\it sign cone} of $\bar v$ henceforth.
		Equivalently, $\mathcal{S}_{\bar v}$ is the normal cone $N_{[-1,1]^m}(\bar v)$ to the cube $[-1,1]^m$ at $\bar v$. Call a coordinate $i$ \emph{saturated} for $\bar v$ when $|\bar v_i|=1$ and \emph{free} when $|\bar v_i|<1$. Then $\dim \mathcal{S}_{\bar v}$ is the number of saturated coordinates, complementary to the dimension of the cube face containing $\bar v$. For example, if $\bar v$ is a vertex of the cube, $\mathcal{S}_{\bar v}$ is a full-dimensional orthant; if $\bar v$ is the center, $\mathcal{S}_{\bar v}=\{0\}$. Figure~\ref{fig:ex_Sv} illustrates an example.
	\end{example}

	\begin{figure}[H]
		\centering
		\includegraphics{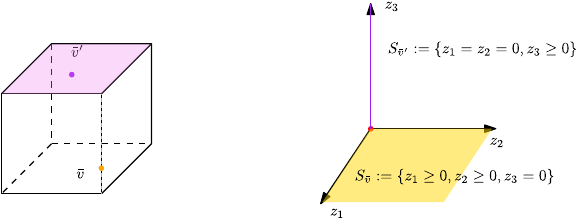}
		\caption{The locally minimal identifiable set for $g=\|\cdot\|_1$ on $\RR^3$. \emph{Left:} two subgradients $\bar v, \bar v'\in [-1,1]^3 = \partial g(0)$, where $\bar{v}$ lies on the front-right edge of the cube (two saturated coordinates) and $\bar{v}'$ lies on the top face (one saturated coordinate). \emph{Right:} the corresponding identifiable sets $\mathcal{S}_{\bar v}$ and $\mathcal{S}_{\bar v'}$, which are the normal cones to the cube at $\bar v$ and $\bar v'$, respectively. The edge point $\bar v$ (orange) gives the two-dimensional cone $\{z\in\RR^3:z_1\geq0,\,z_2\geq0,\,z_3=0\}$; the face point $\bar v'$ (purple) gives the ray $\{z\in\RR^3:z_1=z_2=0,\,z_3\geq0\}$.
        }
		\label{fig:ex_Sv}
	\end{figure}

	\begin{proposition}[Chain rule, Proposition 5.1 of~\citet{drusvyatskiy2012optimalityidentifiabilitysensitivity}]\label{prop:chain-rule}
		Consider a function $f = g\circ F$ defined on an open neighborhood $\sV\subseteq \RR^d$, where $F:\RR^d\to\RR^n$ is $C^1$ and $g:\RR^n\to\RR$ is lower-semicontinuous. Fix a critical point $\bar{x}\in \dom f$ with $\bar{z}:=F(\bar{x})$. Suppose the following qualification condition holds at $\bar{x}$
		\[
		\ker D F(\bar{x})^* \cap \partial^\infty g(F(\bar{x})) = \{0\},
		\]
		and hence the inclusion $\partial f(\bar{x}) \subseteq D F(\bar{x})^* (\partial g(\bar{z}))$ holds. Define the set of multipliers
		\[
		\Lambda:=\{v\in \partial g(\bar{z}): D F(\bar{x})^*(v)=0\} = \partial g(\bar{z}) \cap \ker D F(\bar{x})^*.
		\]
		Suppose 
		\begin{itemize}
			\item $g$ is Clarke regular at all points in $\dom g$ around $F(\bar{x})$;
			\item for each $v\in \Lambda$, there exists a locally minimal identifiable set $\sM_v$ for $g$ at $\bar{z}$ for $v$;
			\item the collection $\{\sM_v\}_{v\in \Lambda}$ is finite.
		\end{itemize}
		Then, the set 
		\[
		\sM:=\bigcup_{v\in \Lambda} F^{-1}(\sM_v)
		\]
		is locally minimal identifiable for $f$ at $\bar{x}$ for $0\in \partial f(\bar{x})$.
	\end{proposition}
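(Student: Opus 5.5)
The proof has two parts: $\sM$ is identifiable, and it is contained near $\bar x$ in every other identifiable set. The qualification condition handles the first part, and Clarke regularity of $g$ handles the second. Throughout, set $z:=F(x)$.

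\emph{Step 1: the chain rule holds near $\bar x$.} First I check that the qualification condition persists at points near $\bar x$. Suppose $x_\nu\to\bar x$ and $0\ne w_\nu\in\ker DF(x_\nu)^*\cap\partial^\infty g(F(x_\nu))$. Normalize $w_\nu$ to unit length and pass to a limit. Since $\gph\partial^\infty g$ is closed and $DF$ is continuous, the limit is a unit vector in $\ker DF(\bar x)^*\cap\partial^\infty g(\bar z)$, which is impossible. Hence the qualification condition holds near $\bar x$, and the standard limiting chain rule \citep{rockafellar2009variational} gives
\[
\partial f(x)\subseteq DF(x)^*\partial g(F(x))\quad\text{for all }x\text{ near }\bar x.
\]
In the other direction, Clarke regularity of $g$ near $\bar z$ together with $F\in C^1$ gives
\[
DF(x)^*\partial g(F(x))=DF(x)^*\widehat\partial g(F(x))\subseteq\widehat\partial f(x)\subseteq\partial f(x).
\]
So the chain rule holds with equality near $\bar x$.

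\emph{Step 2: $\sM$ is identifiable.} Take $x_\nu\to\bar x$ and $u_\nu\in\partial f(x_\nu)$ with $u_\nu\to0$. Suppose, for contradiction, that infinitely many $x_\nu\notin\sM$, and pass to that subsequence. By Step 1 we can write $u_\nu=DF(x_\nu)^*w_\nu$ with $w_\nu\in\partial g(z_\nu)$.

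The $w_\nu$ are bounded. Otherwise, normalizing them would produce a nonzero vector in $\partial^\infty g(\bar z)\cap\ker DF(\bar x)^*$, contradicting the qualification condition.

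Pass to a further subsequence with $w_\nu\to w$. Outer semicontinuity of $\partial g$ gives $w\in\partial g(\bar z)$, and $DF(\bar x)^*w=\lim u_\nu=0$. Hence $w\in\Lambda$. Since $\sM_w$ is identifiable for $g$ at $\bar z$ for $w$, we get $z_\nu\in\sM_w$ for all large $\nu$. Then $x_\nu\in F^{-1}(\sM_w)\subseteq\sM$, which is a contradiction.

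\emph{Step 3: local minimality.} Let $\sN$ be identifiable for $f$ at $\bar x$ for $0$. Because $\{\sM_v\}_{v\in\Lambda}$ is finite, it suffices to show, for each fixed $v\in\Lambda$, that $F^{-1}(\sM_v)\cap\sO_v\subseteq\sN$ for some neighborhood $\sO_v$ of $\bar x$. The intersection of these finitely many neighborhoods then works for $\sM$.

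Suppose this fails for some $v$. Then there are $x_k\to\bar x$ with $x_k\in F^{-1}(\sM_v)\setminus\sN$, so $z_k\in\sM_v$ and $z_k\to\bar z$.

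\emph{The main obstacle} is to produce subgradients $w_k\in\partial g(z_k)$ with $w_k\to v$. I would use local minimality of $\sM_v$ as follows. For each $\eps>0$, the set $\{z:\partial g(z)\cap B_\eps(v)\ne\emptyset\}$ is identifiable for $g$ at $\bar z$ for $v$ directly from the definition. Local minimality of $\sM_v$ then yields a radius $\delta(\eps)>0$ such that every $z\in\sM_v$ with $\|z-\bar z\|<\delta(\eps)$ admits a subgradient within $\eps$ of $v$.

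Take $\eps_j\downarrow0$. A diagonal choice of $\eps_{j(k)}\to0$ with $\|z_k-\bar z\|<\delta(\eps_{j(k)})$ gives $w_k\in\partial g(z_k)$ with $w_k\to v$.

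By the reverse inclusion in Step 1, $u_k:=DF(x_k)^*w_k\in\partial f(x_k)$, and $u_k\to DF(\bar x)^*v=0$. Identifiability of $\sN$ then forces $x_k\in\sN$ for all large $k$, which is a contradiction.
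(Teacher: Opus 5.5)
Your proof is correct and is essentially the argument behind the cited Drusvyatskiy--Lewis result, which the paper imports without proof. Identifiability comes from the chain-rule inclusion together with boundedness of the multipliers under the qualification condition. Local minimality comes from showing that points of $\sM_v$ near $\bar z$ carry subgradients converging to $v$ (you rederive this liminf characterization through the identifiable sets $\{z:\partial g(z)\cap B_\eps(v)\neq\emptyset\}$), and then pushing these through $DF(x)^*$ via Clarke regularity.

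The one piece of bookkeeping you gloss over concerns a merely lsc $g$. There, every sequence must be $f$-attentive ($f(x_\nu)\to f(\bar x)$), both for outer semicontinuity of $\partial g$ and $\partial^\infty g$ and in Step 3, where you should add $|g(z)-g(\bar z)|<\eps$ to your sets. This is automatic in the paper's locally Lipschitz setting.
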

	
	\begin{example}[Identifiable set of an $\ell_1$-composite]\label{ex:l1-lmi}
		Take $g=\|\cdot\|_1$ and $f=\|F(\cdot)\|_1$ for a $C^2$ map $F:\RR^d\to\RR^m$ with critical point $\bar x$ and $\bar z:=F(\bar x)$. The chain rule of Proposition~\ref{prop:chain-rule} applies: the qualification condition holds because $\partial^\infty g(\bar z)=\{0\}$, the norm is convex and hence Clarke regular, and the identifiable set of $g$ is the cone $\mathcal{S}_v$ defined in~\eqref{eq:Sy-section5}, taking finitely many values, one per sign pattern of $v$. Its relevant multipliers are
		\[
		\Lambda=\partial g(\bar z)\cap\ker DF(\bar x)^*
		=\bigl\{v\in[-1,1]^m:v_i=\sign(\bar z_i)\text{ for }\bar z_i\ne0,\ DF(\bar x)^*v=0\bigr\}.
		\]
		Since the preimage commutes with unions, the locally minimal identifiable set is
		\begin{equation}\label{eq:l1-E}
			\sM=\bigcup_{v\in\Lambda}F^{-1}(\mathcal{S}_v)=F^{-1}(\sE),
			\qquad
			\sE:=\bigcup_{v\in\Lambda}\mathcal{S}_v\subset\RR^m,
		\end{equation}
		where $\mathcal{S}_v$ is the sign cone of $v$ defined in Example~\ref{ex:l1-outer}.
		The set $\sE$ is a union of normal cones $\mathcal{S}_v = N_{[-1,1]^m}(v)$ to the cube, taken over the multiplier slice $\Lambda = \partial g(\bar z)\cap\ker DF(\bar x)^*$. Therefore, its geometry is determined by how the subspace $\ker DF(\bar x)^*$ intersects the cube. Figure~\ref{fig:ex_polyhedral_fan_hexagon} illustrates an example: depending on the orientation of the slice $\Lambda$, the geometry of the set $\sE$ can differ drastically. Whether its pullback via $F^{-1}(\sE)$ is a $C^2$ smooth manifold is precisely the question addressed by the branching criterion of Section~\ref{sec:deterministic-criterion}.

		\begin{figure}[htbp]
			\centering
			\includegraphics{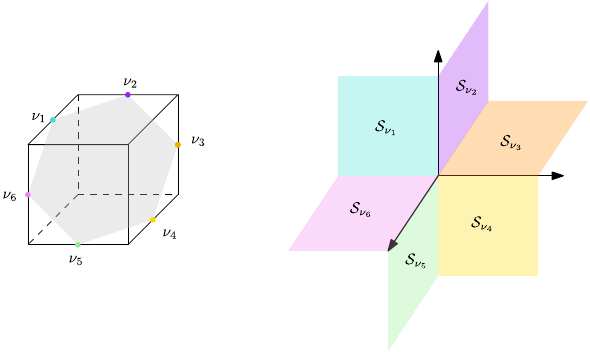}
			\caption{The identifiable set $\sE$ for a tilted slice, with $m=3$ and $\dim\Lambda=2$. \emph{Left:} the slice $\Lambda$ is a hexagon (gray), with six vertices $\nu_1,\dots,\nu_6$ (colored). \emph{Right:} the corresponding fan $\sE=\bigcup_{\nu\in\operatorname{Vert}(\Lambda)}\mathcal{S}_\nu$. The cone $\mathcal{S}_\nu=N_{[-1,1]^3}(\nu)$ for each vertex $\nu$ is a two-dimensional sector in the matching color, meeting along the rays where adjacent vertices share a saturated coordinate. The set $\sE$ is a nonconvex and nonsmooth polyhedral fan.}
			\label{fig:ex_polyhedral_fan_hexagon}
		\end{figure}
	\end{example}

	Together, these two propositions give a clean expression for the locally minimal identifiable set of the finite-max composite problem of Section~\ref{subsec:problem-formulation}. Proposition~\ref{prop:finite-max-outer-identifiable} considers the outer maximum: the identifiable set of $g$ at a subgradient $\bar v$ is the union, over multipliers $\lambda$ representing $\bar v$, of the regions where the pieces $g_i$ of $g$
	in $\supp\lambda$ stay active. Proposition~\ref{prop:chain-rule} pulls this set back through the smooth map $F$, keeping only the multipliers compatible with criticality at $\bar x$. The result is that the locally minimal identifiable set of $f$ is the $F$-preimage of a union of active-piece regions, which we derive explicitly in Section~\ref{subsec:lmi-composite} as the starting point for the branching criterion.

	\section{Deterministic Branching Criterion}\label{sec:deterministic-criterion}
	
	To derive the branching criterion for finite-max composite problems, we proceed as follows. In Section~\ref{subsec:lmi-composite}, we combine Propositions~\ref{prop:finite-max-outer-identifiable} and~\ref{prop:chain-rule} to obtain an explicit description of the locally minimal identifiable set together with a compatible stratification. In Section~\ref{subsec:general-second-order}, we work under the hypothesis that this set is a $C^2$ identifiable manifold and derive its second-order expansion. Section~\ref{subsec:accessible-strata} proves the resulting compatibility condition across accessible strata (Proposition~\ref{prop:branch-compatibility}) and obtains the branching criterion by contraposition (Theorem~\ref{thm:branching-criterion}).
	
	\subsection{Locally Minimal Identifiable Set for Finite-Max Composite Functions}\label{subsec:lmi-composite}
	The following proposition combines Propositions~\ref{prop:finite-max-outer-identifiable} and~\ref{prop:chain-rule} to express the locally minimal identifiable set of the composite $f = g\circ F$ in closed form.
	
	\begin{proposition}[Locally minimal identifiable set for finite-max composites]\label{prop:lmi-finite-max}
		Consider the finite-max composite function $f=g\circ F$ of Section~\ref{subsec:problem-formulation}. Fix a critical point $\bar x$ with $\bar z:=F(\bar x)$ and active set $I(\bar z)$. Define the critical multiplier set
		\begin{equation}\label{eq::Lambda0}
			\Lambda_0:=\left\{\lambda\in\Delta_{\sI}:
			DF(\bar x)^*\Big(\textstyle\sum_{i\in  I(\bar z)}\lambda_i\nabla g_i(\bar z)\Big)=0, \supp \lambda \subseteq  I(\bar z)\right\},
		\end{equation}
		the family of active-support sets $\mathscr{K}:=\{\supp\lambda:\lambda\in\Lambda_0\}$, and for nonempty $\sK\subseteq  I(\bar z)$, the active locus
		\[
		\sA_{\sK}:=\{z:\sK\subseteq I(z)\}=\bigcap_{i\in \sK}\bigcap_{j\in\sI}\{z:g_i(z)\ge g_j(z)\}.
		\]
		Then the set
		\[
		\sM:=F^{-1}(\sE),
		\qquad
		\sE:=\bigcup_{\sK\in\mathscr{K}}\sA_{\sK},
		\]
		is the locally minimal identifiable set for $f$ at $\bar x$ for $0\in \partial f(\bar x)$.
	\end{proposition}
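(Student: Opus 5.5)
The plan is to apply the chain rule of Proposition~\ref{prop:chain-rule} with the outer function $g=\max_{i\in\sI}g_i$, and to supply the outer identifiable sets from Proposition~\ref{prop:finite-max-outer-identifiable}. First we check the hypotheses of Proposition~\ref{prop:chain-rule}. Since $g$ is a finite maximum of $C^2$ functions, it is locally Lipschitz, so $\partial^\infty g(\bar z)=\{0\}$ and the qualification condition $\ker DF(\bar x)^*\cap\partial^\infty g(\bar z)=\{0\}$ holds trivially. The same fact gives Clarke regularity of $g$ at every point near $\bar z$. The multiplier set of Proposition~\ref{prop:chain-rule} is $\Lambda=\partial g(\bar z)\cap\ker DF(\bar x)^*$, with $\partial g(\bar z)=\conv\{\nabla g_i(\bar z):i\in I(\bar z)\}$. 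Therefore $v\in\Lambda$ if and only if $v=\sum_{i\in I(\bar z)}\lambda_i\nabla g_i(\bar z)$ for some $\lambda\in\Delta_{\sI}$ with $\supp\lambda\subseteq I(\bar z)$ and $DF(\bar x)^*v=0$. In other words, $\Lambda$ is exactly the image of $\Lambda_0$ under $\lambda\mapsto\sum_i\lambda_i\nabla g_i(\bar z)$.

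Second, for each $v\in\Lambda$, Proposition~\ref{prop:finite-max-outer-identifiable} gives the locally minimal identifiable set
\[
\sM_v=\bigcup_{\lambda\in\Lambda_v}\{z:\supp\lambda\subseteq I(z)\}=\bigcup_{\lambda\in\Lambda_v}\sA_{\supp\lambda}.
\]
Here the second equality uses the definition of $\sA_\sK$. The identity $\{z:\sK\subseteq I(z)\}=\bigcap_{i\in\sK}\bigcap_j\{g_i\ge g_j\}$ is immediate from $I(z)=\{i:g_i(z)=\max_j g_j(z)\}$. Each $\sM_v$ is a union of sets $\sA_\sK$ indexed by subsets $\sK\subseteq I(\bar z)$. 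There are only finitely many such subsets, so the collection $\{\sM_v\}_{v\in\Lambda}$ is finite, which is the last hypothesis of Proposition~\ref{prop:chain-rule}.

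Third, Proposition~\ref{prop:chain-rule} yields the locally minimal identifiable set $\bigcup_{v\in\Lambda}F^{-1}(\sM_v)=F^{-1}\bigl(\bigcup_{v\in\Lambda}\sM_v\bigr)$, because preimages commute with unions. It remains to show $\bigcup_{v\in\Lambda}\sM_v=\sE$. If $\lambda\in\Lambda_0$, set $v:=\sum_i\lambda_i\nabla g_i(\bar z)$. Then $v\in\Lambda$ and $\lambda\in\Lambda_v$, so $\sA_{\supp\lambda}\subseteq\sM_v$. Conversely, if $v\in\Lambda$ and $\lambda\in\Lambda_v$, then $DF(\bar x)^*\sum_i\lambda_i\nabla g_i(\bar z)=DF(\bar x)^*v=0$, so $\lambda\in\Lambda_0$ and $\supp\lambda\in\mathscr K$. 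This establishes the claimed set equality.

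The main obstacle is bookkeeping rather than analysis. One must make sure the support constraint $\supp\lambda\subseteq I(\bar z)$ and the simplex normalization match across the two propositions. One must also check that nonemptiness of $\sK$ is automatic: every $\lambda\in\Delta_{\sI}$ has nonempty support. A further point is that $\Lambda_0\neq\varnothing$ because $\bar x$ is critical and $\partial f(\bar x)\subseteq DF(\bar x)^*\partial g(\bar z)$, so the union defining $\sE$ is nonempty and contains $\bar z$.
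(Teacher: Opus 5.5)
Your proposal is correct and matches the paper's proof essentially step for step. You verify the chain-rule hypotheses (qualification via $\partial^\infty g(\bar z)=\{0\}$, Clarke regularity of the finite max, and finiteness of $\{\sM_v\}_{v\in\Lambda}$ because each $\sM_v$ is a union of sets $\sA_\sK$ with $\sK\subseteq I(\bar z)$), and then identify $\bigcup_{v\in\Lambda}\sM_v$ with $\sE$ through the correspondence between $\Lambda$ and $\Lambda_0$; finiteness strictly comes from there being finitely many \emph{families} of subsets of $I(\bar z)$ rather than finitely many subsets, though your argument clearly intends this.
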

	
	\begin{proof}
		To apply Proposition~\ref{prop:chain-rule}, we first need to verify its conditions. Since $g$ is a finite maximum of $C^2$ functions, it is locally Lipschitz near $\bar z$, so $\partial^\infty g(\bar z)=\{0\}$. Hence $\ker DF(\bar x)^*\cap\partial^\infty g(\bar z)=\{0\}$ trivially, and the qualification condition holds. Moreover, $g$ is the finite maximum of $C^2$ functions, and hence Clarke regular, at every point near $\bar z$. Moreover, by Proposition~\ref{prop:finite-max-outer-identifiable}, the locally minimal identifiable set $\sM_v$ for $g$ at $\bar z$ for $v\in\partial g(\bar z)$ is
		\[
		\begin{aligned}
			\sM_v
			&=\bigcup_{\lambda\in\Lambda_v}\{z:\supp\lambda\subseteq I(z)\},\\
			\Lambda_v
			&=\Big\{\lambda\in\Delta_{\sI}:v=\textstyle\sum_{i\in  I(\bar z)}\lambda_i\nabla g_i(\bar z),
			\ \supp \lambda \subseteq  I(\bar z)\Big\}.
		\end{aligned}
		\]
		The set $\sM_v$ depends on $v$ only through the collection of supports $\{\supp\lambda:\lambda\in\Lambda_v\}$, which is a family of subsets of the finite set $ I(\bar z)$. There are finitely many such families, so $\{\sM_v\}_{v\in\Lambda}$ is finite. The conditions of Proposition~\ref{prop:chain-rule} are thus satisfied, and the locally minimal identifiable set of $f$ at $\bar x$ is
		\[
		\sM=\bigcup_{v\in\Lambda}F^{-1}(\sM_v), \qquad \Lambda=\partial g(\bar z)\cap\ker DF(\bar x)^*.
		\]
		Since $\partial g(\bar z)=\conv\{\nabla g_i(\bar z):i\in I(\bar z)\}$, every $v\in\partial g(\bar z)$ admits a representation $v=\sum_{i\in I(\bar z)}\lambda_i\nabla g_i(\bar z)$ for some $\lambda\in\Delta_{\sI}$ with $\supp\lambda\subseteq I(\bar z)$. The condition $v\in\ker DF(\bar x)^*$ then reads $DF(\bar x)^*\bigl(\sum_{i\in I(\bar z)}\lambda_i\nabla g_i(\bar z)\bigr)=0$. Hence the weights representing multipliers in $\Lambda$ are exactly the elements of $\Lambda_0$ defined in~\eqref{eq::Lambda0}.
		Finally, we focus on simplifying the description of $\sM$. Pulling $F^{-1}$ outside the union and substituting the expression for $\sM_v$ gives
		\[
		\sM
		=\bigcup_{v\in\Lambda}F^{-1}(\sM_v)
		=F^{-1}\!\left(\bigcup_{v\in\Lambda}\sM_v\right)
		=F^{-1}\!\left(\bigcup_{v\in\Lambda}\ \bigcup_{\lambda\in\Lambda_v}\{z:\supp\lambda\subseteq I(z)\}\right).
		\]
		As $v$ ranges over $\Lambda$ and $\lambda$ over $\Lambda_v$, the multiplier $\lambda$ ranges exactly over $\Lambda_0$. Writing $\sK=\supp\lambda$ and recalling $\sA_{\sK}=\{z:\sK\subseteq I(z)\}$ and $\mathscr{K}=\{\supp\lambda:\lambda\in\Lambda_0\}$,
		\[
		\bigcup_{v\in\Lambda}\bigcup_{\lambda\in\Lambda_v}\{z:\supp\lambda\subseteq I(z)\}
		=\bigcup_{\lambda\in\Lambda_0}\sA_{\supp\lambda}
		=\bigcup_{\sK\in\mathscr{K}}\sA_{\sK}
		=\sE.
		\]
		Therefore $\sM=F^{-1}(\sE)$, as claimed.
	\end{proof}
	
	The identity $\sE=\bigcup_{\sK\in\mathscr{K}}\sA_{\sK}$ expresses $\sE$ through the supports of the critical multipliers, each locus $\sA_{\sK}$ collecting the points where the pieces $g_i$, $i\in\sK$, stay active. We now refine it into a description of the stratification of $\sE$. Being definable, $\sE$ admits a $C^2$ stratification globally.
	Near $\bar z$, the active set is a subset of $ I(\bar z)$: an inactive piece with $g_i(\bar z)<g(\bar z)$ stays below $g$ around $\bar z$, so $I(z)\subseteq I(\bar z)$ on a neighborhood of $\bar z$. On such a neighborhood,
	the geometry of $\sE$ is governed by which subset of $ I(\bar z)$ stays active, and the following proposition shows this subset to be locally constant: on a neighborhood of $\bar z$, the strata of $\sE$ each carry a fixed active subset of $ I(\bar z)$. For $g=\|\cdot\|_1$, Example~\ref{ex:l1-lmi} makes the decomposition concrete: $\sE=\bigcup_{v\in\Lambda}\mathcal{S}_v$ is a polyhedral fan whose maximal cones are the top-dimensional strata, meeting along the lower-dimensional cones where sign patterns coincide (Figure~\ref{fig:ex_polyhedral_fan_hexagon}).
	
	\begin{proposition}[Stratification compatible with constant active set]\label{prop:compatible-stratification}
		Let $\sE=\bigcup_{\sK\in\mathscr{K}}\sA_{\sK}$ be as in Proposition~\ref{prop:lmi-finite-max}. For each $\sJ\subseteq I(\bar z)$, define the active-set region
		\[
		\sR_\sJ:=\{z:I(z)=\sJ\}.
		\]
		In a neighborhood of $\bar z$, the set $\sE$ admits a finite $C^2$ stratification $\mathscr{Z}$ compatible with the family $\{\sR_\sJ:\sJ\subseteq I(\bar z)\}$ in the sense of Definition~\ref{def:stratification}. Equivalently, for every stratum $\sZ\in\mathscr{Z}$ there is an active set $\sI_\sZ\subseteq I(\bar z)$ such that
		\[
		I(z)=\sI_\sZ
		\qquad\text{for all }z\in\sZ.
		\]
		In particular, $g$ agrees with each active piece $g_i$, $i\in\sI_\sZ$, on $\sZ$, so $g|_\sZ$ is the restriction of a $C^2$ function.
	\end{proposition}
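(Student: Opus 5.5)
The plan is to obtain the stratification directly from Theorem~\ref{thm:definable-stratification}, after checking that all relevant sets are definable, and then to read off the constant-active-set property from compatibility.

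\emph{Step 1: localization.} Since each $g_j$ is continuous, we can choose an open ball $\sU$ around $\bar z$ on which $g_j(z)<g(z)$ for every $j\notin I(\bar z)$. Then $I(z)\subseteq I(\bar z)$ for all $z\in\sU$, so the finitely many regions $\sR_\sJ\cap\sU$, $\sJ\subseteq I(\bar z)$, partition $\sU$. Empty $\sJ$ never occurs, because the maximum is always attained.

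\emph{Step 2: definability.} Each $\sR_\sJ$ is definable, since
\[
\sR_\sJ=\bigcap_{i\in\sJ}\{z:g_i(z)=g(z)\}\cap\bigcap_{j\notin\sJ}\{z:g_j(z)<g(z)\}.
\]
Here $g$ is a finite max of definable functions and hence definable, and the o-minimal structure is closed under Boolean operations. By the same reasoning each $\sA_\sK$ is definable, so $\sE\cap\sU$ is definable, the ball $\sU$ being semialgebraic.

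\emph{Step 3: stratify.} Apply Theorem~\ref{thm:definable-stratification} to the finite family
\[
\{\sE\cap\sU\}\cup\{\sE\cap\sU\cap\sR_\sJ:\sJ\subseteq I(\bar z)\}.
\]
Its union is $\sE\cap\sU$, so we obtain a $C^2$ stratification $\mathscr{Z}$ of $\sE\cap\sU$ compatible with each $\sE\cap\sU\cap\sR_\sJ$. Compatibility with $\sR_\sJ$ itself follows, because every stratum lies in $\sE\cap\sU$ and therefore meets $\sR_\sJ$ exactly where it meets $\sE\cap\sU\cap\sR_\sJ$.

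\emph{Step 4: constant active set.} The sets $\sR_\sJ\cap\sU$ partition $\sU$, and each stratum $\sZ$ is either contained in or disjoint from each of them. Hence $\sZ$ lies in exactly one such region, say $\sR_{\sI_\sZ}$, which gives $I(z)=\sI_\sZ$ on $\sZ$. Conversely, if every stratum has constant active set, then each stratum lies inside a single $\sR_\sJ$, which is compatibility; this proves the claimed equivalence. For any $i\in\sI_\sZ$ we have $g=g_i$ on $\sZ$, so $g|_\sZ$ is the restriction of the $C^2$ function $g_i$.

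The main obstacle is mild. It lies in Step~3: ensuring that the stratification theorem, which is stated for subsets of $\RR^d$, yields strata partitioning exactly $\sE$ near $\bar z$ while remaining compatible with sets not contained in $\sE$. Intersecting the family with $\sE\cap\sU$ before applying the theorem, as described in Step~3, handles this.
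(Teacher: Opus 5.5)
Your proof is correct and follows essentially the same route as the paper: you localize to a definable neighborhood $\sU$ on which $I(z)\subseteq I(\bar z)$, apply Theorem~\ref{thm:definable-stratification}, and read off a constant active set on each stratum from compatibility with the partition $\{\sR_\sJ\cap\sU\}$. The one difference is the family you stratify: you stratify $\sE\cap\sU$ directly using the regions intersected with $\sE$, whereas the paper stratifies all of $\sU$ compatibly with $\{\sR_\sJ\}$ and keeps the strata lying in $\sE$, which needs the observation $\sE=\bigcup_{\sJ\in\mathscr{K}^{\uparrow}}\sR_\sJ$; your variant skips that step, while the paper's also shows that $\mathscr{Z}$ is the restriction of a stratification of the whole neighborhood.
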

	
	\begin{proof}
		Choose a definable open neighborhood $\sU$ of $\bar z$ with $I(z)\subseteq I(\bar z)$ for all $z\in\sU$. We work within $\sU$ throughout, intersecting every set with $\sU$ without further mention. At each point of $\sU$ the maximum defining $g$ is attained, so the active set is a nonempty subset of $ I(\bar z)$, and the family $\{\sR_\sJ:\sJ\subseteq I(\bar z), \sJ\ne \emptyset\}$ partitions $\sU$. Each $\sR_\sJ$ is definable, being cut out by the equalities $g_i=g$ for $i\in\sJ$ together with the strict inequalities $g_i<g$ for $i\in I(\bar z)\setminus\sJ$.
		
		We first rewrite $\sE$ in terms of the regions $\sR_\sJ$. By definition, $\sA_{\sK}=\{z:\sK\subseteq I(z)\}$, and a point lies in $\sA_{\sK}$ exactly when its active set is one of the subsets of $ I(\bar z)$ containing $\sK$. Therefore
		\[
		\sA_{\sK}=\bigcup_{\sK\subseteq\sJ\subseteq I(\bar z)}\sR_\sJ,
		\qquad\text{and hence}\qquad
		\sE=\bigcup_{\sK\in\mathscr{K}}\sA_{\sK}
		=\bigcup_{\sJ\in\mathscr{K}^{\uparrow}}\sR_\sJ,
		\]
		where $\mathscr{K}^{\uparrow}:=\{\sJ\subseteq I(\bar z):\sK\subseteq\sJ\text{ for some }\sK\in\mathscr{K}\}$ is the collection of active sets that contain some member of $\mathscr{K}$. Thus $\sE$ is a union of regions, and any region $\sR_\sJ$ with $\sJ\notin\mathscr{K}^{\uparrow}$ is disjoint from $\sE$.
		Now apply Theorem~\ref{thm:definable-stratification} to the finite family $\{\sR_\sJ:\sJ\subseteq I(\bar z), \sJ\ne \emptyset\}$, whose union is $\sU$. The resulting $C^2$ stratification of $\sU$ is compatible with this family, so each stratum is contained in a single $\sR_\sJ$. Since $\sE$ is a union of the sets $\sR_\sJ$, every stratum is either contained in or disjoint from $\sE$, and we let $\mathscr{Z}$ be those contained in $\sE$. Recalling that $\sE=\bigcup_{\sJ\in\mathscr{K}^{\uparrow}}\sR_\sJ$, the members of $\mathscr{Z}$ are exactly the strata contained in some $\sR_\sJ$ with $\sJ\in\mathscr{K}^{\uparrow}$, and they partition $\sE$.
		
		Fix a stratum $\sZ\in\mathscr{Z}$ and let $\sJ$ be the unique index with $\sZ\subseteq\sR_\sJ$. The active set is then constant on $\sZ$, equal to $\sI_\sZ:=\sJ$, so that $g_i=g$ on $\sZ$ for every $i\in\sI_\sZ$. It follows that $g|_\sZ$ coincides with the restriction of the $C^2$ function $g_i$ for any such $i$, and is in particular $C^2$.
	\end{proof}

	Each active locus is a union of the sets $\sR_\sJ$, namely $A_K=\bigcup_{K\subseteq\sJ}\sR_\sJ$, and each $\sR_\sJ$ is a union of strata of $\mathscr{Z}$. Consequently $\sE$ is a union of strata, and on each stratum a single active set, hence a single piece $g_i$, agrees with $g$. Example~\ref{ex:damek-inactive-saddle} below, and the robust low-rank recovery of Section~\ref{sec:low-rank-specialization}, identify the strata explicitly and compare the pieces $g_i$ across them.
	
	Propositions~\ref{prop:lmi-finite-max} and~\ref{prop:compatible-stratification} together concretize the question of whether a critical point $\bar x$ admits a $C^2$ identifiable manifold. Such a manifold, if it exists, must coincide near $\bar x$ with $\sM=F^{-1}(\sE)$, on which $f$ restricts to a $C^2$ function with a single second-order expansion at $\bar x$. The stratification of $\sE$ turns this into a condition on the individual strata: each contributes its own second-order behavior to $f$ along the manifold. The next subsection computes the second-order contribution of each stratum, and Section~\ref{subsec:accessible-strata} derives the compatibility that these contributions must satisfy. The branching criterion is then obtained by contraposition.
	
	\subsection{Second-Order Expansion Under the Activity Hypothesis}\label{subsec:general-second-order}
	
	We derive a necessary condition for activity under the following standing hypothesis.
	
	\begin{assumption}[Activity hypothesis]\label{ass:activity}
		The locally minimal identifiable set $\sM=F^{-1}(\sE)$ is a $C^2$ identifiable manifold for $f$ at $\bar x$ for $0\in \partial f(\bar x)$.
	\end{assumption}
	
	This hypothesis entails no loss in deriving a necessary condition: if any $C^2$ identifiable manifold exists, Proposition~\ref{prop:auto-minimality} and the local uniqueness of the locally minimal identifiable set force it to agree with $\sM$ near $\bar x$. In particular, under the activity hypothesis, $\sM$ is a $C^2$ manifold near $\bar x$ and $f|_{\sM}$ is $C^2$. All tangent spaces, local charts, and chart-dependent quantities introduced below are understood under this hypothesis until it is discharged in Theorem~\ref{thm:branching-criterion}.
	
	Each stratum of $\sE$ determines a quadratic form, the leading second-order term contributed by that stratum. The activity hypothesis requires these forms to agree whenever the corresponding strata are reached along a common family of tangent directions, because the $C^2$ restriction $f|_{\sM}$ has a unique second-order expansion at $\bar x$. We first compute the contribution of each active piece and then identify which strata are accessible from $\sM$.
	
	Writing $T_{\bar{x}}\sM$ for the tangent space, Lemma~\ref{lem:chart-second-order} expresses points of $\sM$ near $\bar{x}$ as
	\begin{equation}\label{eq:general-chart}
		\gamma(v)=\bar{x}+v+Z_{\sM}(v)+o(\|v\|^2),
		\qquad
		v\in T_{\bar{x}}\sM,
	\end{equation}
	where $Z_{\sM}:T_{\bar{x}}\sM\to N_{\bar{x}}\sM$ is the homogeneous quadratic second-order term. 
	Define
	\begin{equation}\label{eq:eta-main}
		\eta_{\sM}(v):=DF(\bar{x})[Z_{\sM}(v)]+\tfrac12 D^2F(\bar{x})[v,v],
	\end{equation}
	and, for each active piece $i\in I(\bar z)$,
	\begin{equation}\label{eq:zeta-main}
		\zeta_i(v)
		:=\inp{\nabla g_i(\bar{z})}{\eta_{\sM}(v)}
		+\tfrac12\inp{DF(\bar{x})[v]}{\nabla^2 g_i(\bar{z})\,DF(\bar{x})[v]}.
	\end{equation}
	The next proposition shows that $\eta_{\sM}(v)$ and $\zeta_i(v)$ capture the second-order deviations of $\gamma(v)$ when pushed forward through $F$ and $g_i\circ F$, respectively.
	
	\begin{proposition}[Tangential first-order and second-order expansions]\label{prop:taylor-expansion}
		Let $\bar{x}$ be a critical point of $f$, and define $\bar z = F(\bar x)$. Suppose the activity hypothesis (Assumption~\ref{ass:activity}) holds.  Then:
		\begin{enumerate}
			\item for every active index $i\in  I(\bar z)$,
			\[
			P_{T_{\bar{x}}\sM}\nabla(g_i\circ F)(\bar{x})=\nabla_{\sM}f(\bar{x})=0;
			\]
			\item for all sufficiently small $v\in T_{\bar{x}}\sM$,
			\[
			F(\gamma(v))=\bar{z}+DF(\bar{x})[v]+\eta_{\sM}(v)+o(\|v\|^2);
			\]
			\item for all sufficiently small $v\in T_{\bar{x}}\sM$,
			\[
			f(\gamma(v))=g(\bar{z})+
			\max_{i\in  I(\bar z)}\zeta_i(v)+o(\|v\|^2).
			\]
		\end{enumerate}
	\end{proposition}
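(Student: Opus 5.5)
Parts 2 and 3 are direct Taylor computations along the chart $\gamma$ of Lemma~\ref{lem:chart-second-order}. Part 1 is the one step that genuinely needs the identifiable-manifold structure. For Part 2, write $\gamma(v)-\bar x=v+Z_{\sM}(v)+o(\|v\|^2)$ and expand the $C^2$ map $F$ at $\bar x$:
\[
F(\gamma(v))=\bar z+DF(\bar x)[\gamma(v)-\bar x]+\tfrac12D^2F(\bar x)[\gamma(v)-\bar x,\gamma(v)-\bar x]+o(\|\gamma(v)-\bar x\|^2).
\]
Since $\|\gamma(v)-\bar x\|=O(\|v\|)$, the quadratic term equals $\tfrac12D^2F(\bar x)[v,v]+O(\|v\|^3)$. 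The linear term equals $DF(\bar x)[v]+DF(\bar x)[Z_{\sM}(v)]+o(\|v\|^2)$. Collecting these gives $\bar z+DF(\bar x)[v]+\eta_{\sM}(v)+o(\|v\|^2)$.

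For Part 3, first shrink the neighborhood so that $I(z)\subseteq I(\bar z)$. Then $f(\gamma(v))=\max_{i\in I(\bar z)}g_i(F(\gamma(v)))$. Expand each $C^2$ piece $g_i$ at $\bar z$ with increment $\delta(v):=DF(\bar x)[v]+\eta_{\sM}(v)+o(\|v\|^2)$ from Part 2. Since $\delta(v)=O(\|v\|)$, we get
\[
g_i(F(\gamma(v)))=g(\bar z)+\inp{\nabla g_i(\bar z)}{DF(\bar x)[v]}+\zeta_i(v)+o(\|v\|^2).
\]
Here $g_i(\bar z)=g(\bar z)$ because $i$ is active. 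The first-order term is $\inp{DF(\bar x)^*\nabla g_i(\bar z)}{v}=\inp{\nabla(g_i\circ F)(\bar x)}{v}$, and this vanishes for $v\in T_{\bar x}\sM$ by Part 1. Finally, a maximum of finitely many quantities, each perturbed by $o(\|v\|^2)$, is itself perturbed by $o(\|v\|^2)$, which yields Part 3.

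Part 1 is the main obstacle. The claim is that \emph{every} active piece, not just some convex combination, has zero tangential gradient. The equality $\nabla_{\sM}f(\bar x)=0$ follows from a standard fact for $C^2$ identifiable manifolds at critical points: since $0\in\partial f(\bar x)$ and $f|_{\sM}$ is $C^2$, we have $\partial f(\bar x)\subseteq\nabla_{\sM}f(\bar x)+N_{\bar x}\sM$. Alternatively, take any $\lambda\in\Lambda_0$. Then $\sum_i\lambda_i\nabla f_i(\bar x)=0$, and along $\sM$ each $f_i$ with $i\in\supp\lambda$ agrees with $f$ on the branch $F^{-1}(\sA_{\supp\lambda})$. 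That route, however, needs the branch to carry all tangent directions, so I would prefer the general inclusion.

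For the individual pieces, I plan the following argument. By Proposition~\ref{prop:compatible-stratification}, $\sM$ is covered near $\bar x$ by the preimages of finitely many strata, and on each one $f=f_j$ for all $j$ in the stratum's active set. Suppose $i\in I(\bar z)$ and some unit $u\in T_{\bar x}\sM$ satisfy $\inp{\nabla f_i(\bar x)}{u}\neq0$. Since $f\ge f_i$ everywhere and $f_i(\bar x)=f(\bar x)$, we get, along $\gamma(\pm tu)$,
\[
f(\gamma(\pm tu))\ge f(\bar x)\pm t\inp{\nabla f_i(\bar x)}{u}+O(t^2).
\]
One of these two curves therefore has $f|_{\sM}$ increasing at first order in $t$. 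But $f|_{\sM}$ is $C^2$ with zero Riemannian gradient, so $f(\gamma(\pm tu))=f(\bar x)+O(t^2)$, a contradiction. Hence $P_{T_{\bar x}\sM}\nabla f_i(\bar x)=0$ for every $i\in I(\bar z)$. This lower-bound argument looks clean, so I expect the real work to be justifying $\nabla_{\sM}f(\bar x)=0$ rigorously from $0\in\partial f(\bar x)$ and subdifferential regularity, for which I would cite the partial-smoothness characterization in \citet{drusvyatskiy2014optimality}.
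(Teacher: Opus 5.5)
Your proposal is correct, and parts (ii) and (iii) are the same Taylor computations the paper does. The difference is in part (i).

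The paper never imports $\nabla_{\sM}f(\bar x)=0$. It works in three steps:
\begin{enumerate}
\item It shows that the one-sided derivative of $f$ along a curve in $\sM$ with velocity $v$ equals $\max_{i\in I(\bar z)}\inp{\nabla f_i(\bar x)}{v}$.
\item Linearity in $v$ (replace $v$ by $-v$) forces the max to equal the min, so $P_{T_{\bar x}\sM}\nabla f_i(\bar x)=\nabla_{\sM}f(\bar x)$ for every active $i$.
\item Since $\partial f(\bar x)=\conv\{\nabla f_i(\bar x):i\in I(\bar z)\}$ contains $0$, projecting a combination $0=\sum_i\lambda_i\nabla f_i(\bar x)$ onto $T_{\bar x}\sM$ gives $\nabla_{\sM}f(\bar x)=0$.
\end{enumerate}

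You reverse the order. You first cite $\partial f(\bar x)\subseteq\nabla_{\sM}f(\bar x)+N_{\bar x}\sM$, and only then use $f\ge f_i$ along $\gamma(\pm tu)$. This works, but the step you flag as the main obstacle is elementary here. Since $f$ is regular, every $w\in\partial f(\bar x)$ satisfies $f(\gamma(tu))\ge f(\bar x)+t\inp{w}{u}+o(t)$ for $t>0$. At the same time, $f(\gamma(tu))=f(\bar x)+t\inp{\nabla_{\sM}f(\bar x)}{u}+o(t)$. Testing $\pm u$ then gives $P_{T_{\bar x}\sM}w=\nabla_{\sM}f(\bar x)$.

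Two choices of $w$ now finish part (i):
\begin{itemize}
\item Taking $w=0$ gives $\nabla_{\sM}f(\bar x)=0$.
\item Taking $w=\nabla f_i(\bar x)$, which lies in $\partial f(\bar x)$ for active $i$, gives the per-piece claim directly.
\end{itemize}
So your separate lower-bound argument is this same inequality with $w=\nabla f_i(\bar x)$. Likewise, if you run your lower-bound argument without assuming $\nabla_{\sM}f(\bar x)=0$, you get exactly the paper's intermediate identity, and the convex-hull formula then closes the proof with no citation.

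The paper's route is self-contained and uses only the finite-max structure. Yours frames (i) as a general fact about regular functions that are $C^1$ along $\sM$, but it relies on an external reference you do not actually need.
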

	
	\begin{proof}
		Write $f_i := g_i\circ F$, so that $f = \max_{i\in I(\bar z)} f_i$ on a neighborhood of $\bar x$, where each $f_i$ is $C^2$ with $\nabla f_i(\bar x) = DF(\bar x)^*\nabla g_i(\bar z)$.
		
		\emph{Proof of (i).}
		Fix $v\in T_{\bar x}\sM$ and let $t\mapsto\gamma(t)$ be a $C^1$ curve in $\sM$ with $\gamma(0)=\bar x$ and $\dot\gamma(0)=v$. Since each $f_i$ is differentiable and, on a neighborhood of $\bar x$, the active indices $ I(\bar z)$ are the only ones attaining the maximum, the one-sided derivative of $f$ along $\gamma$ at $t=0$ is
		\[
		\lim_{t\downarrow 0}\frac{f(\gamma(t))-f(\bar x)}{t}
		=\max_{i\in  I(\bar z)}\inp{\nabla f_i(\bar x)}{v}.
		\]
		Since $f|_{\sM}$ is differentiable at $\bar x$, the left-hand side equals $\inp{\nabla_{\sM}f(\bar x)}{v}$, so
		\begin{equation}\label{eq:tangential-max}
			\inp{\nabla_{\sM}f(\bar x)}{v}
			=\max_{i\in \sI_\star}\inp{\nabla f_i(\bar x)}{v}
			\qquad\text{for all }v\in T_{\bar x}\sM.
		\end{equation}
		The left-hand side of \eqref{eq:tangential-max} is linear in $v$. Noting that $-v\in T_{\bar x}\sM$ and replacing $v$ by $-v$ gives
		\[
		\max_{i\in I(\bar z)}\inp{\nabla f_i(\bar x)}{v}
		=-\max_{i\in I(\bar z)}\inp{\nabla f_i(\bar x)}{-v}
		=\min_{i\in I(\bar z)}\inp{\nabla f_i(\bar x)}{v}.
		\]
		Thus the maximum and minimum over $i\in I(\bar z)$ agree for every $v\in T_{\bar x}\sM$, so all $\inp{\nabla f_i(\bar x)}{v}$ coincide and, by~\eqref{eq:tangential-max}, equal $\inp{\nabla_{\sM}f(\bar x)}{v}$. Equivalently, $P_{T_{\bar x}\sM}\nabla f_i(\bar x) = \nabla_{\sM}f(\bar x)$ for every $i\in I(\bar z)$. Since $0\in\partial f(\bar x) = \conv\{\nabla f_i(\bar x):i\in I(\bar z)\}$, projecting onto $T_{\bar x}\sM$ gives $\nabla_{\sM}f(\bar x)=0$.
		
		\emph{Proof of (ii).}
		Using Taylor expansion of $F$ along the chart \eqref{eq:general-chart} and grouping by order gives
		\begin{align*}
			F(\gamma(v))
			&=\bar z+DF(\bar x)[v]
			+\big(DF(\bar x)[Z_{\sM}(v)]+\tfrac12 D^2F(\bar x)[v,v]\big)
			+o(\|v\|^2)\\
			&=\bar{z}+DF(\bar{x})[v]+\eta_{\sM}(v)+o(\|v\|^2),
		\end{align*}
		where the second line uses the definition of $\eta_{\sM}(v)$ in~\eqref{eq:eta-main}.
		
		\emph{Proof of (iii).}
		Fix $i\in  I(\bar z)$. Using Taylor expansion of $g_i\circ F$ along the chart \eqref{eq:general-chart}, and using the conclusion (ii), we have
		\begin{align*}
			&g_i(F(\gamma(v)))\\
			&=g_i(\bar z)
			+\inp{\nabla g_i(\bar z)}{DF(\bar x)[v]}
			+\inp{\nabla g_i(\bar{z})}{\eta_{\sM}(v)}
			+\tfrac12\inp{DF(\bar{x})[v]}{\nabla^2 g_i(\bar{z})\,DF(\bar{x})[v]}\!+\!o(\|v\|^2)\\
			&=g_i(\bar z)
			+\inp{\nabla g_i(\bar z)}{DF(\bar x)[v]}+\zeta_i(v)+o(\|v\|^2)
		\end{align*}
		where the third line uses the definition of $\zeta_i$ in \eqref{eq:zeta-main}. By conclusion (i) the linear term satisfies $\inp{\nabla g_i(\bar z)}{DF(\bar x)[v]}=\inp{\nabla f_i(\bar x)}{v}=0$ for all $v\in T_{\bar x}\sM$, and $g_i(\bar z)=g(\bar z)$ since $i$ is active, so
		\begin{align}\label{eq::zeta_i-g}
			g_i(F(\gamma(v)))=g(\bar z)+\zeta_i(v)+o(\|v\|^2).
		\end{align}
		By continuity, each inactive piece stays strictly below $g$ on a neighborhood of $\bar z$, and $\sI$ is finite. Therefore, for small enough $v$, the active piece of $f$ is attained on $ I(\bar z)$. This implies
		\[
		f(\gamma(v))=\max_{i\in  I(\bar z)}\{g_i(F(\gamma(v)))\} = g(\bar{z})+
		\max_{i\in  I(\bar z)}\zeta_i(v)+o(\|v\|^2).
		\]
		This completes the proof.
	\end{proof}
	
	Under the activity hypothesis, conclusion (i) says that every active piece of $f$ has the same first-order variation along $T_{\bar x}\sM$, equal to that of $f|_{\sM}$, and this common value is zero. Crucially, this does \emph{not} require $DF(\bar x)[v]$ itself to vanish: the inner displacement may be nonzero, provided it is orthogonal to every active outer gradient $\nabla g_i(\bar z)$. Tangent directions are thus constrained only through the active gradients, and first-order information alone cannot pin down the manifold. This is why the second-order coefficients $\zeta_i$ of conclusion (iii) are needed.
	
	\subsection{Accessible Strata and Branching Criterion}\label{subsec:accessible-strata}
	We now identify which strata contribute to the local expansion under the activity hypothesis. For a comparison between two strata to be meaningful, the manifold must reach both strata, and do so along an open set of tangent directions rather than a single curve. We introduce the following definition to isolate this condition.
	
	\begin{definition}[Accessible stratum]\label{def:accessible-stratum}
		Under the activity hypothesis (Assumption~\ref{ass:activity}), let $\sW\subseteq T_{\bar{x}}\sM$ be a linear subspace. A stratum $\sZ\in\mathscr{Z}$ is \emph{accessible} from $\sM$ at $\bar{x}$ along $\sW$ if there is a nonempty relatively open cone $\sC\subset\sW\setminus\{0\}$ such that, for every unit vector $v\in \sC$, there is $\eps(v)>0$ with
		\[
		F(\gamma(tv))\in \sZ,
		\qquad 0<t<\eps(v).
		\]
	\end{definition}

	Note that the open cone is essential in this definition. Reaching a stratum along a single direction would describe $f$ only along that one direction, whereas reaching it along an open cone describes it on a full solid angle of directions in $\sW$. Intuitively, $\sZ$ is accessible when the curve $t\mapsto F(\gamma(tv))$ enters $\sZ$ for a non-zero measure of tangent directions $v$ in $\sW$.

	\begin{lemma}[Quadratic forms agreeing on an open cone]\label{lem:quadratic-open-cone}
		Let $q_1,q_2$ be quadratic forms on a finite-dimensional vector space $\sW$. If $q_1=q_2$ on a nonempty relatively open cone in $\sW$, then $q_1= q_2$ on $\sW$.
	\end{lemma}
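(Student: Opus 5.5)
I would reduce the statement to a fact about a single quadratic form. Set $q:=q_1-q_2$, a quadratic form on $\sW$ that vanishes on a nonempty relatively open cone $\sC\subseteq\sW$; the goal is $q\equiv0$ on $\sW$. Write $q(w)=B(w,w)$ with $B$ the associated symmetric bilinear form, obtained by polarization $B(u,w)=\tfrac14\bigl(q(u+w)-q(u-w)\bigr)$. Since $q$ is determined by $B$, it suffices to show $B=0$.

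The key step uses openness at a point of the cone. Fix $w_0\in\sC$. Because $\sC$ is relatively open in $\sW$, there is $\delta>0$ with $w_0+s u\in\sC$ for every $u\in\sW$ with $\|u\|\le 1$ and every $|s|\le\delta$. For such $u$, the map $p(s):=q(w_0+su)=q(w_0)+2sB(w_0,u)+s^2q(u)$ is a polynomial of degree at most two in $s$ that vanishes on the interval $[-\delta,\delta]$. A polynomial with infinitely many zeros is identically zero, so all its coefficients vanish; in particular the leading coefficient gives $q(u)=0$. Since every $u\in\sW$ can be rescaled into the unit ball and $q$ is homogeneous of degree two, $q(u)=0$ for all $u\in\sW$, which means $q_1=q_2$ on $\sW$.

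An alternative route, which I would mention only as a cross-check, is that a polynomial function on the finite-dimensional space $\sW$ vanishing on a nonempty open subset vanishes identically (its Taylor coefficients at an interior point are all zero). The one-variable argument above avoids this multivariate fact and only needs that a degree-two polynomial on an interval is determined by its values. The cone structure is not even needed beyond containing a relatively open set; homogeneity is used only at the end to pass from the unit ball to all of $\sW$.

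There is no substantial obstacle here. The only point requiring care is interpreting ``relatively open cone'' correctly: openness is relative to $\sW$ and not to the ambient space, so perturbations $w_0+su$ must be taken with $u\in\sW$. The argument stays entirely within $\sW$, so this causes no difficulty. The trivial case $\sW=\{0\}$ cannot occur, because $\sC\subset\sW\setminus\{0\}$ is assumed nonempty.
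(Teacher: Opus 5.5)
Your proof is correct and follows the same idea as the paper. The paper observes that $q_1-q_2$ is a homogeneous quadratic polynomial and cites the fact that a polynomial vanishing on a nonempty open set vanishes identically; your restriction to the lines $s\mapsto w_0+su$ is a self-contained proof of that fact for degree two. You do not even need homogeneity at the end, since for any $u\in\sW$ the point $w_0+su$ lies in the cone for all sufficiently small $|s|$.
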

	
	\begin{proof}
		The difference $q_1-q_2$ is a homogeneous polynomial of degree two. A polynomial that vanishes on a nonempty open subset of its domain vanishes identically.
	\end{proof}
	
	On an accessible stratum, all active pieces share a single quadratic form, making the following definition unambiguous.
	
	\begin{lemma}[Branch quadratic of a stratum]\label{lem:stratum-quadratic}
		Under the activity hypothesis (Assumption~\ref{ass:activity}), let $\sZ\in\mathscr{Z}$ be accessible from $\sM$ at $\bar x$ along a linear subspace $\sW\subseteq T_{\bar x}\sM$. Then the restrictions $\zeta_i|_{\sW}$, with $\zeta_i$ as in~\eqref{eq:zeta-main}, coincide for all $i\in\sI_{\sZ}$. We call this common quadratic form the \emph{branch quadratic} of $\sZ$ along $\sW$ and denote it $\zeta_{\sZ,\sW}$.
	\end{lemma}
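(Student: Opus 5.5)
Fix two indices $i,j\in\sI_{\sZ}$ and let $\sC\subset\sW\setminus\{0\}$ be the nonempty relatively open cone from Definition~\ref{def:accessible-stratum}. The plan is to show that $\zeta_i$ and $\zeta_j$ agree on every unit vector of $\sC$. By homogeneity of degree two, this gives agreement on all of $\sC$, and Lemma~\ref{lem:quadratic-open-cone} then upgrades it to agreement on all of $\sW$.

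Fix a unit vector $v\in\sC$ and take $0<t<\eps(v)$. By accessibility, $F(\gamma(tv))\in\sZ$, and Proposition~\ref{prop:compatible-stratification} gives $I(F(\gamma(tv)))=\sI_{\sZ}$. Since $i,j\in\sI_{\sZ}$, both pieces attain the maximum there:
\[
g_i(F(\gamma(tv)))=g_j(F(\gamma(tv))).
\]
Now apply the expansion \eqref{eq::zeta_i-g} from the proof of Proposition~\ref{prop:taylor-expansion}, which is valid under Assumption~\ref{ass:activity}. Evaluating it at $tv$ and using homogeneity of $\zeta_i$ and $\zeta_j$, we get
\[
g(\bar z)+t^2\zeta_i(v)+o(t^2)=g(\bar z)+t^2\zeta_j(v)+o(t^2).
\]
Dividing by $t^2$ and letting $t\downarrow0$ yields $\zeta_i(v)=\zeta_j(v)$.

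The only delicate point is the $o(\|tv\|^2)$ remainder. Its rate may depend on $v$ through $\eps(v)$, but the limit is taken along a fixed ray, so only pointwise convergence in $t$ is needed and no uniformity over $\sC$ is required. The expansion \eqref{eq::zeta_i-g} itself uses conclusion (i) of Proposition~\ref{prop:taylor-expansion} to kill the linear term; this holds because $tv\in T_{\bar x}\sM$.

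Having shown that $\zeta_i|_{\sW}=\zeta_j|_{\sW}$ for every pair $i,j\in\sI_{\sZ}$, the common restriction is well defined, and we denote it by $\zeta_{\sZ,\sW}$.
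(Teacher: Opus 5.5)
Your proof is correct and follows the paper's argument step for step: you use the equality $g_i(F(\gamma(tv)))=g_j(F(\gamma(tv)))$ along the accessibility cone, then the per-branch expansion~\eqref{eq::zeta_i-g} (which applies since $\sI_{\sZ}\subseteq I(\bar z)$), then divide by $t^2$ along each fixed ray, use homogeneity, and finish with Lemma~\ref{lem:quadratic-open-cone}. Your remark that only per-ray convergence is needed, with no uniformity over $\sC$, is accurate and matches how the paper uses the expansion.
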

	
	\begin{proof}
		Let $\sC\subset\sW\setminus\{0\}$ be an accessibility cone for $\sZ$, and take $i,i'\in \sI_{\sZ}$. For every unit $v\in \sC$ and all sufficiently small $t>0$, the point $F(\gamma(tv))$ lies in $\sZ$, and since the active set is constant on $\sZ$,
		\[
		g_i(F(\gamma(tv)))=g_{i'}(F(\gamma(tv))).
		\]
		By the per-branch expansion \eqref{eq::zeta_i-g}, we have $g_i(F(\gamma(tv)))=g(\bar z)+t^2\zeta_i(v)+o(t^2)$ for each active $i$, so
		\[
		0=g_i(F(\gamma(tv)))-g_{i'}(F(\gamma(tv)))
		=t^2\bigl(\zeta_i(v)-\zeta_{i'}(v)\bigr)+o(t^2).
		\]
		Dividing by $t^2$ and letting $t\downarrow0$ gives $\zeta_i(v)=\zeta_{i'}(v)$ for every unit $v\in \sC$, and hence on all of $\sC$ by homogeneity. Lemma~\ref{lem:quadratic-open-cone} then gives $\zeta_i|_{\sW}\equiv\zeta_{i'}|_{\sW}$, so $\zeta_{\sZ, \sW}$ is well-defined.
	\end{proof}
	
	The activity hypothesis imposes a single quadratic expansion across all accessible strata. This is the key necessary condition behind the branching criterion.
	
	\begin{proposition}[Compatibility of accessible branch quadratics]\label{prop:branch-compatibility}
		Under the activity hypothesis, let $\sW\subseteq T_{\bar x}\sM$ be a linear subspace. If $\sZ_1,\sZ_2\in\mathscr{Z}$ are accessible from $\sM$ at $\bar x$ along $\sW$, then $\zeta_{\sZ_1,\sW}=\zeta_{\sZ_2,\sW}$ on $\sW$.
	\end{proposition}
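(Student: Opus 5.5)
The plan is to compare both branch quadratics with the second-order expansion of the $C^2$ restriction $f|_{\sM}$ at $\bar x$, and then extend the resulting agreement to all of $\sW$ using Lemma~\ref{lem:quadratic-open-cone}.

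First, record the expansion of $f|_{\sM}$. Under the activity hypothesis, $f\circ\gamma$ is a $C^2$ function on a neighborhood of $0$ in $T_{\bar x}\sM$. By Proposition~\ref{prop:taylor-expansion}(i) its gradient at $0$ vanishes. Taylor's theorem therefore gives
\[
f(\gamma(v))=g(\bar z)+q(v)+o(\|v\|^2),
\]
where $q(v):=\tfrac12 D^2(f\circ\gamma)(0)[v,v]$ is a single quadratic form on $T_{\bar x}\sM$.

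Second, fix $k\in\{1,2\}$ and an accessibility cone $\sC_k\subset\sW\setminus\{0\}$ for $\sZ_k$. Take a unit vector $v\in\sC_k$ and $0<t<\eps(v)$. Then $F(\gamma(tv))\in\sZ_k$, so the active set there is $\sI_{\sZ_k}$, and for any $i\in\sI_{\sZ_k}$ we have $f(\gamma(tv))=g_i(F(\gamma(tv)))$. The per-branch expansion \eqref{eq::zeta_i-g} then yields
\[
f(\gamma(tv))=g(\bar z)+t^2\zeta_i(v)+o(t^2).
\]
Comparing with the first step, dividing by $t^2$ and letting $t\downarrow0$ gives $q(v)=\zeta_i(v)=\zeta_{\sZ_k,\sW}(v)$. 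By homogeneity this holds for every $v\in\sC_k$.

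Third, Lemma~\ref{lem:quadratic-open-cone}, applied to $q|_{\sW}$ and $\zeta_{\sZ_k,\sW}$ on the open cone $\sC_k$, gives $\zeta_{\sZ_k,\sW}=q|_{\sW}$ on all of $\sW$ for each $k$. Hence $\zeta_{\sZ_1,\sW}=q|_{\sW}=\zeta_{\sZ_2,\sW}$. This comparison works even when the cones $\sC_1$ and $\sC_2$ are disjoint, because both forms are matched against the same $q$.

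The main obstacle is the limit in the second step: the $o(t^2)$ remainders must be uniform enough that dividing by $t^2$ is legitimate. Since $v$ is fixed and only $t\to0$, the remainders in both Proposition~\ref{prop:taylor-expansion} and the Taylor expansion of $f\circ\gamma$ are $o(\|tv\|^2)=o(t^2)$, so this step goes through.

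A secondary point is that $\sZ_k\subseteq\sE$ lies in the neighborhood $\sU$ of $\bar z$ on which $I(z)\subseteq I(\bar z)$. This justifies writing $f=g_i\circ F$ along these curves with $i\in I(\bar z)$, so that $\zeta_i$ is the coefficient defined in \eqref{eq:zeta-main}.
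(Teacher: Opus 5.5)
Your proposal is correct and follows essentially the same route as the paper: you match each branch quadratic against the single second-order form $q$ of the $C^2$ restriction $f\circ\gamma$ on its accessibility cone, and then Lemma~\ref{lem:quadratic-open-cone} extends each identity $q|_{\sW}=\zeta_{\sZ_j,\sW}$ to all of $\sW$. Your remarks on the pointwise-in-$v$ limit and on $\sZ_j$ lying in the neighborhood where $I(z)\subseteq I(\bar z)$ make explicit steps that the paper leaves implicit.
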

	
	\begin{proof}
		By Proposition~\ref{prop:taylor-expansion}, $\nabla_{\sM}f(\bar x)=0$. Since $f|_{\sM}$ is $C^2$ under the activity hypothesis, there is a unique quadratic form $q$ on $T_{\bar x}\sM$ such that
		\[
		f(\gamma(v))=f(\bar x)+q(v)+o(\|v\|^2).
		\]
		Fix $j\in\{1,2\}$ and let $\sC_j\subset\sW\setminus\{0\}$ be an accessibility cone for $\sZ_j$. For $v\in\sC_j$ and all sufficiently small $t>0$, the point $F(\gamma(tv))$ lies in $\sZ_j$. Hence, for any $i\in\sI_{\sZ_j}$, the per-branch expansion~\eqref{eq::zeta_i-g} and Lemma~\ref{lem:stratum-quadratic} give
		\[
		f(\gamma(tv))=g(\bar z)+t^2\zeta_{\sZ_j,\sW}(v)+o(t^2).
		\]
		Comparing the two expansions shows that $q(v)=\zeta_{\sZ_j,\sW}(v)$ on $\sC_j$. Lemma~\ref{lem:quadratic-open-cone} therefore yields $q|_{\sW}\equiv\zeta_{\sZ_j,\sW}$. Applying this identity for $j=1,2$ proves the result.
	\end{proof}

	We now discharge the activity hypothesis. The following contrapositive is the main result of this section.
	
	\begin{theorem}[Branching criterion]\label{thm:branching-criterion}
		Let $\bar x$ be a critical point of the finite-max composite $f=g\circ F$, with locally minimal identifiable set $\sM=F^{-1}(\sE)$ from Proposition~\ref{prop:lmi-finite-max} and compatible stratification $\mathscr{Z}$ from Proposition~\ref{prop:compatible-stratification}. Suppose that the activity hypothesis implies the existence of a linear subspace $\sW\subseteq T_{\bar x}\sM$ and two strata $\sZ_1,\sZ_2\in\mathscr{Z}$ such that
		\begin{enumerate}
			\item[(a)] both $\sZ_1$ and $\sZ_2$ are accessible from $\sM$ at $\bar x$ along $\sW$;
			\item[(b)] their branch quadratics disagree: $\zeta_{\sZ_1,\sW}\not\equiv\zeta_{\sZ_2,\sW}$.
		\end{enumerate}
		Then $f$ admits no $C^2$ identifiable manifold at $\bar x$ for $0\in\partial f(\bar x)$; that is, $\bar x$ is inactive.
	\end{theorem}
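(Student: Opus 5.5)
The argument is by contradiction, and it rests on the reduction already set up before the statement. I will suppose that $f$ admits some $C^2$ identifiable manifold $\sN$ at $\bar x$ for $0$ and derive a contradiction with (a)--(b).

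\emph{Step 1: any identifiable manifold coincides with $\sM$.} Since $f=g\circ F$ is a finite maximum of the $C^2$ pieces $f_i=g_i\circ F$ near $\bar x$, it is locally Lipschitz and subdifferentially regular. Proposition~\ref{prop:auto-minimality} therefore shows that $\sN$ is a locally minimal identifiable set at $\bar x$ for $0$. By Proposition~\ref{prop:lmi-finite-max}, $\sM=F^{-1}(\sE)$ is also locally minimal. Applying the definition of local minimality in both directions gives a neighborhood $\sO$ of $\bar x$ with $\sN\cap\sO=\sM\cap\sO$. Hence near $\bar x$ the set $\sM$ is a $C^2$ manifold on which $f$ restricts to a $C^2$ function. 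Identifiability is a local property, so $\sM$ is itself a $C^2$ identifiable manifold. This is exactly Assumption~\ref{ass:activity}.

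\emph{Step 2: invoke the hypothesis of the theorem.} The theorem assumes that the activity hypothesis produces a subspace $\sW\subseteq T_{\bar x}\sM$ and strata $\sZ_1,\sZ_2\in\mathscr{Z}$ satisfying (a) and (b). By (a) and Lemma~\ref{lem:stratum-quadratic}, both branch quadratics $\zeta_{\sZ_1,\sW}$ and $\zeta_{\sZ_2,\sW}$ are well defined. Proposition~\ref{prop:branch-compatibility} then gives $\zeta_{\sZ_1,\sW}\equiv\zeta_{\sZ_2,\sW}$ on $\sW$, which contradicts (b). Therefore no $C^2$ identifiable manifold exists, and by Definition~\ref{def:inactive-critical-point} the point $\bar x$ is inactive.

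\emph{Main obstacle: Step 1.} The delicate point is that the chart $\gamma$, $T_{\bar x}\sM$, and the forms $\zeta_i$ are only meaningful once $\sM$ is known to be a manifold. So the transfer from the hypothetical $\sN$ to $\sM$ must be made carefully:
\begin{itemize}
\item the local equality $\sN\cap\sO=\sM\cap\sO$ must come from two-sided local minimality, not from a one-sided inclusion;
\item the $C^2$-manifold property and the $C^2$-ness of $f|_{\sM}$ are local at $\bar x$ and therefore pass across that equality;
\item the subdifferential regularity required by Proposition~\ref{prop:auto-minimality} must be checked for $f$ itself, not only for $g$. This follows from the finite-max structure of $f$, since $f=\max_{i\in\sI} f_i$ with each $f_i$ of class $C^2$.
\end{itemize}
Once these are in place, Step 2 is an immediate application of the stated results.
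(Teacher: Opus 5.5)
Your proposal is correct and follows the paper's own proof. The paper also argues by contradiction: it uses Proposition~\ref{prop:auto-minimality} and local uniqueness of the locally minimal identifiable set to reduce to the activity hypothesis, then lets Proposition~\ref{prop:branch-compatibility} contradict (b). Your extra checks (two-sided local minimality giving local equality, and subdifferential regularity of $f$ as a finite max of $C^2$ pieces) make explicit what the paper compresses into a single line.
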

	
	\begin{proof}
		Suppose, toward a contradiction, that a $C^2$ identifiable manifold exists at $\bar x$ for $0\in\partial f(\bar x)$. By Proposition~\ref{prop:auto-minimality} and the local uniqueness of the locally minimal identifiable set, this manifold agrees near $\bar x$ with $\sM=F^{-1}(\sE)$. Thus the activity hypothesis holds. The premise of the theorem then provides $\sW$, $\sZ_1$, and $\sZ_2$ satisfying (a)--(b), whereas Proposition~\ref{prop:branch-compatibility} forces $\zeta_{\sZ_1,\sW}\equiv\zeta_{\sZ_2,\sW}$, a contradiction.
	\end{proof}

	We now illustrate the branching criterion of Theorem~\ref{thm:branching-criterion} with an example from \citep{davis2022proximal}, explicitly verifying each of its conditions.

	\begin{example}[A two-piece inactive critical point]\label{ex:damek-inactive-saddle}
		Consider 
		$$f(x_1,x_2) = (|x_1|+|x_2|)^2 - 2x_1^2.$$
		\citet[Figure 1a]{davis2022proximal} visually illustrate that the origin is an inactive critical point of $f$ (in fact, a strict saddle). We now confirm this rigorously by directly applying our branching criterion. This function can be rewritten as
		\[
		f(x_1,x_2)=x_2^2-x_1^2+2|x_1x_2| = \max\{x_2^2-x_1^2+2x_1x_2, x_2^2-x_1^2-2x_1x_2\}.
		\]
		Therefore, $f$ admits a finite-max composite form $f = g\circ F$, where $F:\RR^2\to\RR^2$ is the identity map, $g=\max\{g_1,g_2\}$, and $g_1(z_1,z_2) := z_2^2 - z_1^2 + 2z_1z_2$ and $g_2(z_1,z_2) := z_2^2 - z_1^2 - 2z_1z_2$, both $C^2$.
		To apply the branching criterion, we first obtain the locally minimal identifiable set $\sM$ at the critical point $\bar x=(0,0)$ via Proposition~\ref{prop:lmi-finite-max}. Both pieces and their gradients vanish at $\bar x$, so the active set $I(\bar z)$ at $\bar z = (0,0)$, the critical multiplier set $\Lambda_0$, and the family of active support sets $\mathscr{K}$ reduce to
		\[
		I(\bar z)=\{1,2\},
		\qquad
		\Lambda_0=\Delta_{\sI},
		\qquad
		\mathscr{K}=\{\{1\},\{2\},\{1,2\}\}.
		\]
		Since $g_1(z)-g_2(z)=4z_1z_2$, the active loci are $A_{\{1\}}=\{z_1z_2\ge0\}$, $A_{\{2\}}=\{z_1z_2\le0\}$, and $A_{\{1,2\}}=\{z_1z_2=0\}$. Together they cover $\RR^2$, so $\sE=\RR^2$ and $\sM=F^{-1}(\sE)=\RR^2$.
		
		We now identify the stratification of Proposition~\ref{prop:compatible-stratification}. The regions on which the active set is constant are
		\[
		\sR_{\{1\}}=\{z_1z_2>0\},
		\qquad
		\sR_{\{2\}}=\{z_1z_2<0\},
		\qquad
		\sR_{\{1,2\}}=\{z_1z_2=0\},
		\]
		on which the active piece is $g_1$, $g_2$, or both, respectively. Each is a union of strata: $\sR_{\{1\}}$ and $\sR_{\{2\}}$ split into their connected components, the open quadrants, while $\sR_{\{1,2\}}$ is the union of the two coordinate axes, which stratify into the four open half-axes and the origin. The resulting strata are the four open quadrants
		\[
		\sZ_1=\{z_1>0,\,z_2>0\},
		\sZ_2=\{z_1<0,\,z_2>0\},
		\sZ_3=\{z_1<0,\,z_2<0\},
		\sZ_4=\{z_1>0,\,z_2<0\},
		\]
		together with the four open half-axes, and the origin. The constant active sets are: $\{1\}$ on $\sZ_1$ and $\sZ_3$, $\{2\}$ on $\sZ_2$ and $\sZ_4$, and $\{1,2\}$ on the open half axes and the origin. We compare the two quadrants $\sZ_1$ and $\sZ_4$, with $\sI_{\sZ_1}=\{1\}$ and $\sI_{\sZ_4}=\{2\}$.
		
		To verify the implication required by Theorem~\ref{thm:branching-criterion}, suppose the activity hypothesis holds. Since $\sM=\RR^2$, the chart of Lemma~\ref{lem:chart-second-order} is $\gamma(v)=\bar x+v$, and we take $\sW=\RR^2$. Both quadrants are accessible from $\sM$ at $\bar x$ along $\sW$: the cones $\sC_1=\{v_1>0,\,v_2>0\}$ and $\sC_4=\{v_1>0,\,v_2<0\}$ are nonempty and relatively open, and for every unit vector $v\in \sC_1$ (resp.\ $v\in \sC_4$) and sufficiently small $t>0$ one has $F(\gamma(tv))=tv\in\sZ_1$ (resp.\ $\sZ_4$), which is the accessibility condition of Definition~\ref{def:accessible-stratum}. With $\gamma(v)=\bar x+v$, each piece is a homogeneous quadratic, so $\zeta_i(v)=g_i(v)$, and Lemma~\ref{lem:stratum-quadratic} identifies the branch quadratics
		\[
		\zeta_{\sZ_1,\sW}(v)=v_2^2-v_1^2+2v_1v_2,
		\qquad
		\zeta_{\sZ_4,\sW}(v)=v_2^2-v_1^2-2v_1v_2,
		\]
		which disagree on $\sW$. Thus the activity hypothesis implies the incompatible accessible branches required by Theorem~\ref{thm:branching-criterion}, and $\bar x$ is inactive.
	\end{example}

The example above is deliberately simple: inactivity can be verified by inspection, without invoking the branching criterion. The remainder of the paper demonstrates the criterion in two practically important problem classes: finite-scenario min--max optimization and robust low-rank recovery. The former models worst-case estimation across finitely many scenarios and provides a transparent first application: at a common interpolator, all scenario gradients vanish, so disagreement among their quadratic growth directly triggers the branching criterion. In the random overparameterized two-group model, every common interpolator is consequently an inactive global minimizer almost surely. Robust low-rank recovery, by contrast, seeks to recover a low-rank object from corrupted measurements and requires a substantially more involved reduction to a geometric cone-selection test and its probabilistic verification.

\section{Inactive Global Minimizers in Finite-Scenario Minimax Optimization}\label{sec:minimax-interpolation}

We consider the application of the branching criterion to finite-scenario minimax optimization~\cite{care2015scenario}, a standard instance of group distributionally robust optimization~\cite{sagawa2019distributionally}. Suppose the data are partitioned into $S$ prespecified groups, and let $R_s:\RR^d\to\RR^{m_s}$ denote the residual map for group $s$, which we assume to be $C^2$ and definable. We consider
\begin{equation}\label{eq:group-dro}
	f_{\mathrm{GDRO}}(x)
	:=
	\max_{s\in[S]}\ell_s(x),
	\qquad
	\ell_s(x):=\frac{1}{2}\|R_s(x)\|^2.
\end{equation}
Equivalently, $f_{\mathrm{GDRO}}(x)=\max_{p\in\Delta_{[S]}}\sum_{s=1}^S p_s\ell_s(x)$, matching the loss function of the group distributionally robust problem, wherein an adversary may choose any mixture of the empirical group distributions~\cite{sagawa2019distributionally}.
It has the finite-max composite form of Section~\ref{subsec:problem-formulation}: take \(F(x)=(R_1(x),\ldots,R_S(x))\) and, for \(z=(z_1,\ldots,z_S)\), define \(g_s(z):=\frac12\|z_s\|^2\) and \(g:=\max_{s\in[S]}g_s\).

We call \(\bar x\) a \emph{common interpolator} if \(R_s(\bar x)=0\) for every \(s\in[S]\). Such a point is a global minimizer of \(f_{\mathrm{GDRO}}\), and every smooth piece has zero gradient there. Nevertheless, the pieces may grow according to different quadratic forms around \(\bar x\). Such common interpolators arise naturally in modern overparameterized learning, where models often fit every training sample—and hence every prespecified group—simultaneously; see~\citep{zhang2016understanding,bartlett2020benign,sagawa2019distributionally}. 

The next result shows that whenever two different pieces dominate along different sets of directions, the common interpolator admits no $C^2$ identifiable manifold.
For \(\sJ_s[v]:=DR_s(\bar x)[v]\), define the scenario quadratic
\begin{equation}\label{eq:minimax-scenario-quadratic}
	q_s(v):=\frac12\left\|\sJ_s[v]\right\|^2,
	\qquad v\in\RR^d,
\end{equation}
and, for a linear subspace \(\sW\subseteq\RR^d\), the strict dominance cone
\[
\sD_s(\sW)
:=
\left\{
v\in\sW\setminus\{0\}:
q_s(v)>q_j(v)\ \text{for every }j\ne s
\right\}.
\]

\begin{theorem}[Inactive minimax interpolators]\label{thm:minimax-interpolation}
	Let \(\bar x\) be a common interpolator of \(f_{\mathrm{GDRO}}\). Suppose there are a linear subspace \(\sW\subseteq\RR^d\) and distinct indices \(s_1,s_2\in[S]\) such that both \(\sD_{s_1}(\sW)\) and \(\sD_{s_2}(\sW)\) are nonempty. Then \(\bar x\) is an inactive global minimizer of \(f_{\mathrm{GDRO}}\).
\end{theorem}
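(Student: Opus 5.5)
The plan is to verify the hypotheses of Theorem~\ref{thm:branching-criterion} directly. At a common interpolator everything collapses: $\sM=\RR^d$, and the branch quadratics are exactly the scenario quadratics $q_s$. Throughout, $\bar z:=F(\bar x)=0$.

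\emph{Step 1: global minimality and the identifiable set.} Each $\ell_s\ge0$ and $\ell_s(\bar x)=0$, so $f_{\mathrm{GDRO}}\ge 0=f_{\mathrm{GDRO}}(\bar x)$ and $\bar x$ is a global minimizer. At $\bar z=0$ every piece $g_s(z)=\frac12\|z_s\|^2$ vanishes, so $I(\bar z)=[S]$. Every gradient also vanishes, since $\nabla g_s(0)=0$. Hence the constraint in~\eqref{eq::Lambda0} is vacuous, giving $\Lambda_0=\Delta_{[S]}$ and $\mathscr{K}=\{\sK\subseteq[S]:\sK\neq\emptyset\}$. In particular each singleton $\{s\}\in\mathscr{K}$, so $\sE\supseteq\bigcup_s\sA_{\{s\}}=\RR^n$, because some piece is always active. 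Thus $\sM=F^{-1}(\sE)=\RR^d$ by Proposition~\ref{prop:lmi-finite-max}. Under the activity hypothesis this gives $T_{\bar x}\sM=\RR^d$, and we may take the chart $\gamma(v)=\bar x+v$ with $Z_{\sM}\equiv0$. Substituting $\nabla g_s(\bar z)=0$ and $\nabla^2 g_s(\bar z)=$ the projection onto block $s$ into~\eqref{eq:zeta-main} gives $\zeta_s(v)=\frac12\|\sJ_s[v]\|^2=q_s(v)$. (This is really a consequence, not an assumption: $\sM=\RR^d$ is a $C^2$ manifold outright. The activity hypothesis only adds that $f|_{\sM}$ is $C^2$, which is what we will contradict.)

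\emph{Step 2: accessibility.} Fix $s\in\{s_1,s_2\}$. The cone $\sD_s(\sW)$ is relatively open in $\sW$, being cut out by finitely many strict inequalities between continuous functions, and it is a cone because the $q_j$ are homogeneous of degree two. Take a unit $v\in\sD_s(\sW)$. Since $F(\bar x+tv)=t\,DF(\bar x)[v]+O(t^2)$, each piece satisfies $g_j(F(\bar x+tv))=t^2q_j(v)+o(t^2)$. The strict dominance $q_s(v)>q_j(v)$ then gives $I(F(\bar x+tv))=\{s\}$ for all $0<t<\eps(v)$, so the curve stays in $\sR_{\{s\}}$.

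The remaining task, and the main obstacle, is that $\sR_{\{s\}}$ may be split into several strata, while accessibility requires a \emph{single} stratum reached along an open cone. I would handle this with o-minimality. For each unit $v$, the set of $t\in(0,\eps(v))$ with $F(\bar x+tv)\in\sZ$ is definable in $t$, hence a finite union of points and intervals. Since the finitely many strata partition a neighborhood of the curve, exactly one stratum $\sZ(v)$ contains $F(\bar x+tv)$ for all sufficiently small $t>0$. The map $v\mapsto\sZ(v)$ is definable and takes finitely many values, so it partitions the relatively open set of unit vectors in $\sD_s(\sW)$ into finitely many definable pieces. One of these pieces must have nonempty relative interior, since a finite union of definable sets with empty interior has empty interior. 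Taking the cone over that interior yields a stratum $\sZ_s$ with $\sI_{\sZ_s}=\{s\}$ that is accessible along $\sW$ in the sense of Definition~\ref{def:accessible-stratum}, with an accessibility cone $\sC_s\subseteq\sD_s(\sW)$.

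\emph{Step 3: disagreement and conclusion.} By Lemma~\ref{lem:stratum-quadratic} and Step 1, $\zeta_{\sZ_{s_1},\sW}=q_{s_1}|_{\sW}$ and $\zeta_{\sZ_{s_2},\sW}=q_{s_2}|_{\sW}$. On the nonempty set $\sC_{s_1}\subseteq\sD_{s_1}(\sW)$ we have $q_{s_1}>q_{s_2}$, so the two branch quadratics differ. Thus conditions (a) and (b) of Theorem~\ref{thm:branching-criterion} hold under the activity hypothesis, and $\bar x$ is inactive. Combined with Step 1, $\bar x$ is an inactive global minimizer.
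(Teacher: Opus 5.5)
Your proof is correct. Steps 1 and 3 match the paper's argument: $\Lambda_0=\Delta_{[S]}$, $\sM=\RR^d$, chart $\gamma(v)=\bar x+v$, $\zeta_{\sZ_s,\sW}=q_s|_{\sW}$, then Theorem~\ref{thm:branching-criterion}. You diverge at the step you rightly flag as the obstacle, namely producing a \emph{single} accessible stratum.

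The paper sidesteps this by choosing the stratification. It declares every connected component of each open region $\sR_{\{s\}}$ to be a top-dimensional stratum, stratifying the complement compatibly with their closures. It then takes a connected relatively open patch $\sV_s$ of the unit sphere of $\sW$ with $\cl\sV_s\subset\sD_s(\sW)$. Compactness gives a uniform $\eps_s$ with $F(\bar x+tv)\in\sR_{\{s\}}$ on $\sV_s\times(0,\eps_s)$, and connectedness of that image places it in one component.

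You instead keep whatever compatible stratification Proposition~\ref{prop:compatible-stratification} delivers and use o-minimality twice. First, definable subsets of $(0,\eps)$ force each curve germ into a unique stratum $\sZ(v)$. Second, the finite definable partition $\{v:\sZ(v)=\sZ\}$ of the open set of unit directions in $\sD_s(\sW)$ must have a piece with nonempty interior.

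The comparison cuts both ways. The paper's route is more elementary (only compactness and connectedness) and yields a uniform $\eps_s$, but it must tailor the stratification, an existence claim the paper only sketches. Your route needs no modification of $\mathscr{Z}$ and only a pointwise $\eps(v)$, which is all Definition~\ref{def:accessible-stratum} requires. However, it relies on the strata being definable: this holds for Loi's stratification but is not recorded in the paper's statement, so say so explicitly. It also relies on o-minimal dimension theory, namely that a finite union of definable sets with empty interior has empty interior.

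One small omission: state explicitly that $\bar x$ is critical, e.g.\ $\partial f_{\mathrm{GDRO}}(\bar x)=\conv\{\nabla\ell_s(\bar x)\}=\{0\}$. The branching criterion and the definition of inactivity are both stated at critical points.
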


\begin{proof}
	Since every loss is nonnegative and vanishes at $\bar x$, the point $\bar x$ is a global minimizer. Moreover,
	\[
	\nabla\ell_s(\bar x)=\sJ_s^*[R_s(\bar x)]=0
	\qquad\text{for every }s\in[S].
	\]
	Since $f_{\mathrm{GDRO}}$ is a finite maximum of $C^2$ functions, it is locally Lipschitz and subdifferentially regular near $\bar x$, and the subdifferential formula of Section~\ref{subsec:problem-formulation} gives $\partial f_{\mathrm{GDRO}}(\bar x)=\conv\{\nabla\ell_s(\bar x):s\in[S]\}=\{0\}$; in particular, $\bar x$ is critical.
	
	We next identify the locally minimal identifiable set. At $\bar z=F(\bar x)=0$, every outer piece is active and $\nabla g_s(\bar z)=0$, so the critical multiplier set~\eqref{eq::Lambda0} is the entire simplex: $\Lambda_0=\Delta_{[S]}$. In particular, every singleton $\{s\}$ occurs as the support of a critical multiplier, and the corresponding active loci $\sA_{\{s\}}$ already cover the image space, because every $z$ has at least one active piece. Proposition~\ref{prop:lmi-finite-max} therefore gives
	\[
	\sE=\bigcup_{s\in[S]}\sA_{\{s\}}
	=\prod_{s=1}^S\RR^{m_s},
	\qquad
	\sM=F^{-1}(\sE)=\RR^d,
	\]
	To invoke Theorem~\ref{thm:branching-criterion}, we verify its accessibility and incompatibility conditions under the activity hypothesis. For each $s\in[S]$, let $\sR_{\{s\}}:=\{z:I(z)=\{s\}\}$ be the open region on which $g_s$ is uniquely active. Choose the compatible stratification in Proposition~\ref{prop:compatible-stratification} so that every connected component of each nonempty $\sR_{\{s\}}$ is a stratum; this is possible by taking these open components as the top-dimensional strata and stratifying their definable complement compatibly with their closures.
	
	Work under the activity hypothesis. Since $\sM=\RR^d$, we have $T_{\bar x}\sM=\RR^d$ and may take the local chart to be $\gamma(v)=\bar x+v$. Fix $s\in\{s_1,s_2\}$ and choose a unit vector $v_s\in\sD_s(\sW)$. Because $\sD_s(\sW)$ is relatively open, there is a connected relatively open neighborhood $\sV_s$ of $v_s$ in the unit sphere of $\sW$ such that $\cl{\sV_s}\subset\sD_s(\sW)$. Uniformly for $v\in\cl{\sV_s}$,
	\[
	g_s(F(\bar x+tv))-g_j(F(\bar x+tv))
	=t^2\bigl(q_s(v)-q_j(v)\bigr)+o(t^2),
	\qquad j\ne s.
	\]
	The strict inequalities on the compact set $\cl{\sV_s}$ therefore imply that, for some $\eps_s>0$,
	\[
	F(\bar x+tv)\in\sR_{\{s\}}
	\qquad
	\text{for all }v\in\sV_s\text{ and }0<t<\eps_s.
	\]
	The image of the connected set $\sV_s\times(0,\eps_s)$ under $(v,t)\mapsto F(\bar x+tv)$ is connected and hence lies in a single connected component $\sZ_s$ of $\sR_{\{s\}}$. Therefore, $\sZ_s$ is accessible from $\sM$ at $\bar x$ along $\sW$, with accessibility cone ${\sC}_s:=\{\alpha v:\alpha>0,\ v\in\sV_s\}$.
	
	It remains to compare the branch quadratics. Since $\nabla g_s(0)=0$ and $\nabla^2g_s(0)$ is the identity on the $s$-th group and zero on all other groups, definition~\eqref{eq:zeta-main} gives
	\[
	\zeta_{\sZ_s,\sW}(v)=\frac12\|\sJ_s[v]\|^2=q_s(v)
	\qquad\text{for every }v\in\sW.
	\]
	The two accessible strata $\sZ_{s_1}$ and $\sZ_{s_2}$ have incompatible branch quadratics: any $v\in\sD_{s_1}(\sW)$ satisfies $q_{s_1}(v)>q_{s_2}(v)$, and hence $q_{s_1}|_{\sW}\not\equiv q_{s_2}|_{\sW}$. Thus, under the activity hypothesis, conditions~\textup{(a)}--\textup{(b)} of Theorem~\ref{thm:branching-criterion} hold. That theorem rules out any $C^2$ identifiable manifold at $\bar x$, proving that $\bar x$ is inactive.
\end{proof}

For two scenarios, the condition above reduces to a familiar matrix test. Indeed, \(\sD_1(\sW)\) and \(\sD_2(\sW)\) are both nonempty exactly when \(\sJ_1^*\sJ_1-\sJ_2^*\sJ_2\) is indefinite on \(\sW\). A useful sufficient condition is that the two kernels are incomparable: there exist \(v_1\in\ker \sJ_2\setminus\ker \sJ_1\) and \(v_2\in\ker \sJ_1\setminus\ker \sJ_2\). Then \(q_1(v_1)>q_2(v_1)\) and \(q_2(v_2)>q_1(v_2)\).

We next show that this kernel geometry is generic in overparameterized two-group linear regression. Given \(A_s\in\RR^{m_s\times d}\) and \(b_s\in\RR^{m_s}\), consider
\begin{equation}\label{eq:two-group-minimax-regression}
	f_{\mathrm{2G}}(x)
	:=
	\max_{s\in[2]}
	\frac{1}{2m_s}\|A_sx-b_s\|^2.
\end{equation}

\begin{corollary}[Gaussian two-group minimax regression]\label{cor:random-minimax-regression}
	Let \(A_1\in\RR^{m_1\times d}\) and \(A_2\in\RR^{m_2\times d}\) have i.i.d. standard normal entries, and suppose \(m_1,m_2\geq1\) and \(m_1+m_2\leq d\). Then, with probability one, for every \(b_1\in\RR^{m_1}\) and \(b_2\in\RR^{m_2}\), the common interpolators $\{\bar x\in\RR^d:A_1\bar x=b_1,\ A_2\bar x=b_2\}$ form a nonempty affine subspace of dimension $d-m_1-m_2$ and every one of them is an inactive global minimizer of $f_{\mathrm{2G}}$.
\end{corollary}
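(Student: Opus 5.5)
The plan is to reduce the corollary to Theorem~\ref{thm:minimax-interpolation}, using the kernel-incomparability sufficient condition stated right after it. Write \(f_{\mathrm{2G}}\) in the form~\eqref{eq:group-dro} with residual maps \(R_s(x):=m_s^{-1/2}(A_sx-b_s)\). Each \(R_s\) is affine, hence \(C^2\) and semialgebraic, and \(DR_s(\bar x)=\sJ_s=m_s^{-1/2}A_s\) for every \(\bar x\). In particular \(\ker\sJ_s=\ker A_s\) does not depend on \(\bar x\) or on \(b\). The common interpolators are exactly the common zeros of \(R_1,R_2\), i.e.\ the solutions of the stacked system \(A\bar x=b\) with \(A:=\begin{bmatrix}A_1\\ A_2\end{bmatrix}\in\RR^{(m_1+m_2)\times d}\).

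\textbf{Step 1 (probability-one event).} Let \(\Omega\) be the event that \(A\) has full row rank \(m_1+m_2\). Since \(m_1+m_2\le d\), the complement of \(\Omega\) is the common zero set of all \((m_1+m_2)\)-minors. This set is a proper algebraic subset of the space of \((A_1,A_2)\), because e.g.\ \(A=[I\;0]\) avoids it. A proper algebraic set has Lebesgue measure zero, so \(\PP(\Omega)=1\) under the Gaussian law. On \(\Omega\), for every \(b=(b_1,b_2)\) the system \(A\bar x=b\) is consistent, and its solution set is an affine subspace of dimension \(d-m_1-m_2\). This proves nonemptiness and the dimension claim, simultaneously for all \(b\).

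\textbf{Step 2 (kernel incomparability on \(\Omega\)).} On \(\Omega\), the rows of \(A_1\) and \(A_2\) are jointly linearly independent. Hence \(\ker A\subsetneq\ker A_2\), since \(\dim\ker A_2=d-m_2>d-m_1-m_2\) as \(m_1\ge1\). Pick \(v_1\in\ker A_2\setminus\ker A\); then \(v_1\in\ker\sJ_2\setminus\ker\sJ_1\). Symmetrically, using \(m_2\ge1\), pick \(v_2\in\ker\sJ_1\setminus\ker\sJ_2\). Then \(q_1(v_1)>0=q_2(v_1)\) and \(q_2(v_2)>0=q_1(v_2)\). Taking \(\sW=\RR^d\), both \(\sD_1(\sW)\) and \(\sD_2(\sW)\) are nonempty. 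With only two scenarios, strict dominance over \(j\ne s\) is a single inequality, so this suffices.

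\textbf{Step 3 (conclusion).} For any \(b\) and any common interpolator \(\bar x\), Theorem~\ref{thm:minimax-interpolation} applies with \(s_1=1\), \(s_2=2\), and \(\sW=\RR^d\). Hence \(\bar x\) is an inactive global minimizer of \(f_{\mathrm{2G}}\).

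The only point needing care is the quantifier order ``with probability one, for every \(b\)''. It is handled because \(\Omega\) depends only on \((A_1,A_2)\), while the scenario quadratics \(q_s\) are independent of \(b\) and \(\bar x\). The main technical step is therefore just the measure-zero argument for rank deficiency in Step~1, which is standard.
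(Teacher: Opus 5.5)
Your proposal is correct and follows the paper's proof. Almost-sure full row rank of the stacked Gaussian matrix gives existence and dimension $d-m_1-m_2$ of the interpolator set for all $(b_1,b_2)$ at once, and kernel incomparability then makes both dominance cones nonempty, so Theorem~\ref{thm:minimax-interpolation} applies with $\sW=\RR^d$. The differences are cosmetic: you obtain incomparability by a dimension count ($\dim\ker A_2=d-m_2>\dim\ker A$) instead of the paper's row-span argument, and you spell out the measure-zero and quantifier-order details that the paper leaves implicit.
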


\begin{proof}
	The stacked matrix \([A_1^\top \; A_2^\top]^\top\) has i.i.d. standard normal entries and $m_1+m_2$ rows. Since $m_1+m_2\leq d$, it has full row rank almost surely. Hence the two systems \(A_1\bar x=b_1\) and \(A_2\bar x=b_2\) admit a common interpolator for every \((b_1,b_2)\), and its solution set is an affine subspace of dimension $d-m_1-m_2$. Full row rank also implies that neither \(\ker A_1\) nor \(\ker A_2\) contains the other: either inclusion would force the rows of one nonempty block to lie in the row span of the other, contradicting full row rank of the stack. Consequently, there exist
	\[
	v_1\in\ker A_2\setminus\ker A_1,
	\qquad
	v_2\in\ker A_1\setminus\ker A_2.
	\]
	At any common interpolator, the scenario Jacobians in~\eqref{eq:minimax-scenario-quadratic} are \(\sJ_s=A_s/\sqrt{m_s}\). Thus \(q_1(v_1)>q_2(v_1)\) and \(q_2(v_2)>q_1(v_2)\), and Theorem~\ref{thm:minimax-interpolation} applies with \(\sW=\RR^d\).
\end{proof}

The mechanism here is particularly transparent. At a common interpolator, all scenario gradients vanish, so the locally minimal identifiable set fills the ambient space. Nevertheless, different scenarios dominate on different open cones and induce incompatible quadratic expansions, precluding a single \(C^2\) local model. The inactive points identified here are global minimizers of a natural finite-max problem. By contrast, the next section turns to robust low-rank recovery, where the identified inactive critical points can be strict saddles rather than global/local minimizers.

\section{Inactive Criticality in Robust Low-Rank Recovery}\label{sec:low-rank-specialization}
We next consider the class of \emph{robust low-rank recovery} problems
\begin{equation}\label{eq:recovery-problem}
\min_{X\in \SS^n}\ \left\|\sA\left(\sL\left(X\right)\right)-y\right\|_1
\quad\text{s.t.}\quad X\succeq 0,\quad \rank(X)\leq k,
\end{equation}
for which the branching criterion takes a particularly geometric form. Here, $\sA:\mathbb{S}^n\to\mathbb{R}^m$ is the linear \emph{measurement operator} defined by $[\sA(Z)]_i=\langle A_i,Z\rangle$ for $i=1,\ldots,m$, where $\{A_i\}_{i=1}^m\subset\mathbb{S}^n$ are known measurement matrices. The linear \emph{design operator} $\sL:\SS^n \to\SS^n$ is determined by the underlying generative model, such as matrix sensing or nonnegative sparse regression. The observations satisfy $y=\sA(\sL(X^\star))+\varepsilon$, where $X^\star\succeq0$ is the unknown ground-truth matrix with $\rank(X^\star)=r\leq k$, and $\varepsilon\in\mathbb{R}^m$ is a sparse vector of outliers. The $\ell_1$ loss provides robustness against these outliers.

Instances of~\eqref{eq:recovery-problem} arise in a wide range of robust statistical recovery applications, including signal processing, recommendation systems, and control; see the surveys by~\citet{wright2022high,chi2019nonconvex}.

Problem~\eqref{eq:recovery-problem} is often solved via the \emph{Burer--Monteiro} factorization~\citep{burer2003nonlinear}, which replaces the constraint set $\{X:X\succeq 0,\,\rank(X)\leq k\}$ with $\{UU^\top:U\in\mathbb{R}^{n\times k}\}$, reducing~\eqref{eq:recovery-problem} to
\begin{equation}\label{eq:recovery-problem-BM}
\min_{U\in\mathbb{R}^{n\times k}}\ \left\|\sA\left(\sL\left(UU^\top\right)\right)-y\right\|_1,
\end{equation}
which is in the finite-max composite form considered in this paper. 
Despite its nonconvex and nonsmooth nature, local-search algorithms are known to converge to the ground truth $X^\star = U^\star{U^\star}^\top$ when applied directly to~\eqref{eq:recovery-problem-BM}. For instance, the subgradient method converges to $X^\star$ for matrix sensing even when $\varepsilon\neq 0$ and the model is severely \emph{overparameterized}, i.e., the search rank dominates the true rank ($k\gg r$)~\citep{ma2023global,diaz2025preconditioned}. Analogous results hold for nonnegative sparse regression~\citep{ma2022blessing,diaz2025preconditioned}.

\citet{ma2025can} showed that the (balanced) ground-truth solutions from $\{U^\star:U^\star{U^\star}^\top=X^\star\}$ in matrix sensing emerge not as local or global minima, but as strict saddles. The convergence of the subgradient method to these points (or equivalently, its inability to escape them) therefore cannot be attributed to local optimality, leading the authors to conjecture that these strict saddles are inactive. In this work, we confirm this conjecture and prove a considerably stronger result:

\begin{quote}
\itshape Under appropriate randomness in the measurements and noise, and with overparameterization, every (balanced) critical true solution in $\{U^\star:U^\star{U^\star}^\top=X^\star\}$ satisfies the branching criterion (Theorem~\ref{thm:branching-criterion}) with high probability. Consequently, these critical true solutions are inactive, admitting no $C^2$ identifiable manifold; in regimes where they are strict saddles, they become inactive strict saddles.
\end{quote}

We formally present our results for two representative classes of robust low-rank recovery: nonnegative sparse regression and matrix sensing.

\medskip
\noindent\emph{\bf Nonnegative sparse regression:}
The goal of nonnegative sparse regression is to recover a nonnegative vector $x^\star\in\mathbb{R}^n_+$ with at most $s\ll n$ nonzero entries from linear measurements $y_i=\inp{a_i}{x^\star}+\varepsilon_i$, where $\varepsilon\in\mathbb{R}^m$ is a sparse outlier vector. A common approach is to introduce the parametrization $x=u\odot u$, where $\odot$ denotes the elementwise product, and apply the subgradient method directly to
\begin{equation}\label{eq::sp-loss}
f_{\mathrm{sp}}(u)
:=\sum_{i=1}^m
\left|\inp{a_i}{u\odot u}-y_i\right|.
\end{equation}
Nonnegative sparse regression is a special case of~\eqref{eq:recovery-problem-BM}: take $k=1$, set $A_i=\diag(a_i)$, where $\diag(a_i)$ denotes the diagonal matrix with diagonal $a_i$, and let $\sL$ be the projection onto the diagonal matrices. Applying our branching criterion under Gaussian measurements yields the following high-probability guarantee.

\begin{theorem}[Nonnegative sparse regression]\label{thm:nonnegative-sparse}
Consider the nonnegative sparse regression problem~\eqref{eq::sp-loss}. Suppose that the measurement vectors $a_1,\ldots,a_m$ have i.i.d. standard normal entries, and that the outlier vector $\varepsilon$ is independent of the measurement vectors and satisfies $\|\varepsilon\|_0\leq pm$, where $0\leq p<1/2$. Let $x^\star\in\mathbb{R}^n_+$ satisfy $\|x^\star\|_0=s$ and $n\geq s+2$. Then there exist constants $c_1,c_2>0$, depending only on $p$, such that, whenever $m\geq c_1s$, with probability at least
\[
1-e^{-c_2s}-2^{-(n-s-1)},
\]
every ground-truth factor $u^\star$ satisfying $u^\star\odot u^\star=x^\star$ is an inactive critical point of $f_{\mathrm{sp}}$.
\end{theorem}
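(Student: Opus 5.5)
The plan is to write $f_{\mathrm{sp}}=g\circ F$ with $g=\|\cdot\|_1$ and $F(u)=\sA(\sL(uu^\top))-y=A(u\odot u)-y$, where $A\in\RR^{m\times n}$ has rows $a_i^\top$. Let $S=\supp x^\star$ with $|S|=s$, $T=S^c$ with $|T|=n-s\ge 2$, and let $O=\supp\varepsilon$. At any ground-truth factor $u^\star$ we have $\bar z=F(u^\star)=-\varepsilon$. By Lemma~\ref{lem:orthogonal-invariance}, applied with the sign-flip isometries $u\mapsto D u$ ($D$ diagonal with $\pm1$ entries, which preserve $f_{\mathrm{sp}}$), it suffices to treat the single factor $u^\star=\sqrt{x^\star}$. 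We compute
\[
DF(u^\star)[v]=2A(u^\star\odot v),\qquad D^2F(u^\star)[v,v]=2A(v\odot v).
\]
Hence $DF(u^\star)$ sees only $v_S$, and the multiplier slice of Example~\ref{ex:l1-lmi} is
\[
\Lambda=\{\lambda\in[-1,1]^m:\ \lambda_O=\sign(-\varepsilon_O),\ (A^\top\lambda)_S=0\}.
\]

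The first step is criticality, namely $\Lambda\neq\varnothing$. This is a standard dual-certificate statement for $\ell_1$ regression with a fraction $p<1/2$ of outliers. We need a $\lambda$ with prescribed signs on $O$, entries in $[-1,1]$ off $O$, and $A_S^\top\lambda=0$, where $A_S$ is $m\times s$ Gaussian. I would take $\lambda_O$ fixed and solve for $\lambda_{O^c}$ as the minimum-norm solution of $A_{O^c,S}^\top\lambda_{O^c}=-A_{O,S}^\top\lambda_O$, then control its $\ell_\infty$ norm. Since $\varepsilon$ is independent of $A$ and $(1-2p)m\gtrsim s$, the imbalance $A_{O,S}^\top\lambda_O$ is of order $\sqrt{pm\,s}$ while $A_{O^c,S}$ is well conditioned. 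This gives success with probability $1-e^{-c_2 s}$ once $m\ge c_1 s$, with $c_1,c_2$ depending on $p$.

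The second step is the branching structure along $\sW:=\{v:v_S=0\}=\RR^T$. Because $DF(u^\star)$ annihilates $\sW$, the curve $u^\star+tw$ with $w\in\sW$ gives exactly
\[
F(u^\star+tw)=-\varepsilon+t^2A_T(w_T\odot w_T).
\]
Moreover, every $\zeta_i$ in \eqref{eq:zeta-main} restricted to $\sW$ is $\inp{\nabla g_i}{A_T(w\odot w)}$, up to the chart term $\eta_{\sM}$. For a vertex sign pattern $\sigma$ this becomes $\sum_{j\in T}w_j^2\,(A^\top\sigma)_j$. The goal is then to exhibit, under the activity hypothesis, two strata (two sign cones $\mathcal{S}_{\lambda_1},\mathcal{S}_{\lambda_2}$ of multipliers in $\Lambda$) reached along open cones in $\sW$ with different diagonal quadratics $w\mapsto\sum_{j\in T}w_j^2(A^\top\lambda_\ell)_j$. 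I would first show $\sW\subseteq T_{u^\star}\sM$. For this I would argue that, for each $j\in T$, the ray direction $a^j$ (the $j$th column restricted to $O^c$) lies, together with the fixed signs on $O$, in some sign cone $\mathcal{S}_\lambda$, $\lambda\in\Lambda$. The reason is that $\Lambda$ is a slice of the cube whose free directions include the coordinates $O^c$, and the sign pattern of $a^j_{O^c}$ can be realized by a multiplier once one adds a correction in $\ker A_S^\top$. Consequently $u^\star+te_j\in\sM$ in both signs of $t$, so $\pm e_j$ are tangent. Then, on an open cone around $e_j$ in $\sW$, the point $F(u^\star+tw)$ stays in the stratum containing the ray through $-\varepsilon+A_T(w\odot w)$. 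The branch quadratic there evaluates at $e_j$ to $\|a^j\|_1$-type quantities weighted by the selected $\lambda$.

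The third step, which I expect to be the main obstacle, is verifying condition (b) of Theorem~\ref{thm:branching-criterion}. I need two indices $j\neq k$ in $T$ whose accessible strata carry genuinely different forms on $\sW$. Concretely, the coefficient vectors $(A^\top\lambda_j)_T$ and $(A^\top\lambda_k)_T$ must differ in at least one coordinate in a way not absorbed by the common tangential term coming from $\eta_{\sM}$ (that term contributes the same linear functional of $A_T(w\odot w)$ for all pieces through the normal part $Z_{\sM}$, so it cancels in differences). Conditional on $A_S$ and $\varepsilon$, the columns $a^j$, $j\in T$, are i.i.d. symmetric Gaussians. I would show the forms coincide only if a certain sign of each column (for instance the sign of $\inp{a^j}{\bar\lambda}$ for a fixed reference $\bar\lambda\in\relint\Lambda$) agrees across all $j\in T$ relative to the first. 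By symmetry of $a^j\mapsto -a^j$ this event has probability $2^{-(n-s-1)}$. A union bound with the first step yields the claimed probability $1-e^{-c_2 s}-2^{-(n-s-1)}$. Pinning down exactly which sign statistic governs the selection of strata, and checking that accessibility holds uniformly on an open cone rather than along single rays, is where I expect the real work. I would attack it by reducing to the later cone-selection test of Theorem~\ref{thm:spectrahedral-deterministic} with $k=1$ and diagonal $\sL$, if that reduction is available.
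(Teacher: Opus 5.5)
\textbf{There is a genuine gap: the proposal's own argument fails in Steps 2 and 3, and the fallback you name at the end is the paper's proof but is not carried out.}

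\textbf{Step 2 rests on a false claim.} Let $a^j_C:=(a_{\ell j})_{\ell\in\Omega_C}$ be the clean part of column $j\in T$. The straight line gives $F_C(u^\star+te_j)=t^2a^j_C$, a Gaussian vector with all entries nonzero. Every sign cone $\mathcal{S}_v$, $v\in\Lambda_C$, lies in a coordinate hyperplane. A multiplier with no free coordinate would be a $\pm1$ vector $\sigma$ solving the $s$ equations $A_{C,S}^\top\sigma=A_{N,S}^\top\sign(\varepsilon_N)$, which has probability zero. Under (N), every vertex has exactly $s$ free coordinates and $\sE_C$ has dimension $m_C-s<m_C$. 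Consequences:
\begin{itemize}
\item The sign pattern of $a^j_C$ is not realized by any multiplier, and adding elements of $\ker A_S^\top$ cannot change that.
\item Hence $u^\star+te_j\notin\sM$.
\item ``The stratum containing the ray through $A_T(w\odot w)$'' is undefined.
\end{itemize}
The missing idea is that curves on $\sM$ along $w\in\ker\sJ_C$ need an $O(t^2)$ correction in the $S$-coordinates, i.e.\ along $\sR_C=\Im\sJ_C$. With it, $F_C(\gamma(tw))=t^2\sS_{\Lambda_C}(\sQ_C(w))+o(t^2)$, where $\sS_{\Lambda_C}(a)$ is the unique point of $(a+\sR_C)\cap\sE_C$ (Proposition~\ref{prop:unique-slice}). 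The accessed stratum is $\relint\mathcal{S}_\nu$ for the vertex $\nu$ with $P_{\sN_C}\sQ_C(w)\in\relint N^{\sN_C}_{\Lambda_C}(\nu)$ (Lemmas~\ref{lem:branch-correspondence} and~\ref{lem:second-order-accessibility}). The identity $T_{u^\star}\sM=\ker\sJ_C$ is itself proved with such corrected curves (Lemma~\ref{lem:tangent-clean-null-equality}).

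\textbf{Assumption~\ref{ass:affine-cube} is missing.} All of the above needs it, and your Step 1 only gives $\Lambda_C\neq\varnothing$.
\begin{itemize}
\item Condition (I) makes $\Lambda_C$ full-dimensional in $\bar v+\sN_C$, so its vertex normal cones are pointed.
\item Condition (N) makes the slice $(a+\sR_C)\cap\sE_C$ a single point.
\item Both need their own probabilistic argument. The paper's Proposition~\ref{prop:interior-point-event} uses whitening, the bound $\inf_{\|u\|=1}\|\widetilde{\sJ}_C[u]\|_1\gtrsim m_C$, and a general-position argument.
\end{itemize}
Incidentally, controlling the $\ell_\infty$ norm of your minimum-norm certificate costs a $\sqrt{\log m}$ factor. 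It would only give $m\gtrsim s\log m$; the $\ell_1$ dual argument gives the linear rate.

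\textbf{Step 3 uses the wrong statistic.} The test fails only if all $P_{\sN_C}a^j_C$, $j\in T$, fall in the relative interior of a single vertex normal cone, and that vertex is random. A pointed cone lies in a half-space whose normal depends on $\nu$, so there is no fixed $\bar\lambda$ whose signs must agree. A union bound over vertices would lose a factor $|\operatorname{Vert}(\Lambda_C)|$, which can be exponential.

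What works is your own (correct) conditioning. Given $(A_S,\varepsilon)$, hence given $(\sJ_C,\Lambda_C)$, the vectors $P_{\sN_C}a^j_C$ are i.i.d.\ isotropic Gaussians on $\sN_C$. So the failure probability is $\sum_\nu\alpha_\nu^{q}\le(\max_\nu\alpha_\nu)^{q-1}\le2^{-(q-1)}$, where $\alpha_\nu\le\frac12$ is the spherical measure of the pointed cone $N^{\sN_C}_{\Lambda_C}(\nu)$ (Proposition~\ref{lem:isotropic-branch-occupancy}). Pointedness is exactly where (I) enters.

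\textbf{Your branch-quadratic formula is salvageable} if you use the multiplier rather than the piece.
\begin{itemize}
\item For a sign pattern $\sigma$, the chart term $\inp{\sigma}{DF(u^\star)[Z_{\sM}(w)]}$ does not cancel, since $DF(u^\star)^*\sigma\neq0$.
\item Once $\eta_C(w)\in\relint\mathcal{S}_\nu$, however, $\inp{\sigma}{\eta_{\sM}(w)}=\inp{\lambda}{\eta_{\sM}(w)}$ with $\lambda=(\nu,-\sign(\varepsilon_N))$, and $DF(u^\star)^*\lambda=0$.
\item So the branch quadratic is $w\mapsto\sum_{k\in T}w_k^2(A^\top\lambda)_k$.
\item Two selected vertices give distinct forms by the strict H\"older inequality $\inp{\nu_1-\nu_2}{\eta_C(w)}>0$ on $\sC_1$.
\end{itemize}
The fallback in your last sentence, the cone-selection test of Theorem~\ref{thm:spectrahedral-deterministic} with $V_a=e_{i_a}$, is precisely the paper's proof. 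Your plan, however, supplies neither (I)--(N) nor the pointed-cone counting that make that route work.
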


Two features of the theorem are worth emphasizing. First, the assumption $n-s\geq2$ ensures the existence of at least two coordinates outside the support of $x^\star$. These off-support coordinates provide the directions used by the branching criterion to construct incompatible branch quadratics with high probability. The presence of off-support coordinates is not merely a technical requirement; it is, in fact, necessary. Indeed, when $n=s$, an argument analogous to that of~\citet[Theorem~V.2]{aravkin2020global} shows that $f_{\mathrm{sp}}$ is locally sharp away from the singleton $\sM=\{u^\star\}$.\footnote{That is, there exist $\mu>0$ and a neighborhood $\sU$ of $u^\star$ such that $f_{\mathrm{sp}}(u)\geq f_{\mathrm{sp}}(u^\star)+\mu\,\dist(u,\sM)$ for every $u\in\sU$.} Together with the weak convexity of $f_{\mathrm{sp}}$, this sharpness implies that $\sM$ is a locally minimal identifiable set~\citep[Proposition~8.4]{drusvyatskiy2014optimality}. Since a singleton is a $C^2$ manifold, $u^\star$ is therefore an active critical point when $n=s$.
Second, the sample-size requirement $m\geq c_1s$ is information-theoretically optimal up to constant factors. Thus, the order of measurements required to make recovery information-theoretically possible is already sufficient to ensure that the ground-truth factors are inactive critical points with high probability.

\medskip
\noindent\emph{\bf Matrix sensing:}
Matrix sensing is the matrix analogue of nonnegative sparse regression, with low rank playing the role of sparsity in the spectral domain. The goal is to recover a rank-$r$ matrix $X^\star\in\mathbb{R}^{n_1\times n_2}$ from linear measurements $y_i=\inp{\Gamma_i}{X^\star}+\varepsilon_i$, where $\Gamma_i\in\mathbb{R}^{n_1\times n_2}$ have i.i.d. standard normal entries, and $\varepsilon\in\mathbb{R}^m$ is a sparse outlier vector. A common approach is to introduce a low-rank factorization and apply the subgradient method directly to one of the factorized objectives
\begin{align}
	f_{\mathrm{sym}}(U)
	&:=\sum_{i=1}^m
	\left|\inp{\Gamma_i}{UU^\top}-y_i\right|,
	&& U\in\mathbb{R}^{n\times k}, \label{eq::sym-MS}\\
	f_{\mathrm{asym}}(U_1,U_2)
	&:=\sum_{i=1}^m
	\left|\inp{\Gamma_i}{U_1U_2^\top}-y_i\right|,
	&& (U_1,U_2)\in
	\mathbb{R}^{n_1\times k}\times\mathbb{R}^{n_2\times k},\label{eq::asym-MS}
	\end{align}
	where $k\geq r$. The symmetric formulation applies when $n_1=n_2=n$ and $X^\star$ is symmetric and positive semidefinite, whereas the asymmetric formulation accommodates general, possibly rectangular, matrices.
	Both formulations are special cases of~\eqref{eq:recovery-problem-BM}. In the symmetric setting, it suffices to take $A_i=\frac{1}{2}(\Gamma_i+\Gamma_i^\top)$ and let $\sL$ be the identity map. For the asymmetric setting, set $n:=n_1+n_2$ and introduce the stacked factor $U:=[U_1^\top\;U_2^\top]^\top\in\mathbb{R}^{n\times k}$. Define
	\[
	A_i:=\frac{1}{2}
	\begin{bmatrix}
0 & \Gamma_i \\
\Gamma_i^\top & 0
\end{bmatrix},
\qquad
\sL(X):=
\begin{bmatrix}
0 & X_{12} \\
X_{12}^\top & 0
\end{bmatrix},
\]
where $X_{12}\in\mathbb{R}^{n_1\times n_2}$ denotes the upper-right off-diagonal block of $X\in\mathbb{S}^n$. Since the upper-right off-diagonal block of $UU^\top$ is $U_1U_2^\top$, we have
$\inp{A_i}{\sL(UU^\top)}=\inp{\Gamma_i}{U_1U_2^\top}$, which establishes the claimed reduction. Applying our branching criterion to the symmetric and asymmetric formulations, again under Gaussian measurements, yields the following high-probability guarantees.

\begin{theorem}[Symmetric matrix sensing]\label{thm:sym-MS}
Consider the symmetric matrix sensing problem~\eqref{eq::sym-MS}. Suppose that the measurement matrices $\Gamma_1,\ldots,\Gamma_m$ have i.i.d. standard normal entries, and that the outlier vector $\varepsilon$ is independent of the measurement matrices and satisfies $\|\varepsilon\|_0\leq pm$ for some $0\leq p<1/2$. Let $X^\star\succeq 0$ have rank $r$, and suppose that $k\geq r+1$ and $n\geq r+2$. Then there exist constants $c_1,c_2>0$, depending only on $p$, such that, whenever $m\geq c_1nr$, with probability at least
\[
1-e^{-c_2nr}-2^{-(n-r-1)},
\]
every ground-truth factor $U^\star$ satisfying $U^\star {U^\star}^{\top}=X^\star$ is an inactive critical point of $f_{\mathrm{sym}}$.
\end{theorem}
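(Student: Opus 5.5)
We plan to verify the hypotheses of Theorem~\ref{thm:branching-criterion} for $f_{\mathrm{sym}}=g\circ F$, with $g=\|\cdot\|_1$ and $F(U)=\sA(UU^\top)-y$. Here $\sA(Z)=(\inp{A_i}{Z})_i$ and $A_i=\tfrac12(\Gamma_i+\Gamma_i^\top)$.

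\textbf{Reduction to one factor.} All ground-truth factors $U^\star$ with $U^\star{U^\star}^\top=X^\star$ differ by right multiplication by an orthogonal $Q\in\RR^{k\times k}$. Since $f_{\mathrm{sym}}(UQ)=f_{\mathrm{sym}}(U)$, Lemma~\ref{lem:orthogonal-invariance} lets us work with a single representative $U^\star=[U_r\;0]$, where $U_r\in\RR^{n\times r}$ has full column rank. Because $k\ge r+1$, this representative has at least one zero column. Consequently a single high-probability event covers every ground-truth factor.

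\textbf{Identifiable set.} At $U^\star$ we have $\bar z=F(U^\star)=-\varepsilon$. Let $O=\supp\varepsilon$ be the outlier set and $O^c$ the inlier set. Example~\ref{ex:l1-lmi} gives $\sM=F^{-1}(\sE)$, with $\sE=\bigcup_{v\in\Lambda}\mathcal S_v$ and
\[
\Lambda=\Bigl\{v\in[-1,1]^m:\ v_O=-\sign(\varepsilon_O),\ \textstyle\sum_i v_iA_iU_r=0\Bigr\}.
\]

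\textbf{Probabilistic step 1: criticality.} We first show that, with probability at least $1-e^{-c_2nr}$, the set $\Lambda$ is nonempty, so $U^\star$ is critical. Since $\Gamma$ is Gaussian and independent of $\varepsilon$ with $p<1/2$, $m\ge c_1nr$ gives the standard $\ell_1/\ell_2$ restricted isometry and sharpness on rank-$2r$ matrices. We then produce a dual certificate in the usual way: set $v_O=-\sign(\varepsilon_O)$ and correct on the inlier coordinates by a least-norm solution of the linear constraint $\sum_iv_iA_iU_r=0$. The inliers carry at least a $(1-p)$ fraction of the coordinates, which keeps the correction inside $(-1,1)^{O^c}$.

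\textbf{Deterministic step: accessible branches.} Assume the activity hypothesis and use the directions $V=w\,e_k^\top$ with $w\perp\col U_r$. For these, $U^\star V^\top=0$, so
\[
F(U^\star+tV)=-\varepsilon+t^2\sA(ww^\top),
\]
and the first-order term vanishes. If the relevant directions of $\sW=\{w e_k^\top: w\in(\col U_r)^\perp\}$ lie in $T_{U^\star}\sM$, then $\eta_{\sM}(V)=\sA(ww^\top)+DF[Z_{\sM}(V)]$. Each branch quadratic has the form $\zeta_\sigma(V)=\inp{\sigma}{\sA(ww^\top)}+\inp{DF(U^\star)^*\sigma}{Z_{\sM}(V)}$, for the sign pattern $\sigma$ selected by the stratum.

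Two strata whose sign patterns $\sigma,\sigma'$ differ on a set $S$ of inlier coordinates differ by $2\sum_{i\in S}\sigma_i\,w^\top A_iw$, up to curvature terms. We will kill those curvature terms by choosing $\sigma,\sigma'$ inside $\Lambda$, so that $DF^*\sigma=DF^*\sigma'=0$ and the $Z_{\sM}$ term vanishes. The resulting difference is a nonzero quadratic form in $w$ on the $(n-r)$-dimensional space $(\col U_r)^\perp$, almost surely, because $n-r\ge2$ and the $A_i$ are Gaussian.

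What remains is accessibility, which we reduce to a \emph{cone-selection test}. We need two multipliers $v,v'\in\Lambda$ and open cones $\sC,\sC'$ of $w$'s on which $\sA(ww^\top)$, together with the second-order correction from $\gamma$, lands in the relative interiors of the sign cones $\mathcal S_v$ and $\mathcal S_{v'}$ respectively. Connectivity then places each image in a single stratum, exactly as in the proof of Theorem~\ref{thm:minimax-interpolation}.

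\textbf{Probabilistic step 2 (main obstacle).} The hardest step is showing that the cone-selection test succeeds with probability at least $1-2^{-(n-r-1)}$. The difficulty is that a fully saturated multiplier with a prescribed inlier sign pattern generically fails the linear constraint $\sum_iv_iA_iU_r=0$. So the accessible strata must be those in which many inlier coordinates of $F$ stay at zero, and the manifold's curvature $Z_{\sM}$ must be what keeps them there.

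Our first attempt will be to work in the quotient by $\Span\{A_iU_r\}$: project the inlier measurement matrices onto $(\col U_r)^\perp$ and use rotational invariance of the Gaussian ensemble there. The existence of two incompatible admissible sign selections would then reduce to a symmetric sign event for a Gaussian vector in dimension $n-r-1$. Such an event fails with probability at most $2^{-(n-r-1)}$, which would match the stated bound. A union bound with step 1 then yields the theorem.
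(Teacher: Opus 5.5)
You have the right skeleton: reduce to one factor by Lemma~\ref{lem:orthogonal-invariance}, use directions $V=we_k^\top$ with $w\perp\col U_r$ so the first-order term vanishes, then compare branch quadratics. There are, however, two genuine gaps.

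\textbf{Accessibility is not proved.} It is the step that carries the theorem, and you leave it as an unexecuted ``first attempt.'' You correctly diagnose the obstacle: the unknown curvature $Z_{\sM}(V)$ decides which inlier coordinates of $F$ stay at zero. But you never resolve it.

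The missing idea is that this curvature is pinned down without knowing $\sM$. On the clean block, $\eta_C(V)=\sQ_C(V)+\sJ_C[Z_{\sM}(V)]$ lies both in the closed cone $\sE_C$ and in the affine space $\sQ_C(V)+\Im\sJ_C$. This slice of $\sE_C$ is a single point provided two conditions hold: $\Lambda_C$ meets $(-1,1)^{m_C}$, and every vertex $\nu$ satisfies $\rank\sJ_{C,\sI_<(\nu)}=\rank\sJ_C$. Consequently the stratum reached along $V$ is $\relint\mathcal S_\nu$ for the vertex $\nu$ whose normal cone $N_{\Lambda_C}^{\sN_C}(\nu)$ contains $P_{\sN_C}\sQ_C(V)$.

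The same single-point property is what proves $\ker\sJ_C\subseteq T_{U^\star}\sM$, by building curves in $\sM$ with prescribed velocity. You only assume $\sW\subseteq T_{U^\star}\sM$. Both polytope conditions must themselves be proved with high probability (via whitening, at $m\gtrsim d_\star=nr-r(r-1)/2$). Your criticality step at best gives the first; the nondegeneracy condition is never addressed.

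With this reduction the probability bound is transparent. Take an orthonormal basis $u_1,\dots,u_{n-r}$ of $(\col U_r)^\perp$. The vectors $\sQ_C(u_je_k^\top)=\sA_C(u_ju_j^\top)$ are i.i.d.\ standard Gaussian and independent of $(\sJ_C,\Lambda_C)$, because $u_ju_j^\top\perp\Im\sT$. Each vertex normal cone is pointed, so its spherical measure satisfies $\alpha_\nu\le 1/2$. Failure means all $n-r$ projected residuals land in one cone, which has probability $\sum_\nu\alpha_\nu^{\,n-r}\le 2^{-(n-r-1)}$. Your ``symmetric sign event in dimension $n-r-1$'' is not this mechanism and never engages the geometry of $\Lambda_C$.

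\textbf{The incompatibility step fails as written.} The pieces are sign vectors $\sigma\in\{\pm1\}^m$, and, as you note yourself, none lies in $\Lambda$ generically. A cube vertex in $\Lambda_C$ would be a vertex of $\Lambda_C$ with no free coordinates, whereas vertices have $d_\star\ge1$ of them. So $\langle DF(U^\star)^*\sigma,Z_{\sM}(V)\rangle$ is not killed. Moreover, ``nonzero almost surely'' is unjustified when $\sigma,\sigma'$ are selected from the data, and $n-r\ge 2$ plays no role there.

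The idea survives if you use multipliers instead of pieces. All pieces active on $\relint\mathcal S_\nu$ share one branch quadratic, and $\zeta_\sigma$ is linear in $\sigma$. So you may replace $\sigma$ by $(\nu,-\sign\varepsilon_N)\in\Lambda$, which is a convex combination of those pieces. The difference of branch quadratics is then
$\langle\nu_1-\nu_2,\sQ_C(V)\rangle=\langle\nu_1-\nu_2,\eta_C(V)\rangle=\|\eta_C(V)\|_1-\langle\nu_2,\eta_C(V)\rangle>0$
on the cone selecting $\nu_1$. This is deterministic once cone selection is available, and is equivalent to the paper's H\"older argument.

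\textbf{Criticality at the stated sample size.} Your $\ell_2$ least-norm correction has maximal inlier entry of order $\sqrt{m_Nd_\star\log m}/m_C$. It therefore stays in $(-1,1)$ only for $m\gtrsim nr\log m$, not $m\ge c_1nr$. Getting the stated rate needs the (whitened) lower bound $\inf_{\|u\|_2=1}\|\widetilde{\sJ}_C[u]\|_1\gtrsim m_C$ together with a separating-hyperplane argument, or the cited criticality result.
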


\begin{theorem}[Asymmetric matrix sensing]\label{thm:asym-MS}
Consider the asymmetric matrix sensing problem~\eqref{eq::asym-MS}. Suppose that the measurement matrices $\Gamma_1,\ldots,\Gamma_m$ have i.i.d. standard normal entries, and that the outlier vector $\varepsilon$ is independent of the measurement matrices and satisfies $\|\varepsilon\|_0\leq pm$ for some $0\leq p<1/2$. Let $X^\star\in\RR^{n_1\times n_2}$ have rank $r$, and suppose that $k\geq r+1$, $n_1\geq r+1$, and $n_2\geq r+1$. Then there exist constants $c_1,c_2>0$, depending only on $p$, such that, whenever $m\geq c_1(n_1+n_2)r$, with probability at least
\[
1-e^{-c_2(n_1+n_2)r},
\]
every balanced ground-truth factor pair $(U_1^\star,U_2^\star)$ satisfying $U_1^\star {U_2^\star}^{\top}=X^\star$ and ${U_1^\star}^{\top}U_1^\star={U_2^\star}^{\top}U_2^\star$ is an inactive critical point of $f_{\mathrm{asym}}$.
\end{theorem}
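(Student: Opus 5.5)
The plan is to reduce to a single canonical factor pair, check criticality through a dual certificate, and then exhibit, under the activity hypothesis, two accessible strata of $\sE$ whose branch quadratics disagree, so that Theorem~\ref{thm:branching-criterion} applies.

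\textbf{Reduction to one factor pair.} Write $X^\star=P\Sigma Q^\top$ for a compact SVD. Every balanced factor pair has the form $(U_1^\star,U_2^\star)=(P\Sigma^{1/2}[I_r\;0]R,\;Q\Sigma^{1/2}[I_r\;0]R)$ with $R\in O(k)$. Since $f_{\mathrm{asym}}(U_1R,U_2R)=f_{\mathrm{asym}}(U_1,U_2)$ and the map $(U_1,U_2)\mapsto(U_1R,U_2R)$ is orthogonal, Lemma~\ref{lem:orthogonal-invariance} lets me prove inactivity for the canonical pair $R=I_k$ only. This is what makes the high-probability event uniform over all balanced factors. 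The $(k-r)$ trailing columns of both canonical factors vanish; this is where $k\ge r+1$ enters.

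\textbf{Criticality and multipliers.} I will use the stacked form $F(U)=\sA(\sL(UU^\top))-y$ with $g=\|\cdot\|_1$, and Example~\ref{ex:l1-lmi}. At $U^\star$ we have $\bar z=-\varepsilon$. Let $O$ be the outlier support; the free coordinates are the inliers $O^c$. The multiplier slice $\Lambda$ consists of $v\in[-1,1]^m$ with $v_O=\sign(-\varepsilon_O)$ and
\[
\textstyle\sum_i v_i\Gamma_iU_2^\star=0,\qquad \sum_i v_i\Gamma_i^\top U_1^\star=0 .
\]
This is a dual certificate for the $\ell_1$ program restricted to the tangent space $\{PA^\top+BQ^\top\}$, which has dimension $\asymp(n_1+n_2)r$. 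Following the standard argument for robust recovery with $p<1/2$ (restricted isometry / sharpness of the $\ell_1$ loss on that space for $m\ge c_1(n_1+n_2)r$), I would show that with probability $1-e^{-c_2(n_1+n_2)r}$ the slice $\Lambda$ is nonempty. I also want a certificate $v$ with $|v_i|<1$ on a positive fraction of inliers, so that $\Lambda$ has positive dimension and $\sE$ contains several sign cones $\mathcal S_v$ of the kind drawn in Figure~\ref{fig:ex_polyhedral_fan_hexagon}.

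\textbf{Branching directions.} Take $a\perp\col P$ and $b\perp\col Q$; these exist because $n_1,n_2\ge r+1$. Form the direction $V_{a,b}=(a e_k^\top,\,b e_k^\top)$ in the last (zero) column. Then $DF(\bar x)[V]=\sA(\sL(\cdot))$ of a term killed by the zero column, so $DF(\bar x)[V]=0$, while $\tfrac12D^2F[V,V]=\sA(ab^\top)$. I will let $\sW$ be the span of such directions (intersected with $T_{\bar x}\sM$; I must argue that they lie in it, e.g.\ because $F^{-1}(\sE)$ contains the curves $t\mapsto U^\star+tV$ whenever $\sA(ab^\top)_{O^c}$ falls in some $\mathcal S_v$ for $v\in\Lambda$). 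Since each piece $g_\sigma$ is linear, $\zeta_\sigma(V)=\inp{\sigma}{\eta_\sM(V)}$. The troublesome term is $DF[Z_\sM(V)]$. I would handle it by showing that $\inp{\sigma}{DF[Z]}=\inp{DF^*\sigma}{Z}$, together with $Z_\sM(V)\in N_{\bar x}\sM$, forces this contribution to be the same across the strata compared; I would first try to prove it vanishes by choosing the two strata with sign patterns $\sigma,\sigma'$ that both satisfy $DF^*\sigma=DF^*\sigma'$ on $N_{\bar x}\sM$. The branch quadratics then differ by
\[
\sum_{i\in O^c}(\sigma_i-\sigma'_i)\,\inp{\Gamma_i}{ab^\top}.
\]

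\textbf{Accessibility, the main obstacle.} I need two relatively open cones of pairs $(a,b)$ whose second-order image $\sA(ab^\top)_{O^c}$ lands in two different top-dimensional sign cones of $\sE$. Since the inlier vector $\bigl(\inp{\Gamma_i}{ab^\top}\bigr)_{i\in O^c}$ is Gaussian and independent of the certificate directions $P,Q$, I expect that it reaches at least two distinct orthants compatible with $\Lambda$ with the stated probability, and that by continuity and openness this happens on open cones. Converting "$F(\gamma(tV))$ lies in a given stratum" into a statement about $t^2\sA(ab^\top)+o(t^2)$, uniformly over an open cone of directions as in the proof of Theorem~\ref{thm:minimax-interpolation}, together with the control of $Z_\sM$ above, is the step I expect to be hardest.
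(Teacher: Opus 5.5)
\textbf{Gap: accessibility, i.e.\ which stratum is reached.} Your reduction to the canonical pair via Lemma~\ref{lem:orthogonal-invariance} and your kernel directions are the right ones. With $a=\alpha u$, $b=\beta v$ and $w=e_k$, your $V_{a,b}$ is the paper's $\alpha V_1+\beta V_2$.

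The problem is how you decide which stratum is reached. Under the activity hypothesis, for $V\in T_{\bar x}\sM$ with $\sJ_C[V]=0$, the clean block of $F(\gamma(tV))$ is $t^2\bigl(\sQ_C(V)+\sJ_C[Z_\sM(V)]\bigr)+o(t^2)$. It is not $t^2\bigl(\inp{\Gamma_i}{ab^\top}\bigr)_{i\in\Omega_C}+o(t^2)$, and the curvature term cannot be deferred to the comparison of quadratics.

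The sign cones $\mathcal{S}_\nu$ making up $\sE_C$ are not orthants. Under (I)--(N) they have dimension $m_C-d_\star<m_C$, so the Gaussian vector $\sQ_C(V)$ lies in $\sE_C$ with probability zero. Two consequences follow:
\begin{itemize}
\item The straight lines $U^\star+tV$, along which $F_C=t^2\sQ_C(V)$ exactly, are almost surely not in $\sM$. So your proposed route to $\sW\subseteq T_{\bar x}\sM$ fails.
\item The sign pattern of $\sQ_C(V)$ does not determine the stratum. The unknown correction $\sJ_C[Z_\sM(V)]\in\sR_C$ moves the coefficient onto $\sE_C$, where it vanishes on the free coordinates of some vertex.
\end{itemize}

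The missing idea is the unique-slice property (Proposition~\ref{prop:unique-slice}). It needs two hypotheses: some multiplier in $\Lambda_C$ has \emph{every} inlier coordinate strictly inside $(-1,1)$, not merely a positive fraction; and every vertex satisfies the rank condition (N), which you never formulate. Under these, $(a+\sR_C)\cap\sE_C$ is a single point. This has three consequences:
\begin{itemize}
\item It forces $\eta_C(V)=\sS_{\Lambda_C}(\sQ_C(V))$, whatever $Z_\sM$ is.
\item It yields $T_{U^\star}\sM=\ker\sJ_C$ via a fixed-point correction $U^\star+tV+O(t^2)$ (Lemma~\ref{lem:tangent-clean-null-equality}).
\item It shows that $\relint\mathcal{S}_\nu$ is reached exactly when $P_{\sN_C}\sQ_C(V)\in\relint N_{\Lambda_C}^{\sN_C}(\nu)$ (Lemma~\ref{lem:branch-correspondence}).
\end{itemize}
In the paper, (I) and (N) come from whitening and Proposition~\ref{prop:interior-point-event}. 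Together with criticality, imported from \citet[Corollary~2]{ma2025can}, they account for the entire failure probability $e^{-c_2(n_1+n_2)r}$. The cone selection then holds almost surely.

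\textbf{Distinct strata and condition (b).} Once selection is phrased through vertex normal cones, distinctness is not a matter of hitting ``two distinct orthants with the stated probability''.
\begin{itemize}
\item Replacing $b$ by $-b$ keeps you in your family and negates $\sQ_C$.
\item Conditional on $(\sJ_C,\Lambda_C)$, the vector $g:=P_{\sN_C}\bigl(\inp{\Gamma_i}{uv^\top}\bigr)_{i\in\Omega_C}$ has a density on $\sN_C$. So $\pm g$ avoid the cone boundaries almost surely.
\item Because vertex normal cones are pointed, $g$ and $-g$ fall in the relative interiors of two \emph{distinct} vertex cones.
\item The cones $\{ab>0\}$ and $\{ab<0\}$ in $\Span\{V_1,V_2\}$ then satisfy Theorem~\ref{thm:spectrahedral-deterministic}; this is Proposition~\ref{prop:two-sector-branching}.
\end{itemize}

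For condition (b), your cancellation idea fails with pure sign patterns: $\sJ_C^*(\sigma_C-\sigma'_C)\neq0$ generically, and you have no control over $Z_\sM(V)$. It does work with the multipliers themselves:
\begin{itemize}
\item The pieces are linear, so the branch quadratic is linear in the piece.
\item $(\nu_j,-\sign(\varepsilon_N))$ is a convex combination of pieces active on $\relint\mathcal{S}_{\nu_j}$.
\item $\sJ_C^*\nu_1=\sJ_C^*\nu_2$, so the $Z_\sM$ term cancels.
\end{itemize}
Hence the branch quadratics differ by $\inp{\nu_1-\nu_2}{P_{\sN_C}\sQ_C(V)}$, which is strictly positive on $\sC_1$. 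That is a valid shortcut for the paper's H\"older argument, but it presupposes the accessibility that only the slice machinery above delivers.
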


Several observations regarding Theorems~\ref{thm:sym-MS} and~\ref{thm:asym-MS} are in order. First, \citet[Corollary~1]{ma2025can} show that, with high probability, every ground-truth factor $U^\star$ satisfying $U^\star{U^\star}^{\top}=X^\star$ is a strict saddle point of $f_{\mathrm{sym}}$, provided that $cnr\leq m\leq c'nk$ for suitable constants $c,c'>0$. Theorem~\ref{thm:sym-MS} complements this result by establishing that these ground-truth factors are also inactive. Consequently, within this sample-size regime, they are inactive strict saddles with high probability. This identifies inactivity as the geometric mechanism underlying the failure of the subgradient method to escape these strict saddles. An analogous conclusion for asymmetric matrix sensing follows by combining \citet[Corollary~2]{ma2025can} with Theorem~\ref{thm:asym-MS}.
Second, as in nonnegative sparse regression, overparameterization—that is, $k>r$—is necessary for the ground-truth factors to be inactive. Indeed, when $k=r$, \citet[Propositions~4.6 and~4.7]{charisopoulos2021low} establish that $f_{\mathrm{sym}}$ and $f_{\mathrm{asym}}$ grow sharply away from their respective ground-truth solution manifolds. Since these manifolds are $C^2$ smooth, this sharpness implies that the corresponding ground-truth factors are active critical points. Finally, the sample-size requirements in both theorems match the corresponding information-theoretic limits up to constant factors.

\medskip
\noindent\emph{\bf Structure of the proof and geometric intuition:}
The crux of our analysis is to show how the branching criterion manifests itself in robust low-rank recovery. In Section~\ref{subsec:robust-branching-condition}, we reduce the criterion to a deterministic cone-selection test involving the normal cones at the vertices of the clean multiplier polytope. Geometrically, each tangent direction in a hypothetical identifiable manifold generates a projected second-order residual, and the normal cone containing this residual determines the branch that is accessed. If two relatively open sets of tangent directions select normal cones corresponding to distinct vertices, then they access branches with incompatible quadratic expansions, forcing the ground-truth factor to be inactive. In Section~\ref{subsec:high-prob-results}, we show that, after an appropriate whitening, Gaussian measurements make the directions of these projected residuals uniformly distributed on the relevant sphere. Since each pointed normal cone occupies at most half of the sphere, the projected residuals are unlikely to all select the same branch, yielding the desired high-probability guarantees.

\subsection{Branching Criterion and Cone Selection}\label{subsec:robust-branching-condition}
Problem~\eqref{eq:recovery-problem-BM} takes the finite-max composite form of Section~\ref{subsec:problem-formulation}, $f_{\mathrm{LR}} = g\circ F$, where
\begin{align*}
F(U) = \sA\left(\sL\left(UU^\top\right)\right) - y = \sA\left(\sL\left(UU^\top - X^\star\right)\right) - \varepsilon, \qquad g(z) = \|z\|_1.
\end{align*}
Here $F:\mathbb{R}^{n\times k}\to\mathbb{R}^m$ is $C^2$ and definable, and $g(z)=\max_{\sigma\in\{\pm1\}^m}g_\sigma(z)$ with each piece $g_\sigma(z):=\inp{\sigma}{z}$ being linear. Let $\Omega_C := \{i:\varepsilon_i=0\}$ and $\Omega_N := \{i:\varepsilon_i\neq 0\}$ denote the index sets of clean and noisy measurements, with $m_C := |\Omega_C|$ and $m_N := |\Omega_N|$. We write $\sA_C:\mathbb{S}^{n}\to\mathbb{R}^{m_C}$ and $F_C:\mathbb{R}^{n\times k}\to\mathbb{R}^{m_C}$ for the restrictions of $\sA$ and $F$ to the clean measurements, and $\sA_N$, $F_N$ for the corresponding restrictions to the noisy measurements. Analogously, we write $\varepsilon_N\in \mathbb{R}^{m_N}$ to denote the restriction of $\varepsilon$ to the noisy measurements. The clean and noisy Jacobians at a ground-truth factor $U^\star$ are defined as
\begin{align}
\sJ_{C}(U^\star)[V]
&:=DF_C(U^\star)[V]
=\sA_C\left(\sL\left(U^\star V^\top+VU^{\star\top}\right)\right),\label{eq:clean-jacobian}\\
\sJ_{N}(U^\star)[V]
&:=DF_N(U^\star)[V]
=\sA_N\left(\sL\left(U^\star V^\top+VU^{\star\top}\right)\right).
\end{align}
When the base point $U^\star$ is clear from context, we suppress it from the notation and write $\sJ_C[V]$ and $\sJ_N[V]$ in place of $\sJ_C(U^\star)[V]$ and $\sJ_N(U^\star)[V]$, respectively. Likewise, $\sJ_C^*[v]$ and $\sJ_N^*[v]$ denote the actions of the corresponding adjoint linear maps on $v$. 

The following proposition gives the inclusion-minimal object that any $C^2$ identifiable manifold at a critical point $U^\star$ would have to equal locally.
\begin{proposition}[Clean reduction of the identifiable set]\label{prop:clean-reduction}
Fix a critical point $U^\star$ of $f_{\mathrm{LR}}$ satisfying $U^\star{U^\star}^\top=X^\star$. Define the clean multiplier polytope
\begin{equation}\label{eq:clean-multiplier-section5}
	\Lambda_C:=\left\{v\in[-1,1]^{m_C}:\sJ_{C}^*[v]=\sJ_{N}^*[\sign(\varepsilon_N)]\right\},
\end{equation}
Then, letting $\mathcal{S}_v$ denote the sign cone \eqref{eq:Sy-section5} in $\RR^{m_C}$, the set 
\begin{align}\label{eq::Ec-LR}
	\sM=F_C^{-1}(\sE_C),\qquad \sE_C:=\bigcup_{v\in\Lambda_C}\mathcal{S}_v\subset\RR^{m_C}
\end{align}
is the locally minimal identifiable set for $f_{\mathrm{LR}}$ at $U^\star$ for $0\in \partial f_{\mathrm{LR}}(U^\star)$.
\end{proposition}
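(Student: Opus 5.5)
The plan is to specialize Example~\ref{ex:l1-lmi} to $F=\sA(\sL(UU^\top))-y$ at $\bar x=U^\star$, and then show that the noisy coordinates contribute no constraint near $U^\star$. Since $U^\star{U^\star}^\top=X^\star$, we have $\bar z:=F(U^\star)=-\varepsilon$. Hence $\bar z_i=0$ for $i\in\Omega_C$ and $\bar z_i=-\varepsilon_i\neq0$ for $i\in\Omega_N$. By Example~\ref{ex:l1-lmi}, the locally minimal identifiable set is $F^{-1}(\sE)$ with $\sE=\bigcup_{w\in\Lambda}\mathcal{S}_w$. Here
\[
\Lambda=\bigl\{w\in[-1,1]^m: w_i=\sign(\bar z_i)\text{ for }i\in\Omega_N,\ DF(U^\star)^*w=0\bigr\}.
\]

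The first step is to rewrite $\Lambda$ in clean and noisy blocks. Write $w=(w_C,w_N)$. On $\Omega_N$ the constraint forces $w_N=\sign(\bar z_N)=-\sign(\varepsilon_N)$. Since $DF(U^\star)^*w=\sJ_C^*[w_C]+\sJ_N^*[w_N]$, criticality of the multiplier reads $\sJ_C^*[w_C]=\sJ_N^*[\sign(\varepsilon_N)]$, with $w_C\in[-1,1]^{m_C}$. Thus $w\mapsto w_C$ is a bijection from $\Lambda$ onto $\Lambda_C$ as defined in~\eqref{eq:clean-multiplier-section5}. Nonemptiness is not an issue, because $U^\star$ is assumed critical.

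The second step describes the sign cones. For $w\in\Lambda$, every noisy coordinate is saturated with $w_i=\sign(\bar z_i)$. By~\eqref{eq:Sy-section5}, $\mathcal{S}_w=\mathcal{S}_{w_C}\times\prod_{i\in\Omega_N}H_i$, where $H_i$ is the closed half-line $\{t:\sign(\bar z_i)\,t\ge0\}$ and $\mathcal{S}_{w_C}$ is the sign cone in $\RR^{m_C}$. It follows that
\[
\sE=\Bigl(\bigcup_{v\in\Lambda_C}\mathcal{S}_v\Bigr)\times\prod_{i\in\Omega_N}H_i=\sE_C\times\prod_{i\in\Omega_N}H_i .
\]
This step deserves care, because the description of the $\ell_1$ identifiable set at $\bar z$ rather than at $0$ is a local statement. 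Example~\ref{ex:l1-outer} and its appendix derive $\mathcal{S}_{\bar v}$, and I would check that the reduction there holds on a neighborhood of $\bar z$, where the saturated coordinates with $\bar z_i\neq0$ impose only sign conditions.

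The final step, which I expect to be the only real obstacle, is localization. By continuity of $F$, there is a neighborhood $\sO$ of $U^\star$ on which $F_N(U)$ stays close to $-\varepsilon_N$. On $\sO$ we then have $\sign(F_i(U))=\sign(\bar z_i)$ for every $i\in\Omega_N$, so $F_N(U)\in\prod_{i\in\Omega_N}H_i$ automatically. Therefore $F^{-1}(\sE)\cap\sO=F_C^{-1}(\sE_C)\cap\sO$. Local minimality of an identifiable set is a property of its germ at $\bar x$: intersecting with a neighborhood preserves identifiability, and two sets that agree near $U^\star$ satisfy the local inclusions in the definition interchangeably. So $F_C^{-1}(\sE_C)$ is also the locally minimal identifiable set. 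The remaining subtlety is to make sure that the neighborhood on which the sign-cone description of Example~\ref{ex:l1-outer} is valid is compatible with $\sO$. Since both are neighborhoods of $\bar z$, taking their intersection settles it.
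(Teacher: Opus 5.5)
Your proposal is correct and follows essentially the same route as the paper. You invoke the $\ell_1$-composite identifiable set of Example~\ref{ex:l1-lmi}, note that the noisy block of every multiplier is pinned to $-\sign(\varepsilon_N)$ so the clean blocks range exactly over $\Lambda_C$, and shrink to a neighborhood of $U^\star$ on which the noisy signs of $F$ are fixed, so that membership in $\sE$ depends only on $F_C$. One small remark: your worry that the sign-cone description is only local is unnecessary, since $\sM_{\bar v}=\mathcal{S}_{\bar v}$ in Appendix~\ref{app::example-l1} is a global set equality, so the decomposition $\sE=\sE_C\times\prod_{i\in\Omega_N}H_i$ and the germ argument need no compatibility of neighborhoods.
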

The proof of Proposition~\ref{prop:clean-reduction} is presented in Appendix~\ref{app:clean-reduction}. It shows that the branching criterion can be studied on $F_C^{-1}(\sE_C)$, where $\sE_C$ is the identifiable set in the clean coordinates before pullback by $F_C$; the corrupted coordinates contribute only a smooth, sign-fixed term to the local expansion of the objective. Consequently, the nonsmoothness of $f$ sits entirely on the clean coordinates. To obtain a more tractable description of $\sE_C$, we introduce
\begin{equation}\label{eq:clean-spaces}
\sR_C := \Im\sJ_C, \qquad \sN_C := \ker\sJ_C^* = \sR_C^\perp.
\end{equation}
in terms of which $\Lambda_C$ in~\eqref{eq:clean-multiplier-section5} can be written as
\[
\Lambda_C = (\bar v + \sN_C)\cap[-1,1]^{m_C},
\]
where $\bar v\in\Lambda_C$ is arbitrary. 
Recall from Example~\ref{ex:l1-outer} that a coordinate $i$ of $\nu\in [-1,1]^{m_C}$ is saturated when $|\nu_i|=1$ and free when $|\nu_i|<1$. Write $\sI_<(\nu):=\{i:|\nu_i|<1\}$ for the free coordinates of $\nu$. 
Next, we impose two geometric conditions on the clean multiplier polytope $\Lambda_C$. Together they make $\Lambda_C$ a full-dimensional affine-cube section whose vertices index the full-dimensional cones of $\sE_C$.
\begin{assumption}[Affine-cube geometry of the multiplier polytope]\label{ass:affine-cube}
The clean multiplier polytope $\Lambda_C$ satisfies:
\begin{enumerate}
	\item[(I)] \emph{(Interior condition)} $\Lambda_C$ meets the interior of the cube, i.e.,  $\Lambda_C\cap(-1,1)^{m_C}\ne\emptyset$;
	\item[(N)] \emph{(Nondegeneracy condition)} For every vertex $\nu$ of $\Lambda_C$, the restriction $\sJ_{{C},\sI_<(\nu)}: \RR^{n\times k}\to \RR^{|\sI_<(\nu)|}$ of $\sJ_C$ to the free coordinates of $\nu$ has full rank, i.e., $\rank \sJ_{{C},\sI_<(\nu)} = \rank \sJ_{{C}}$.
\end{enumerate}
\end{assumption}
In Section~\ref{subsec:high-prob-results}, we show that under appropriate randomness in the model, Assumption~\ref{ass:affine-cube} holds with high probability. Under these assumptions, our next proposition shows that the set $\sE_C$ decomposes into the sign cones $\mathcal{S}_\nu$ over the vertices of $\Lambda_C$, which are equivalently the normal cones of $[-1,1]^{m_C}$ at the vertices of $\Lambda_C$, where each normal cone has dimension $\dim \sN_C$.

\begin{proposition}[Structure of $\sE_C$]\label{prop:fan-structure}
The set $\sE_C$ of \eqref{eq::Ec-LR} is a polyhedral fan:
\begin{equation}\label{eq:EC-vertex-union}
	\sE_C = \bigcup_{\nu\in\operatorname{Vert}(\Lambda_C)}\mathcal{S}_\nu,
\end{equation}
with distinct vertices $\nu$ yielding distinct sign cones $\mathcal{S}_\nu$. Moreover:
\begin{itemize}
	\item Under condition~\textnormal{(I)}, $\Lambda_C$ is a full-dimensional polytope of dimension $\dim\sN_C$, and $N_{\Lambda_C}(v) = \sR_C + \mathcal{S}_v$ for every $v\in\Lambda_C$.
	\item Under conditions~\textnormal{(I)} and~\textnormal{(N)}, every cone $\mathcal{S}_\nu$ for $\nu\in\operatorname{Vert}(\Lambda_C)$ is full-dimensional with $\dim\mathcal{S}_\nu=\dim\sN_C$, and the union~\eqref{eq:EC-vertex-union} is irredundant: no $\mathcal{S}_\nu$ is contained in another.
\end{itemize}
\end{proposition}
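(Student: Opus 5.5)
The plan is to work entirely with the convex geometry of the cube $[-1,1]^{m_C}$ and the affine slice $\Lambda_C=(\bar v+\sN_C)\cap[-1,1]^{m_C}$. The starting observation is that the sign cone $\mathcal{S}_v=N_{[-1,1]^{m_C}}(v)$ depends only on the face $G(v)$ of the cube whose relative interior contains $v$. It is monotone: if $v$ lies in the face $G(w)$, then $\mathcal{S}_w\subseteq\mathcal{S}_v$, since coordinates saturated at $w$ are saturated at $v$ with the same sign.

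For the vertex decomposition~\eqref{eq:EC-vertex-union}, fix $v\in\Lambda_C$ and consider the face $\Lambda_C\cap G(v)$ of the polytope $\Lambda_C$. This face is a polytope, so it contains a vertex $\nu$ of $\Lambda_C$. Because $\nu\in G(v)$, every coordinate saturated at $v$ is saturated at $\nu$ with the same sign, which gives $\mathcal{S}_v\subseteq\mathcal{S}_\nu$. Together with the trivial reverse inclusion, this proves the union identity.

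Distinct vertices give distinct cones for the following reason. A vertex $\nu$ of $\Lambda_C$ is the unique point of $\Lambda_C\cap G(\nu)$: if this face contained a segment, $\nu$ would lie in the relative interior of a segment of $\Lambda_C$ unless $\nu$ is an endpoint, and in that case the face of $\Lambda_C$ cut out by $G(\nu)$ would have $\nu$ in its relative interior. The face $G(\nu)$, in turn, is determined by its saturated signs, hence by $\mathcal{S}_\nu$ (read off the support and signs of $\mathcal{S}_\nu$). So $\mathcal{S}_\nu=\mathcal{S}_{\nu'}$ forces $G(\nu)=G(\nu')$ and hence $\nu=\nu'$.

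Under (I), the slice meets the open cube, so $\dim\Lambda_C=\dim(\bar v+\sN_C)=\dim\sN_C$. The normal cone formula is the standard intersection rule $N_{A\cap B}(v)=N_A(v)+N_B(v)$ for polyhedral sets. Here $A=\bar v+\sN_C$ is affine with $N_A=\sN_C^\perp=\sR_C$, and $B$ is the cube with $N_B(v)=\mathcal{S}_v$. For polyhedra this rule needs no qualification; (I) is only used for the dimension count.

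Under (I) and (N), let $\nu$ be a vertex. Since $\Lambda_C$ is full-dimensional in $\bar v+\sN_C$, the normal cone $N_{\Lambda_C}(\nu)=\sR_C+\mathcal{S}_\nu$ has nonempty interior in $\RR^{m_C}$, so $\dim(\sR_C+\mathcal{S}_\nu)=m_C$. Condition (N) should give $\sR_C\cap\Span\mathcal{S}_\nu=\{0\}$. Indeed, $\Span\mathcal{S}_\nu$ consists of vectors supported on the saturated coordinates $\sI_<(\nu)^c$. A vector $\sJ_C[V]$ with zero free part satisfies $\sJ_{C,\sI_<(\nu)}[V]=0$, and equality of ranks gives $\ker\sJ_{C,\sI_<(\nu)}=\ker\sJ_C$, so $\sJ_C[V]=0$. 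Hence $\dim\mathcal{S}_\nu\ge m_C-\dim\sR_C=\dim\sN_C$.

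For the reverse inequality, the vertex property gives exactly $\dim\sN_C$ independent active cube constraints along $\sN_C$. I would argue that the saturated coordinates at $\nu$ number exactly $\dim\sN_C$, using (N) once more: an extra saturated coordinate would make the free-coordinate restriction fail to have full rank. This counting is the step I expect to be most delicate. If it resists, I would settle for showing that $\mathcal{S}_\nu$ is a full orthant of $\Span\mathcal{S}_\nu$ with $\dim\Span\mathcal{S}_\nu=\dim\sN_C$, via the direct-sum decomposition $\RR^{m_C}=\sR_C\oplus\Span\mathcal{S}_\nu$.

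Irredundancy then follows. Suppose $\mathcal{S}_\nu\subseteq\mathcal{S}_{\nu'}$. Both are orthant cones of the same dimension on coordinate subspaces, so they are equal. By distinctness this forces $\nu=\nu'$.
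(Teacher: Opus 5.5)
Your proposal is correct and follows the paper's proof closely. The union identity via a vertex of the face $\Lambda_C\cap G(v)$, distinctness via the fact that a vertex admits no nonzero direction of $\sN_C$ supported on its free coordinates, and irredundancy via equal-dimensional coordinate orthants are all the paper's arguments. Your remark that the polyhedral intersection rule needs no qualification is also correct. It slightly sharpens the paper, which invokes (I) for the sum rule; in your version (I) is used only for $\dim\Lambda_C=\dim\sN_C$.

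Two things need tidying. First, in the dimension count you attribute the inequality to the wrong fact. The identity $\sR_C\cap\Span\mathcal{S}_\nu=\{0\}$, which you correctly derive from (N), yields $\dim\mathcal{S}_\nu\le m_C-\dim\sR_C$. The lower bound $\dim\mathcal{S}_\nu\ge m_C-\dim\sR_C$ instead comes from $\sR_C+\Span\mathcal{S}_\nu=\RR^{m_C}$, i.e., from the nonempty interior of the vertex normal cone. That interior is nonempty at any vertex of any polytope because vertices are exposed, so it is not really driven by full-dimensionality of $\Lambda_C$.

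Since you already have both facts, $\RR^{m_C}=\sR_C\oplus\Span\mathcal{S}_\nu$ gives equality at once. Your ``fallback'' is therefore the complete argument, and the step you flag as delicate is not needed. That counting step is in fact the paper's route. The vertex property forces $|\sI_<(\nu)|\le\rank\sJ_C$, since otherwise some nonzero $h\in\sN_C$ supported on $\sI_<(\nu)$ makes $\nu$ a midpoint. Condition (N) forces $|\sI_<(\nu)|\ge\rank\sJ_C$. Your direct-sum version is the dual of this count, phrased through the normal-cone formula of the first bullet. Once the inequalities are oriented correctly it is equally short, and it has the small merit of tying the two bullets together.

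Second, you never verify the fan claim itself, namely that $\mathcal{S}_\nu\cap\mathcal{S}_{\nu'}$ is a common face of both cones. It is immediate: the intersection is the orthant on the coordinates saturated by both vertices with the same sign. Equivalently, every $\mathcal{S}_v$ is a cone of the normal fan of the cube. It is part of the statement, though, and should be said.
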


\begin{proof}
Recall the construction $\sE_C=\bigcup_{v\in\Lambda_C}\mathcal{S}_v$ from Proposition~\ref{prop:clean-reduction}, where $\mathcal{S}_v$ is the sign cone of $v$, or equivalently the normal cone to the cube at $v$,
\[
\mathcal{S}_{v}:=\{z\in\RR^{m_C}:z_i=0\text{ if }|v_i|<1,\ z_i\ge0\text{ if }v_i=1,\ z_i\le0\text{ if }v_i=-1\}.
\]
For two vertices $\nu,\nu'$, the intersection $\mathcal{S}_\nu\cap\mathcal{S}_{\nu'}$ is the set of $z$ supported on the coordinates $\{i: |\nu_i|=|\nu'_i|=1\text{ and }\nu_i=\nu'_i\}$. This is a common face of the two cones, and every face of $\mathcal{S}_\nu$ arises in the same way. The cones $\mathcal{S}_\nu$ together with their faces are therefore a polyhedral fan. To establish \eqref{eq:EC-vertex-union}, take $v\in\Lambda_C$ and let $\nu$ be a vertex of the smallest face of $\Lambda_C$ containing $v$. Passing from $v$ to $\nu$ only saturates further coordinates, so the saturated set of $v$ lies in that of $\nu$ and $\mathcal{S}_v\subseteq\mathcal{S}_\nu$, which gives \eqref{eq:EC-vertex-union}. To see distinct vertices give distinct cones, note that if $\mathcal{S}_\nu=\mathcal{S}_{\nu'}$, $\nu$ and $\nu'$ have the same saturated coordinates and signs. A vertex has no nonzero direction of $\sN_{C}$ along its free coordinates, since such a direction would place it inside a segment of $\Lambda$, so its saturated coordinates determine it and $\nu=\nu'$.

Now assume condition \textnormal{(I)}, and fix $\bar v\in\Lambda_C\cap(-1,1)^{m_C}$. As $\bar v$ is interior to the cube and $\Lambda_C = (\bar v + \sN_C)\cap[-1,1]^{m_C}$, $\Lambda_C$ contains a relative neighborhood of $\bar v$ in $\bar v+\sN_{C}$. Hence, it is full-dimensional there, of dimension $\dim\sN_{C}$. For the normal cone formula, let $v\in\Lambda_C$ be arbitrary. Since $\bar v+\sN_{C}$ meets the interior of the cube, the normal-cone sum rule \citep[Corollary 23.8.1]{rockafellar1997convex} applies at $v$: the normal cone to $\bar v+\sN_{C}$ is $\sN_{C}^{\perp}=\sR_{C}$ and the normal cone to the cube at $v$ is the sign cone $\mathcal{S}_v$, so $N_{\Lambda_C}(v)=N_{\bar v+\sN_{C}}(v)+N_{[-1,1]^{m_C}}(v)=\sR_{C}+\mathcal{S}_v$.

Now assume conditions~\textnormal{(I)} and~\textnormal{(N)}, and let $\nu$ be a vertex of $\Lambda_C$. By condition~\textnormal{(N)}, the restriction $\sJ_{C,\sI_<(\nu)}$ has rank equal to $\rank\sJ_C$, which requires $|\sI_<(\nu)|\geq\rank\sJ_C$. For the reverse inequality, we claim $\nu$ has at least $m_C-\rank\sJ_C$ saturated coordinates, i.e., $|\sI_<(\nu)|\leq\rank\sJ_C$. Indeed, if $\nu$ had fewer than $m_C-\rank\sJ_C$ saturated coordinates, there would exist a nonzero direction $h\in\sN_C$ supported on the free coordinates of $\nu$, placing $\nu$ in the interior of a segment of $\Lambda_C$ and contradicting that $\nu$ is a vertex. Hence $|\sI_<(\nu)|=\rank\sJ_C$, and therefore
\[
\dim\mathcal{S}_\nu = m_C - |\sI_<(\nu)| = m_C - \rank\sJ_C = \dim\sN_C.
\]
Finally, if $\mathcal{S}_\nu\subseteq\mathcal{S}_{\nu'}$ for two vertices $\nu,\nu'$, then $\mathcal{S}_\nu$ is a face of $\mathcal{S}_{\nu'}$ of equal dimension, hence equals it, giving $\nu=\nu'$. The union~\eqref{eq:EC-vertex-union} is therefore irredundant.
\end{proof}

In light of Proposition~\ref{prop:fan-structure}, the set $\sE_C=\bigcup_{\nu\in\operatorname{Vert}(\Lambda_C)}\mathcal{S}_\nu$ forms a polyhedral fan and admits a natural stratification: the top-dimensional strata are the relative interiors $\relint\mathcal{S}_\nu$ of the maximal cones, one per vertex $\nu$ of $\Lambda_C$, and lower-dimensional strata appear along their common faces. The branching criterion then compares the second-order behavior across these top-dimensional strata. Figure~\ref{fig:ec-example-2d} illustrates the geometry of $\Lambda_C$, $\mathcal{S}_\nu$, and $\sE_C$ for $\dim\sN_C=2$ with $m_C=3$.

\begin{figure}[htbp]
\centering
\includegraphics{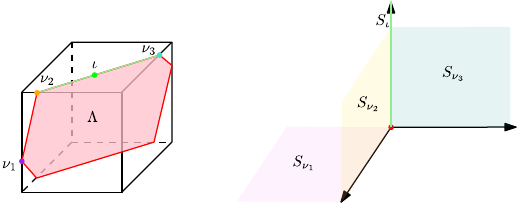}
\caption{The case $m_C=3$, $\dim\sN_C=2$. \emph{Left:} the clean multiplier polytope $\Lambda_C$ is a hexagon obtained by intersecting $\bar v+\sN_C$ with $[-1,1]^3$, satisfying Assumption~\ref{ass:affine-cube}. Three of its six vertices are $\nu_1$ (purple), $\nu_2$ (orange), and $\nu_3$ (cyan). \emph{Right:} by Proposition~\ref{prop:fan-structure}, $\sE_C=\bigcup_\nu\mathcal{S}_\nu$ over the six vertices; the cones of $\nu_1$, $\nu_2$, $\nu_3$ are shown in matching colors, each two-dimensional ($\dim\mathcal{S}_\nu=\dim\sN_C=2$). Adjacent cones share a lower-dimensional face: for a point $\iota$ (green) on the open edge $\nu_2\nu_3$, the cone $\mathcal{S}_\iota=\mathcal{S}_{\nu_2}\cap\mathcal{S}_{\nu_3}$ (green) is their common boundary ray.}
\label{fig:ec-example-2d}
\end{figure}

To apply the branching criterion (Theorem~\ref{thm:branching-criterion}), we work under the activity hypothesis and show that it forces two incompatible accessible branches. Specifically, we must determine (a) which cones $\mathcal{S}_\nu$ are accessible from $\sM$ at $U^\star$ along directions in some subspace $\sW\subseteq T_{U^\star}\sM$, and (b) whether their branch quadratics are incompatible. Condition~(b) turns out to be easy: any two distinct strata $\relint\mathcal{S}_{\nu_1}$ and $\relint\mathcal{S}_{\nu_2}$ yield incompatible branch quadratics. Condition~(a) is more delicate because it requires characterizing how curves on $\sM$ through $U^\star$ behave after being pushed forward through $F_C$. As we show below, accessibility reduces to a cone-selection test. The following lemma characterizes $T_{U^\star}\sM$ under the activity hypothesis.
\begin{lemma}[Tangent space of the clean identifiable set]\label{lem:tangent-clean-null-equality}
Fix a critical point $U^\star$ of $f_{\mathrm{LR}}$ satisfying $U^\star{U^\star}^\top=X^\star$. 
Under Assumption~\ref{ass:affine-cube}, assume $\sM=F_C^{-1}(\sE_C)$ is a $C^2$ identifiable manifold for $f_{\mathrm{LR}}$ at $U^\star$ for $0$. Then,
\[
T_{U^\star}\sM=\ker \sJ_C.
\]
\end{lemma}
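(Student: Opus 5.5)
We prove the two inclusions $T_{U^\star}\sM\subseteq\ker\sJ_C$ and $\ker\sJ_C\subseteq T_{U^\star}\sM$ separately. Both rely on the fan structure of Proposition~\ref{prop:fan-structure}: under (I) and (N), $\sE_C$ is a finite union of closed convex cones $\mathcal{S}_\nu$, each of dimension $\dim\sN_C$ and contained in the sign cone of a vertex.

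\emph{Inclusion $\ker\sJ_C\subseteq T_{U^\star}\sM$.} Let $V\in\ker\sJ_C$. We will produce a $C^1$ curve in $\sM$ with velocity $V$. Since $\sM$ is assumed to be a $C^2$ manifold, it suffices to show that $V$ lies in the Bouligand tangent cone of $\sM$ at $U^\star$, because this cone equals $T_{U^\star}\sM$ for a manifold.

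Consider first the easy case where $F_C(U^\star+tV)$ stays in $\sE_C$. Since $F_C(U^\star)=0$ and $DF_C(U^\star)V=0$, we have $F_C(U^\star+tV)=\tfrac{t^2}{2}D^2F_C(U^\star)[V,V]+o(t^2)$, and this point need not lie in $\sE_C$. We therefore correct the curve. We look for $U(t)=U^\star+tV+t^2W(t)$ with $F_C(U(t))\in\mathcal{S}_\nu$ for a suitable vertex $\nu$. Expanding gives
\[
F_C(U(t))=t^2\bigl(\tfrac12 D^2F_C[V,V]+\sJ_C[W]\bigr)+o(t^2),
\]
so we need $\tfrac12 D^2F_C[V,V]+\sJ_C[W]$ to lie in a cone of $\sE_C$. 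The key observation is that the cones $\mathcal{S}_\nu$ together with $\sR_C=\Im\sJ_C$ cover $\RR^{m_C}$. To see this, note that by the normal-cone formula $N_{\Lambda_C}(v)=\sR_C+\mathcal{S}_v$ from Proposition~\ref{prop:fan-structure}, and since $\Lambda_C$ is a polytope of full dimension in $\bar v+\sN_C$, the normal cones at its vertices cover the whole space: $\bigcup_\nu(\sR_C+\mathcal{S}_\nu)=\RR^{m_C}$. Hence we may choose $W$ and $\nu$ so that the leading term lies in $\relint\mathcal{S}_\nu$, after a small generic shift if necessary. We then use condition (N) and an implicit-function argument on the free coordinates $\sI_<(\nu)$ to absorb the $o(t^2)$ error. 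Here $\sJ_{C,\sI_<(\nu)}$ has full rank, so we can enforce $[F_C(U(t))]_{\sI_<(\nu)}=0$ exactly, while the saturated coordinates keep the correct strict signs for small $t$. This yields a curve in $F_C^{-1}(\mathcal{S}_\nu)\subseteq\sM$ with velocity $V$.

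\emph{Inclusion $T_{U^\star}\sM\subseteq\ker\sJ_C$.} Let $V\in T_{U^\star}\sM$ and take the chart $\gamma$. Then $F_C(\gamma(tV))=t\,\sJ_C[V]+O(t^2)$ lies in $\sE_C$, so $\sJ_C[V]\in\sE_C$, since $\sE_C$ is a closed cone. The same holds for $-V$. This gives $\sJ_C[V]\in\sR_C\cap\sE_C\cap(-\sE_C)$, and it remains to show that this set is $\{0\}$. Take $z=\sJ_C[V]\in\mathcal{S}_\nu$ and $-z\in\mathcal{S}_{\nu'}$. For $\bar v\in\Lambda_C$ interior to the cube we have $\langle\nu-\bar v,z\rangle=0$ because $\nu-\bar v\in\sN_C=\sR_C^\perp$. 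On the other hand, $\langle\nu,z\rangle=\|z\|_1$ since $z\in\mathcal{S}_\nu$, while $\langle\bar v,z\rangle<\|z\|_1$ unless $z=0$, because $\bar v\in(-1,1)^{m_C}$. Together these force $z=0$. We conclude $\sJ_C[V]=0$, which is where the interior condition (I) enters.

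The main obstacle is the correction step in the first inclusion. Choosing $W$ so that the quadratic residual lands in the relative interior of a single cone, uniformly enough to absorb the $o(t^2)$ terms, requires combining the covering property of the vertex normal cones with the full-rank condition (N). I expect boundary cases, where the residual modulo $\sR_C$ falls on a common face of two cones, to need an extra perturbation or a limiting argument, using the closedness of the tangent cone.
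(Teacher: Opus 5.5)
Your inclusion $T_{U^\star}\sM\subseteq\ker\sJ_C$ is fine and follows the paper. You pair $z\in\sR_C\cap\mathcal{S}_\nu$ with $\nu-\bar v\in\sN_C$ and use $\bar v\in(-1,1)^{m_C}$; this direct proof that $\sR_C\cap\sE_C=\{0\}$ validly replaces the paper's appeal to Proposition~\ref{prop:unique-slice} with $a=0$. It uses only (I), and it makes the $-V$ step unnecessary.

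The gap is in the reverse inclusion. There is no freedom in ``choosing $W$ and $\nu$''. As $W$ varies, $\sQ_C(V)+\sJ_C[W]$ sweeps the affine space $\sQ_C(V)+\sR_C$. By Proposition~\ref{prop:unique-slice}, that space meets $\sE_C$ in the single point $\sS_{\Lambda_C}(\sQ_C(V))$, so whether it lies in some $\relint\mathcal{S}_\nu$ is dictated by $V$ alone, and often it does not. In symmetric sensing, take $V=U^\star K$ with $K$ skew-symmetric. Then $V\in\ker\sT\subseteq\ker\sJ_C$ and $VV^\top=\sT[\tfrac12 U^\star KK^\top]$, so $\sQ_C(V)\in\sR_C$ and the selected point is the apex $0$, which lies in no relative interior when $\dim\sN_C\ge1$.

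At such $V$ your implicit-function step breaks. Saturated coordinates where the selected point vanishes receive $O(t)$ corrections (after dividing by $t^2$) of uncontrolled sign. You cannot add them as extra equations, because $|\sI_<(\nu)|=\rank\sJ_C$ already exhausts the rank.

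The fallback via closedness of $T_{U^\star}\sM$ needs the good directions to be dense in $\ker\sJ_C$, and nothing in the hypotheses guarantees that. If $k=r$ and $\sA_C$ is injective on $\Im\sT$, every $V\in\ker\sJ_C$ has the skew form above, so there are no good directions at all. The lemma still applies in that setting.

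The missing idea is a correction that never commits to one cone. The paper uses the selection map itself. $\sS_{\Lambda_C}$ is piecewise linear, hence Lipschitz, so $\Gamma(b):=\sS_{\Lambda_C}(b)-b\in\sR_C$ is Lipschitz with $\Gamma(0)=0$. Write $F_C(U^\star+tV+\Delta)=F_C(U^\star+tV)+\sJ_C[\Delta]+\rho_{V,\Delta}(t)$. Solve
\[
\sJ_C[\Delta]=\Gamma\bigl(F_C(U^\star+tV)+\rho_{V,\Delta}(t)\bigr)
\]
for $\Delta\in(\ker\sJ_C)^\perp$ by a contraction on $\{\Delta:\|\Delta\|\le Mt^2\}$. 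This gives
\[
F_C(U^\star+tV+\Delta(t))=\sS_{\Lambda_C}\bigl(F_C(U^\star+tV)+\rho_{V,\Delta}(t)\bigr)\in\sE_C
\]
exactly, on whatever face it lands, with $\Delta(t)=O(t^2)$. Your tangent-cone conclusion (equivalently, the paper's local defining map $\Phi$) then gives $V\in T_{U^\star}\sM$.
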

We defer the proof of this lemma to Appendix~\ref{app::lemma-tangent} as it requires tools we shall develop later.\footnote{One might expect this to follow directly from the standard characterization of the tangent space as the kernel of a local defining map~\citep[Proposition~5.38]{lee2012introduction}. However, we cannot assume $\sJ_C$ to be a local defining map for $\sM$, so this standard result does not apply directly.}
Under the activity hypothesis, Section~\ref{subsec:general-second-order} locally represents $\sM$, along each tangent direction $V\in T_{U^\star}\sM$, by a curve starting from $U^\star$ with velocity $V$. Consider such a curve
\begin{align}\label{eq::curve}
\gamma_V(t)=U^\star+tV+t^2Z+o(t^2)
\end{align}
with some second-order correction $Z$ (cf.\ Lemma~\ref{lem:chart-second-order}). 
Defining the clean second-order residual 
\begin{equation}\label{eq:clean-quadratic}
\sQ_{C}(V)
:=\tfrac12 D^2F_C(U^\star)[V,V]
=\sA_C\bigl(\sL\left(VV^\top\right)\bigr)
\end{equation}
and passing the curve $\gamma_V(t)$ through $F_C$, we obtain
\begin{align}
F_C(\gamma_V(t)) &= F_C(U^\star)+t\sJ_C[V]+t^2\left(\sQ_{C}(V)+\sJ_{C}[Z]\right)+o(t^2)\nonumber\\
&=t^2\left(\sQ_{C}(V)+\sJ_{C}[Z]\right)+o(t^2)\in\sE_C,\label{eq::Fc-gamma}
\end{align}
where we use $F_C(U^\star)=0$ and $\sJ_C[V]=0$, the latter following from $V\in\ker\sJ_C$ by Lemma~\ref{lem:tangent-clean-null-equality}. Since $\sJ_C[Z]\in\sR_C$, the leading coefficient of $F_C(\gamma_V(t))$ lies in the affine subspace $\sQ_C(V)+\sR_C$ and in $\sE_C$. The curvature of $\sM$ along $\gamma$ enters only through $\sJ_C[Z]\in\sR_C$. The next proposition shows that this curvature term is immaterial: the affine subspace $\sQ_C(V)+\sR_C$ meets $\sE_C$ in a single point, determined by $\sQ_C(V)$ alone.

\begin{proposition}[Second-order cone selection]\label{prop:unique-slice}
Under Assumption~\ref{ass:affine-cube}, for every $a\in\RR^{m_C}$, the affine slice $(a+\sR_C)\cap\sE_C$ is a single point, denoted $\sS_{\Lambda_C}(a)$:
\begin{equation}\label{eq:selection-map}
	\{\sS_{\Lambda_C}(a)\}:=(a+\sR_{C})\cap \sE_C .
\end{equation}
The selection map $\sS_{\Lambda_C}(a)$ depends only on the projection $P_{\sN_{C}}a$, so it descends to a map $\sN_{C}\to\sE_C$. This descended map is a two-sided inverse of the projection $P_{\sN_{C}}$ restricted to $\sE_C$, so $P_{\sN_{C}}$ restricts to a bijection $\sE_C\to\sN_{C}$. Moreover $\sS_{\Lambda_C}$ is positively homogeneous: $\sS_{\Lambda_C}(\lambda a)=\lambda\,\sS_{\Lambda_C}(a)$ for all $\lambda\ge0$.
\end{proposition}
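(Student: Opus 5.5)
The plan is to obtain existence from the normal-cone formula of Proposition~\ref{prop:fan-structure} and uniqueness from a monotonicity argument on the cube together with condition~(N). The descent, bijection and homogeneity claims are then formal consequences. Throughout, write $P:=P_{\sN_C}$, and recall that $\sR_C=\sN_C^\perp$ and $\Lambda_C=(\bar v+\sN_C)\cap[-1,1]^{m_C}$.

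\emph{Existence.} Fix $a\in\RR^{m_C}$ and set $w:=Pa$, so that $a-w\in\sR_C$ and $a+\sR_C=w+\sR_C$. Since $\Lambda_C$ is a nonempty compact polytope, the linear functional $\inp{w}{\cdot}$ attains its maximum on $\Lambda_C$ at some $v\in\Lambda_C$. Hence $w\in N_{\Lambda_C}(v)$. Under condition~(I), Proposition~\ref{prop:fan-structure} gives $N_{\Lambda_C}(v)=\sR_C+\mathcal{S}_v$, so $w=r+s$ with $r\in\sR_C$ and $s\in\mathcal{S}_v\subseteq\sE_C$. Then $s=w-r\in w+\sR_C=a+\sR_C$. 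Thus the slice $(a+\sR_C)\cap\sE_C$ is nonempty.

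\emph{Uniqueness.} This is the main step. Suppose $s\in\mathcal{S}_v$ and $s'\in\mathcal{S}_{v'}$ with $v,v'\in\Lambda_C$ and $s-s'\in\sR_C$. Because $v-v'\in\sN_C$, we have $\inp{s-s'}{v-v'}=0$.
\begin{itemize}
\item Coordinatewise, $s_i(v_i-v'_i)\ge0$. Indeed, $s_i>0$ forces $v_i=1\ge v'_i$, and $s_i<0$ forces $v_i=-1\le v_i'$. Symmetrically, $s'_i(v'_i-v_i)\ge0$.
\item Therefore $0=\inp{s}{v-v'}+\inp{s'}{v'-v}$ is a sum of nonnegative terms, so every term vanishes.
\item Consequently, whenever $s_i\neq0$ we get $v'_i=v_i=\sign s_i$, which gives $s\in\mathcal{S}_{v'}$. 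Likewise $s'\in\mathcal{S}_v$.
\end{itemize}
Now let $\nu$ be a vertex of the smallest face of $\Lambda_C$ containing $v$. As in the proof of Proposition~\ref{prop:fan-structure}, $\mathcal{S}_v\subseteq\mathcal{S}_\nu$, so both $s,s'\in\mathcal{S}_\nu$. Hence $s-s'\in\sR_C$ is supported on the saturated coordinates of $\nu$.

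Write $s-s'=\sJ_C[V]$. Then $\sJ_{C,\sI_<(\nu)}[V]=0$. Condition~(N) gives $\rank\sJ_{C,\sI_<(\nu)}=\rank\sJ_C$, and since $\ker\sJ_C\subseteq\ker\sJ_{C,\sI_<(\nu)}$, the two kernels coincide. Therefore $\sJ_C[V]=0$, that is, $s=s'$. I expect this use of (N), after passing from $v$ to a vertex $\nu$, to be the delicate point. Without it, $\sR_C$ could meet the span of a sign cone nontrivially, and uniqueness would fail.

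\emph{Remaining claims.} Since $a+\sR_C$ depends only on $Pa$, the point $\sS_{\Lambda_C}(a)$ depends only on $Pa$, so the map descends to $\sN_C\to\sE_C$. For $e\in\sE_C$, $e$ itself lies in $(e+\sR_C)\cap\sE_C$, so uniqueness gives $\sS_{\Lambda_C}(Pe)=e$. Conversely, $\sS_{\Lambda_C}(w)\in w+\sR_C$, so $P\,\sS_{\Lambda_C}(w)=w$ for $w\in\sN_C$. Thus the descended map is a two-sided inverse of $P|_{\sE_C}$, which is therefore a bijection $\sE_C\to\sN_C$. For homogeneity with $\lambda>0$, $\sE_C$ is a union of cones and $\sR_C$ is a subspace, so $\lambda\,\sS_{\Lambda_C}(a)\in(\lambda a+\sR_C)\cap\sE_C$, and uniqueness gives $\sS_{\Lambda_C}(\lambda a)=\lambda\,\sS_{\Lambda_C}(a)$. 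For $\lambda=0$, note $0\in\sE_C\cap\sR_C$, so $\sS_{\Lambda_C}(0)=0$.
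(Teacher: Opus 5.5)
Your proposal is correct and matches the paper's proof in all essentials. Existence comes from the normal-cone formula $N_{\Lambda_C}(v)=\sR_C+\mathcal{S}_v$ of Proposition~\ref{prop:fan-structure}, and uniqueness from the orthogonality identity $\langle s-s',v-v'\rangle=0$, which places both points in a common vertex cone where condition~(N) forces $s=s'$. The descent, bijectivity, and homogeneity claims are handled by the same formal arguments. The differences are cosmetic: you obtain normal-cone membership by maximizing $\langle P_{\sN_C}a,\cdot\rangle$ over the compact polytope, rather than citing that vertex normal cones cover $\sN_C$, and you run the cross-cone and within-cone uniqueness steps in the opposite order.
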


The proof of Proposition~\ref{prop:unique-slice} is presented in Appendix~\ref{app:unique-slice}.
It implies that $\sS_{\Lambda_C}(\sQ_C(V))$ depends only on the projection $P_{\sN_C}\sQ_C(V)$. Geometrically, this projection flattens the polyhedral fan $\sE_C$ along the first-order range $\sR_C=\Im\sJ_C$. By Proposition~\ref{prop:unique-slice}, $P_{\sN_C}$ restricts to a bijection $\sE_C\to\sN_C$ with inverse $\sS_{\Lambda_C}$, so no information is lost under projection. Each cone $\mathcal{S}_\nu$ maps to its image $P_{\sN_C}\mathcal{S}_\nu$, one per vertex $\nu$ of $\Lambda_C$, and these images partition $\sN_C$, meeting only along their boundaries.
Since $\Lambda_C$ lies in the affine subspace $\bar v+\sN_C$, let $N_{\Lambda_C}^{\sN_C}(\nu)\subset\sN_C$ denote the normal cone of $\Lambda_C$ at $\nu$ relative to $\bar v+\sN_C$. The normal cone formula of Proposition~\ref{prop:fan-structure} gives
\[
P_{\sN_C}\mathcal{S}_\nu = N_{\Lambda_C}^{\sN_C}(\nu),
\]
so flattening identifies each maximal cone of $\sE_C$ with a normal cone of $\Lambda_C$ in $\sN_C$, as illustrated in Figure~\ref{fig:projection-slice}. Consequently, determining which cone $\relint\mathcal{S}_\nu$ contains $\sS_{\Lambda_C}(\sQ_C(V))$ is equivalent to identifying which normal cone $N_{\Lambda_C}^{\sN_C}(\nu)$ contains $P_{\sN_C}\sQ_C(V)$, as formally established in the next lemma.

\begin{figure}[htbp]
\centering
\includegraphics{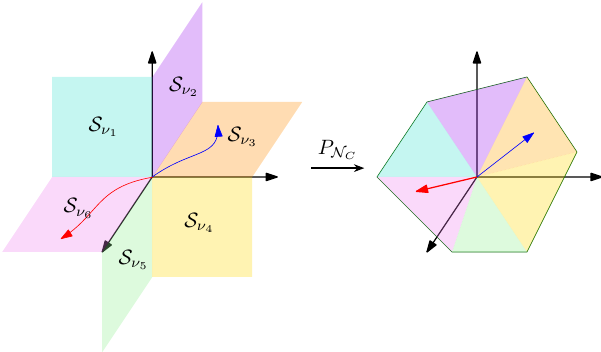}
\caption{Cone selection via flattening, continuing the hexagonal example of Figure~\ref{fig:ex_polyhedral_fan_hexagon}. \emph{Left:} the polyhedral fan $\sE_C\subset\RR^{m_C}$ with maximal cones $\mathcal{S}_{\nu_1},\dots,\mathcal{S}_{\nu_6}$, one per vertex $\nu_i$ of $\Lambda_C$, colored as in Figure~\ref{fig:ex_polyhedral_fan_hexagon}. Two tangent directions $V_1,V_2\in\ker\sJ_C$ generate curves $\gamma_{V_1},\gamma_{V_2}$ of the form \eqref{eq::curve}; by Proposition~\ref{prop:unique-slice}, their images $F_C(\gamma_{V_1})$ (blue) and $F_C(\gamma_{V_2})$ (red) leave the origin along the rays spanned by the selected points $\sS_{\Lambda_C}(\sQ_C(V_1))\in\relint\mathcal{S}_{\nu_3}$ and $\sS_{\Lambda_C}(\sQ_C(V_2))\in\relint\mathcal{S}_{\nu_6}$. \emph{Right:} the image of $\sE_C$ under the projection $P_{\sN_C}$ (arrow between the panels), inside $\sN_C$, with $\Lambda_C$ overlaid (dark green, translated from $\bar v+\sN_C$ to $\sN_C$). Each cone $\mathcal{S}_{\nu_i}$ flattens to the normal cone $N_{\Lambda_C}^{\sN_C}(\nu_i)$ at the corresponding vertex, drawn in the matching color, and these cover $\sN_C$. By Lemma~\ref{lem:branch-correspondence}, the cone selected along $V_j$ is the one whose normal cone contains the projected direction $P_{\sN_C}\sQ_C(V_j)$: the blue and red arrows depict $P_{\sN_C}\sQ_C(V_1)\in\relint N_{\Lambda_C}^{\sN_C}(\nu_3)$ and $P_{\sN_C}\sQ_C(V_2)\in\relint N_{\Lambda_C}^{\sN_C}(\nu_6)$, respectively, reducing cone selection from $\RR^{m_C}$ to the lower-dimensional space $\sN_C$.}
\label{fig:projection-slice}
\end{figure}
\begin{lemma}[Projected cone selection]\label{lem:branch-correspondence}
Under Assumption~\ref{ass:affine-cube}, let $\nu$ be a vertex of $\Lambda_C$ and $a\in\RR^{m_C}$. Then
\[
\sS_{\Lambda_C}(a)\in\relint \mathcal{S}_\nu
\qquad\Longleftrightarrow\qquad
P_{\sN_{C}}a\in\relint N_{\Lambda_C}^{\sN_{C}}(\nu).
\]
\end{lemma}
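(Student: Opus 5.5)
The plan is to show that the restriction $P_{\sN_C}|_{\mathcal{S}_\nu}$ is a linear isomorphism from $\mathcal{S}_\nu$ onto $N^{\sN_C}_{\Lambda_C}(\nu)$ that maps relative interior to relative interior. Combined with the bijectivity statement of Proposition~\ref{prop:unique-slice}, this gives both directions of the equivalence.

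\emph{First step: the image of the cone.} Proposition~\ref{prop:fan-structure} gives $N_{\Lambda_C}(\nu)=\sR_C+\mathcal{S}_\nu$, under condition~(I). The normal cone of $\Lambda_C$ taken relative to the affine hull $\bar v+\sN_C$ is the projection of the full normal cone onto $\sN_C$. Since $P_{\sN_C}\sR_C=0$, this yields
\[
N^{\sN_C}_{\Lambda_C}(\nu)=P_{\sN_C}N_{\Lambda_C}(\nu)=P_{\sN_C}\mathcal{S}_\nu .
\]

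\emph{Second step: injectivity on the span.} I will show $\Span\mathcal{S}_\nu\cap\sR_C=\{0\}$. By Proposition~\ref{prop:unique-slice}, $P_{\sN_C}$ is injective on $\sE_C\supseteq\mathcal{S}_\nu$; I need this on the linear span $\Span\mathcal{S}_\nu=\{z:z_i=0 \text{ for } i\in\sI_<(\nu)\}$.

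\begin{itemize}
\item Suppose $z\in\Span\mathcal{S}_\nu\cap\sR_C$. Write $z=\sJ_C[W]$ for some $W$.
\item Then $\sJ_{C,\sI_<(\nu)}[W]=0$.
\item Condition~(N) says the restriction to the free coordinates has the same rank as $\sJ_C$, so $\ker\sJ_{C,\sI_<(\nu)}=\ker\sJ_C$.
\item Hence $\sJ_C[W]=0$, that is, $z=0$.
\end{itemize}

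So $P_{\sN_C}$ is injective on $\Span\mathcal{S}_\nu$, whose dimension is $\dim\sN_C$ by Proposition~\ref{prop:fan-structure}. It is therefore a linear isomorphism $\Span\mathcal{S}_\nu\to\sN_C$. A linear isomorphism carries relative interiors of convex cones to relative interiors, which gives
\[
P_{\sN_C}(\relint\mathcal{S}_\nu)=\relint N^{\sN_C}_{\Lambda_C}(\nu).
\]

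\emph{Third step: concluding both directions.} Set $s:=\sS_{\Lambda_C}(a)$. Then $s\in\sE_C$ and $P_{\sN_C}s=P_{\sN_C}a$, because $s-a\in\sR_C$.

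\begin{itemize}
\item ($\Rightarrow$) If $s\in\relint\mathcal{S}_\nu$, the second step immediately gives $P_{\sN_C}a=P_{\sN_C}s\in\relint N^{\sN_C}_{\Lambda_C}(\nu)$.
\item ($\Leftarrow$) If $P_{\sN_C}a\in\relint N^{\sN_C}_{\Lambda_C}(\nu)$, the second step provides $s'\in\relint\mathcal{S}_\nu\subseteq\sE_C$ with $P_{\sN_C}s'=P_{\sN_C}a$. By Proposition~\ref{prop:unique-slice}, $P_{\sN_C}$ is a bijection $\sE_C\to\sN_C$ with inverse $\sS_{\Lambda_C}$. Hence $s'=\sS_{\Lambda_C}(P_{\sN_C}a)=\sS_{\Lambda_C}(a)=s$.
\end{itemize}

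The main obstacle is the first step. The identification of the relative normal cone with $P_{\sN_C}N_{\Lambda_C}(\nu)$ should be stated carefully. For $w\in\sN_C$, the inequalities $\inp{w}{\lambda-\nu}\le0$ for all $\lambda\in\Lambda_C$ involve only differences $\lambda-\nu\in\sN_C$. Hence $w+r$ is a normal to $\Lambda_C$ at $\nu$ for any $r\in\sR_C$ if and only if $w$ is. This gives $N_{\Lambda_C}(\nu)=N^{\sN_C}_{\Lambda_C}(\nu)+\sR_C$, and projecting onto $\sN_C$ yields the claim. The remaining steps are linear algebra, with condition~(N) used exactly once in the second step.
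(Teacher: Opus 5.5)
Your proof is correct and follows the paper's argument. You identify $P_{\sN_C}\mathcal{S}_\nu$ with $N^{\sN_C}_{\Lambda_C}(\nu)$ via $N_{\Lambda_C}(\nu)=\sR_C+\mathcal{S}_\nu$, show that $P_{\sN_C}$ is a linear isomorphism on $\Span\mathcal{S}_\nu$ so that relative interiors correspond, and transfer this to $\sS_{\Lambda_C}(a)$ using $P_{\sN_C}\sS_{\Lambda_C}(a)=P_{\sN_C}a$ together with injectivity of $P_{\sN_C}$ on $\sE_C$. The differences are minor: you get injectivity on $\Span\mathcal{S}_\nu$ directly from condition~(N), where the paper deduces it from injectivity on $\sE_C$ via $\Span\mathcal{S}_\nu=\mathcal{S}_\nu-\mathcal{S}_\nu$ (the same rank argument underlies both), and you spell out the $(\Leftarrow)$ direction that the paper leaves implicit.
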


\begin{proof}
By Proposition~\ref{prop:fan-structure}, $N_\Lambda(\nu)=\sR_{C}+\mathcal{S}_\nu$. Therefore, $P_{\sN_{C}}\mathcal S_\nu
=N_{\Lambda_C}(\nu)\cap\sN_{C}
=N_{\Lambda_C}^{\sN_{C}}(\nu)$. By the linearity of $P_{\sN_{C}}$, we have
\[
P_{\sN_{C}} \Span \mathcal{S}_\nu = \Span P_{\sN_{C}} \mathcal{S}_\nu = \Span N_{\Lambda_C}^{\sN_{C}}(\nu),
\]
implying that $P_{\sN_{C}}$ is surjective. Additionally, note that $P_{\sN_{C}}$ is injective on $\sE_C$, and hence on $\mathcal{S}_\nu\subseteq \sE_C$, due to Proposition~\ref{prop:unique-slice}. Since $\mathcal{S}_\nu$ is a convex cone, $\Span \mathcal{S}_\nu=\mathcal{S}_\nu-\mathcal{S}_\nu$. If $h=z_1-z_2\in\Span \mathcal{S}_\nu$ satisfies $P_{\sN_{C}} h=0$, then $P_{\sN_{C}} z_1=P_{\sN_{C}} z_2$, and the injectivity of $P_{\sN_{C}}$ on $\mathcal{S}_\nu$ gives $z_1=z_2$, hence $h=0$. So $P_{\sN_{C}}$ is injective on $\Span \mathcal{S}_\nu$, and therefore a linear isomorphism onto $\Span N_{\Lambda_C}^{\sN_C}(\nu)$. Thus, it maps the relative interior of $\mathcal{S}_\nu$ onto the relative interior of $N_{\Lambda_C}^{\sN_C}(\nu)$. That is, for any $z\in\sE_C$,
\[
z\in\relint\mathcal S_\nu
\quad\Longleftrightarrow\quad
P_{\sN_{C}}z
\in\relint N_{\Lambda_C}^{\sN_{C}}(\nu).
\]
Now take $z=\sS_\Lambda(a)$. Since
$\sS_{\Lambda_C}(a)\in a+\sR_{C}$, $P_{\sN_{C}}\sS_{\Lambda_C}(a)
=P_{\sN_{C}}a.$ Substituting this identity into the preceding equivalence concludes the proof. 
\end{proof}

Armed with these results, we now show that, under the activity hypothesis, the two conditions in Theorem~\ref{thm:branching-criterion} follow from cone selection: accessibility of the strata (condition~(a)) and incompatibility of their branch quadratics (condition~(b)). We begin with condition~(a). The following lemma shows that $P_{\sN_C}\sQ_C(V)\in\relint N_{\Lambda_C}^{\sN_C}(\nu)$ for $V$ ranging over a relatively open cone in $\ker\sJ_C$ is sufficient to guarantee accessibility of the stratum $\relint\mathcal{S}_\nu$.

\begin{lemma}[Accessibility of a vertex cone]\label{lem:second-order-accessibility}
Fix a critical point $U^\star$ of $f_{\mathrm{LR}}$ satisfying $U^\star{U^\star}^\top=X^\star$, and suppose that $\sM=F_C^{-1}(\sE_C)$ is a $C^2$ identifiable manifold for $f_{\mathrm{LR}}$ at $U^\star$ for $0$. Under Assumption~\ref{ass:affine-cube}, suppose there exists a vertex $\nu$ of $\Lambda_C$, a linear subspace $\sW\subseteq\ker\sJ_{C}$, and a nonempty relatively open cone $\sC\subseteq \sW$ such that
\[
P_{\sN_{C}}\sQ_{C}(V)	\in\relint N_{\Lambda_C}^{\sN_{C}}(\nu),\quad \forall V\in \sC.
\]
Then, $ \relint\mathcal{S}_\nu$ is accessible from $\sM$ at $U^\star$ along $\sW$.
\end{lemma}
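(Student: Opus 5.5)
We argue directly from Definition~\ref{def:accessible-stratum}: we show that every unit $V\in\sC$ has $F_C(\gamma(tV))\in\relint\mathcal{S}_\nu$ for small $t>0$. The stratum in question is $\relint\mathcal{S}_\nu$, which under Assumption~\ref{ass:affine-cube} is a top-dimensional stratum of $\sE_C$ by Proposition~\ref{prop:fan-structure}. We may shrink $\sC$ to a smaller nonempty relatively open cone whose closure $\cl\sC\setminus\{0\}$ still lies in the original cone; this makes compactness of $\cl\sC\cap\{\|V\|=1\}$ available if needed. Lemma~\ref{lem:tangent-clean-null-equality} gives $\sW\subseteq\ker\sJ_C=T_{U^\star}\sM$, so the chart $\gamma$ of Lemma~\ref{lem:chart-second-order} is defined along these directions.

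\textbf{Step 1: expansion and the selected point.} Fix a unit $V\in\sC$. Expand $F_C$ along the chart as in \eqref{eq::Fc-gamma}, with $Z=Z_{\sM}(V)$:
\[
F_C(\gamma(tV))=t^2 a_V+o(t^2),\qquad a_V:=\sQ_C(V)+\sJ_C[Z_{\sM}(V)].
\]
Here $F_C(U^\star)=0$ and $\sJ_C[V]=0$. Since $\gamma(tV)\in\sM=F_C^{-1}(\sE_C)$, we have $F_C(\gamma(tV))/t^2\in\sE_C$, because $\sE_C$ is a cone. The set $\sE_C$ is closed, so letting $t\downarrow0$ gives $a_V\in\sE_C$. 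We also have $a_V\in\sQ_C(V)+\sR_C$. Proposition~\ref{prop:unique-slice} then forces $a_V=\sS_{\Lambda_C}(\sQ_C(V))$. By hypothesis and Lemma~\ref{lem:branch-correspondence}, $a_V\in\relint\mathcal{S}_\nu$.

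\textbf{Step 2: from the leading term to membership for small $t$.} This is the main obstacle. Knowing $a_V\in\relint\mathcal{S}_\nu$ with an $o(t^2)$ error only places $F_C(\gamma(tV))/t^2$ in a small neighbourhood of $a_V$ in $\RR^{m_C}$, not inside $\mathcal{S}_\nu$. We close the gap using the fact that the point lies in $\sE_C$ exactly. The plan is to show that near $a_V$ the fan $\sE_C$ coincides with $\mathcal{S}_\nu$.

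We use the bijection $P_{\sN_C}:\sE_C\to\sN_C$ from Proposition~\ref{prop:unique-slice}. Its inverse $\sS_{\Lambda_C}$ is continuous: it is piecewise linear, given on each normal cone $N^{\sN_C}_{\Lambda_C}(\nu')$ by the inverse of the linear isomorphism from the proof of Lemma~\ref{lem:branch-correspondence}. Moreover, $P_{\sN_C}a_V$ lies in $\relint N^{\sN_C}_{\Lambda_C}(\nu)$, which is open in $\sN_C$ because the cone is full-dimensional. So for $w:=P_{\sN_C}F_C(\gamma(tV))/t^2$, which is close to $P_{\sN_C}a_V$, we get $w\in\relint N^{\sN_C}_{\Lambda_C}(\nu)$. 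The point $F_C(\gamma(tV))/t^2$ lies in $\sE_C$ and projects to $w$. Injectivity gives $F_C(\gamma(tV))/t^2=\sS_{\Lambda_C}(w)$, which lies in $\relint\mathcal{S}_\nu$ by Lemma~\ref{lem:branch-correspondence}.

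Since $\relint\mathcal{S}_\nu$ is a cone, $F_C(\gamma(tV))\in\relint\mathcal{S}_\nu$ for all $0<t<\eps(V)$.

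\textbf{Step 3: translating back to $F$.} Accessibility in Definition~\ref{def:accessible-stratum} refers to strata of the stratification of $\sE$ for the full map $F$. Proposition~\ref{prop:clean-reduction} and its proof reduce this to the clean coordinates: the noisy coordinates keep fixed nonzero signs near $U^\star$, so they contribute a fixed sign pattern. The stratum of the full problem reached is therefore $\relint\mathcal{S}_\nu$, up to the product with this fixed sign region. Taking the accessibility cone to be $\sC$ itself completes the argument.
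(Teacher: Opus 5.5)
Your proof is correct, and Steps 1 and 3 coincide with the paper's proof. The paper handles the passage from clean to full coordinates just as briefly, by applying the accessibility definition to the reduced representation. The one real difference is in Step 2, where you pass from $a_V\in\relint\mathcal{S}_\nu$ to $z_t:=t^{-2}F_C(\gamma(tV))\in\relint\mathcal{S}_\nu$ for small $t>0$.

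The paper stays in $\RR^{m_C}$ and uses the polyhedral-fan structure of Proposition~\ref{prop:fan-structure}. The relative boundary of $\mathcal{S}_\nu$, together with all other maximal cones, is a closed set that misses $a_V$. Hence every point of $\sE_C$ close enough to $a_V$, in particular $z_t$, lies in $\relint\mathcal{S}_\nu$.

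You instead project to $\sN_C$. Since $P_{\sN_C}z_t\to P_{\sN_C}a_V$, and $\relint N^{\sN_C}_{\Lambda_C}(\nu)$ is open in $\sN_C$, you get $P_{\sN_C}z_t\in\relint N^{\sN_C}_{\Lambda_C}(\nu)$ for small $t$. Openness holds because vertex normal cones of the full-dimensional polytope $\Lambda_C$ are full-dimensional in $\sN_C$. Then Lemma~\ref{lem:branch-correspondence} at $a=z_t$, with $\sS_{\Lambda_C}(z_t)=z_t$ because $z_t\in\sE_C$, gives the conclusion.

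Your route reuses the cone-selection equivalence and the bijection of Proposition~\ref{prop:unique-slice}. It therefore never needs the fact that distinct maximal cones meet only along proper common faces, but it does need full-dimensionality of the vertex normal cones. The paper's route needs only the ambient topology of the fan.

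Two of your remarks are superfluous. Continuity of $\sS_{\Lambda_C}$ is never used; only continuity of $P_{\sN_C}$ is. Shrinking $\sC$ for compactness is also unnecessary, because Definition~\ref{def:accessible-stratum} lets $\eps(V)$ depend on $V$.
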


\begin{proof}
Let $\gamma$ be the chart in~\eqref{eq:general-chart}, and fix a unit vector $V\in\sC$. By Lemma~\ref{lem:tangent-clean-null-equality}, $V\in T_{U^\star}\sM=\ker\sJ_C$. Hence, using the homogeneity of $Z_{\sM}$ and the definition~\eqref{eq:clean-quadratic},
\begin{align*}
F_C(\gamma(tV))
&=t^2\bigl(\sQ_C(V)+\sJ_C[Z_{\sM}(V)]\bigr)+o(t^2)\\
&=:t^2\eta_C(V)+o(t^2).
\end{align*}
Here $\eta_C(V):=(\eta_{\sM}(V))_{i\in\Omega_C}$ is the clean block of the coefficient in~\eqref{eq:eta-main}.
For $t>0$, set $z_t:=t^{-2}F_C(\gamma(tV))$. Since $\gamma(tV)\in\sM=F_C^{-1}(\sE_C)$ and $\sE_C$ is a cone, we have $z_t\in\sE_C$. Moreover, $z_t\to\eta_C(V)$ as $t\downarrow0$. Proposition~\ref{prop:fan-structure} expresses $\sE_C$ as a finite union of closed polyhedral cones, so $\sE_C$ is closed and therefore $\eta_C(V)\in\sE_C$. On the other hand,
\[
\eta_C(V)-\sQ_C(V)=\sJ_C[Z_{\sM}(V)]\in\sR_C.
\]
Consequently,
\[
\eta_C(V)\in\bigl(\sQ_C(V)+\sR_C\bigr)\cap\sE_C.
\]
Proposition~\ref{prop:unique-slice} now yields
\begin{equation}\label{eq:eta-clean-selection}
\eta_C(V)=\sS_{\Lambda_C}(\sQ_C(V)).
\end{equation}
By the cone-selection hypothesis and Lemma~\ref{lem:branch-correspondence},
\[
\eta_C(V)\in\relint\mathcal{S}_\nu.
\]
Because the maximal cones of $\sE_C$ form a finite polyhedral fan, every point of $\sE_C$ sufficiently close to $\eta_C(V)$ also lies in $\relint\mathcal{S}_\nu$. Indeed, the relative boundary of $\mathcal{S}_\nu$ together with all other maximal cones is a finite union of closed polyhedral sets that does not contain $\eta_C(V)$. Since $z_t\in\sE_C$ and $z_t\to\eta_C(V)$, it follows that $z_t\in\relint\mathcal{S}_\nu$ for all sufficiently small $t>0$. Finally, relative interiors of cones are invariant under positive scaling, and hence
\[
F_C(\gamma(tV))=t^2z_t\in\relint\mathcal{S}_\nu
\]
for all sufficiently small $t>0$. Since $V\in\sC$ was arbitrary, Definition~\ref{def:accessible-stratum} applied to the reduced representation shows that $\relint\mathcal{S}_\nu$ is accessible from $\sM$ at $U^\star$ along $\sW$.
\end{proof}

Having established accessibility of the strata, it remains to verify that the branch quadratics are incompatible on them. As we show next, this incompatibility is in fact automatic: any two distinct strata necessarily carry incompatible branch quadratics.
To state the main result, define the \emph{boundary sign vector} $\sigma(\nu)\in\{-1,0,1\}^{m_C}$ by $\sigma(\nu)_i = \operatorname{sign}(\nu_i)$ on the saturated coordinates ($|\nu_i|=1$) and $\sigma(\nu)_i = 0$ on the free coordinates ($|\nu_i|<1$). Its support is the set of saturated coordinates and its zero set is $\sI_<(\nu)$. On the sign cone at $\nu$, it computes the $\ell_1$ norm,
\begin{equation}\label{eq:sigma-support}
\|z\|_1=\inp{\sigma(\nu)}{z}\qquad\text{for }z\in\relint \mathcal{S}_\nu.
\end{equation}

\begin{theorem}[Branching criterion via cone selection]\label{thm:spectrahedral-deterministic}
Fix a critical point $U^\star$ of $f_{\mathrm{LR}}$ satisfying $U^\star{U^\star}^\top=X^\star$. 
Under Assumption~\ref{ass:affine-cube}, suppose there exist two distinct vertices $\nu_1, \nu_2$ of $\Lambda_C$, a linear subspace $\sW\subseteq\ker\sJ_{C}$, and two nonempty relatively open cones $\sC_1, \sC_2\subseteq \sW$ such that
\[
P_{\sN_{C}}\sQ_{C}(V)	\in\relint N_{\Lambda_C}^{\sN_{C}}(\nu_j),\quad \forall V\in \sC_j\quad j=1,2.
\]
Then $f_{\mathrm{LR}}$ admits no $C^2$ identifiable manifold at $U^\star$ for $0\in \partial f(U^\star)$; that is, $U^\star$ is inactive.
\end{theorem}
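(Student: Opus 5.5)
The plan is to verify the hypotheses of Theorem~\ref{thm:branching-criterion}. Assume the activity hypothesis: $\sM=F_C^{-1}(\sE_C)$ is a $C^2$ identifiable manifold at $U^\star$ for $0$. By Proposition~\ref{prop:clean-reduction} this is the locally minimal identifiable set, and by Lemma~\ref{lem:tangent-clean-null-equality} we have $T_{U^\star}\sM=\ker\sJ_C$, so the given $\sW\subseteq\ker\sJ_C$ is an admissible subspace of the tangent space. The stratification is the natural one from Proposition~\ref{prop:fan-structure}. Its top strata are $\relint\mathcal{S}_\nu$, and on each such stratum the active set of $\|\cdot\|_1$ in the clean coordinates is constant: the saturated coordinates carry the signs $\sigma(\nu)$, and the free coordinates vanish identically. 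The noisy coordinates have fixed signs $\sign(\varepsilon_N)$ near $U^\star$. Condition (a) then follows by applying Lemma~\ref{lem:second-order-accessibility} twice, once with $(\nu_1,\sC_1)$ and once with $(\nu_2,\sC_2)$.

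For condition (b), I avoid the general $\zeta_i$ bookkeeping and compute the branch quadratics directly from the expansion of $f$ along the chart, as in the proof of Proposition~\ref{prop:branch-compatibility}. Take a unit $V\in\sC_j$. The proof of Lemma~\ref{lem:second-order-accessibility} gives $F_C(\gamma(tV))=t^2\eta_C(V)+o(t^2)$ with $\eta_C(V)=\sS_{\Lambda_C}(\sQ_C(V))\in\relint\mathcal{S}_{\nu_j}$. Then \eqref{eq:sigma-support} yields $\|F_C(\gamma(tV))\|_1=t^2\inp{\sigma(\nu_j)}{\eta_C(V)}+o(t^2)$. The noisy block contributes $\inp{\sign(\varepsilon_N)}{F_N(\gamma(tV))}$ exactly for small $t$, and this is the same smooth function on both branches. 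Its first-order term cancels with the clean part by criticality, because $\sJ_C[V]=0$ and Proposition~\ref{prop:taylor-expansion}(i) applies. So the branch quadratic of $\relint\mathcal{S}_{\nu_j}$ along $\sW$ is
\[
q_j(V)=\inp{\sigma(\nu_j)}{\eta_C(V)}+\kappa(V),
\]
where $\kappa$ is a common quadratic coming from the noisy block and from the curvature $Z_\sM$.

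The main obstacle is showing $q_1\not\equiv q_2$ on $\sW$. My approach is to write $\inp{\sigma(\nu_j)}{\eta_C(V)}$ in terms of the fixed vector $\sQ_C(V)+\sJ_C[Z_\sM(V)]$. Since $\eta_C(V)=\sQ_C(V)+\sJ_C[Z_\sM(V)]$ on $\sC_j$, we get
\[
q_1(V)-q_2(V)=\inp{\sigma(\nu_1)-\sigma(\nu_2)}{\eta_C(V)}
\]
on the respective cones. I then exploit the fact that $\nu_j$ and $\sigma(\nu_j)$ agree on saturated coordinates, and that $\nu_j-\nu_{j'}\in\sN_C$, so $\inp{\nu_1-\nu_2}{\eta}=\inp{\nu_1-\nu_2}{P_{\sN_C}\sQ_C(V)}$ depends only on the projection.

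Suppose, toward a contradiction, that $q_1\equiv q_2=q$ on $\sW$. On $\sC_1$ we have $q(V)-\kappa(V)=\inp{\nu_1}{\eta_C(V)}=\inp{\nu_1}{P_{\sN_C}\sQ_C(V)}+\inp{\bar v}{\cdot}$-type terms. Here I use that $\eta\in\mathcal{S}_{\nu_1}$ forces $\inp{\sigma(\nu_1)}{\eta}=\inp{\nu_1}{\eta}$, since free coordinates of $\eta$ vanish. So on $\sC_1$ the quantity $q-\kappa$ equals the quadratic $V\mapsto\inp{\nu_1}{\sQ_C(V)+\sJ_C[Z_\sM(V)]}$. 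By Lemma~\ref{lem:quadratic-open-cone} this identity extends to all of $\sW$, and similarly on $\sC_2$ with $\nu_2$. Subtracting gives $\inp{\nu_1-\nu_2}{P_{\sN_C}\sQ_C(V)}\equiv0$ on $\sW$. Evaluating on $\sC_1$, the vector $P_{\sN_C}\sQ_C(V)$ lies in $\relint N^{\sN_C}_{\Lambda_C}(\nu_1)$, which is a full-dimensional cone at a vertex. Such a normal vector strictly separates $\nu_1$ from every other point of $\Lambda_C$, so $\inp{\nu_1-\nu_2}{P_{\sN_C}\sQ_C(V)}>0$. This is a contradiction, so (b) holds. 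Theorem~\ref{thm:branching-criterion} then yields inactivity. The delicate points I would check carefully are the cancellation of the common terms ($\bar v$, the noisy block, and curvature) and the strict-separation fact, which is where Assumption~\ref{ass:affine-cube} and the full-dimensionality of Proposition~\ref{prop:fan-structure} are used.
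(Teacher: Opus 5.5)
Your argument is correct. Up to the last step it coincides with the paper's proof. Accessibility comes from Lemma~\ref{lem:second-order-accessibility} applied to $(\nu_1,\sC_1)$ and $(\nu_2,\sC_2)$. The branch quadratics are identified as $\zeta_{\sZ_j,\sW}(V)=\inp{\sigma(\nu_j)}{\eta_C(V)}+q_N(V)$ on $\sW$, where $q_N$ is the noisy-block quadratic common to both branches. (The noisy block is $\inp{-\sign(\varepsilon_N)}{F_N}$, not $\inp{\sign(\varepsilon_N)}{F_N}$, but this is immaterial since it is common.)

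The two routes differ in how incompatibility is certified.

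\textbf{The paper's route.} It works on $\sC_1$ with H\"older's inequality: $\inp{\sigma(\nu_1)}{\eta_C(V)}=\|\eta_C(V)\|_1\ge\inp{\sigma(\nu_2)}{\eta_C(V)}$. Equality would force $\eta_C(V)\in\mathcal{S}_{\nu_2}$. This is excluded because distinct maximal cones of the fan in Proposition~\ref{prop:fan-structure} satisfy $\relint\mathcal{S}_{\nu_1}\cap\mathcal{S}_{\nu_2}=\emptyset$.

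\textbf{Your route.} You trade $\sigma(\nu_j)$ for $\nu_j$. This is exact on $\sC_j$, because $\eta_C(V)$ vanishes on the free coordinates of $\nu_j$, and it extends to all of $\sW$ by Lemma~\ref{lem:quadratic-open-cone}. You then use $\nu_1-\nu_2\in\sN_C\perp\sR_C$ to annihilate the unknown curvature contribution $\sJ_C[Z_\sM(V)]$. This yields the global identity $\zeta_{\sZ_1,\sW}-\zeta_{\sZ_2,\sW}=\inp{\nu_1-\nu_2}{P_{\sN_C}\sQ_C(\cdot)}$ on $\sW$. Positivity on $\sC_1$ follows because an interior normal vector at a vertex exposes that vertex alone.

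\textbf{What each buys.} Your route gives an explicit, curvature-free formula for the discrepancy, tying incompatibility directly to the cone-selection hypothesis. The same formula is negative on $\sC_2$. Indeed, since $\sJ_C^*[\nu_j]=\sJ_N^*[\sign(\varepsilon_N)]$, each branch quadratic is the Lagrangian Hessian $\inp{\nu_j}{\sQ_C(V)}-\inp{\sign(\varepsilon_N)}{\tfrac12 D^2F_N(U^\star)[V,V]}$. Your route also does not need the irredundancy of the fan at this step. The paper's route is shorter and stays entirely within the sign-cone geometry of $\|\cdot\|_1$, without appealing to the normal fan of $\Lambda_C$.

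Two cosmetic remarks. The proof by contradiction is unnecessary, since the identity holds outright. Your ``$\inp{\bar v}{\cdot}$-type terms'' should be stated precisely as the common term $\inp{P_{\sR_C}\bar v}{\eta_C(V)}$, although your subtraction step does not depend on it.
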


\begin{proof}

By Theorem~\ref{thm:branching-criterion}, it suffices to show that the activity hypothesis implies accessibility of two strata and incompatibility of their branch quadratics. We therefore work under the activity hypothesis throughout the remainder of the proof.

Since $P_{\sN_C}\sQ_C(V) \in \relint N_{\Lambda_C}^{\sN_C}(\nu_j)$ for $V \in \sC_j$, Lemma~\ref{lem:second-order-accessibility} shows that the strata $\sZ_j := \relint \mathcal{S}_{\nu_j}$, $j = 1,2$, are accessible. It remains to prove the incompatibility of the branch quadratics on $\sZ_1$ and $\sZ_2$. Fix a piece $\tau_j$ active on $\sZ_j$, i.e.\ $\tau_j \in \sI_{\sZ_j}$, and let $\zeta_{\sZ_j,\sW} := \zeta_{\tau_j}|_{\sW}$ denote the corresponding branch quadratic, as in Lemma~\ref{lem:stratum-quadratic}, with $\zeta_{\tau_j}$ defined in~\eqref{eq:zeta-main}. Since $f_{\mathrm{LR}} = g \circ F$ with $g$ a finite maximum of linear functions, the Hessian term for $g_{\tau_j}$ in~\eqref{eq:zeta-main} vanishes, so
\[
\zeta_{\tau_j}(V)=\inp{\tau_j}{\eta_{\sM}(V)}=\inp{\tau_{j,C}}{\eta_{C}(V)}+ \underbrace{\inp{-\sign(\varepsilon_N)}{\eta_N(V)}}_{:=q_N(V)},
\]
where $\tau_{j,C} := (\tau_j)_{i \in \Omega_C}$ and $\eta_C := (\eta_{\sM})_{i \in \Omega_C}$ are the restrictions of $\tau_j$ and $\eta_{\sM}$ to the clean indices, and $\eta_N := (\eta_{\sM})_{i \in \Omega_N}$ is the restriction to the noisy indices. Note that $q_N$ is independent of the choice of active piece $\tau_j$. By~\eqref{eq::Fc-gamma} and Proposition~\ref{prop:unique-slice},
\[
\eta_C(V) = \sQ_C(V)+\sJ_C[Z] = \sS_{\Lambda_C}(\sQ_C(V)),
\]
The assumption $P_{\sN_{C}}\sQ_{C}(V)	\in\relint N_{\Lambda_C}^{\sN_{C}}(\nu_j)$ for $V\in \sC_j$ combined with Lemma~\ref{lem:branch-correspondence} implies $\sS_{\Lambda_C}(\sQ_C(V))\in \relint\mathcal{S}_{\nu_j}$ for $V\in \sC_j$, and hence $\eta_C(V) \in \relint\mathcal{S}_{\nu_j}$ for $V\in \sC_j$. In particular $\eta_C(V)$ vanishes on the free coordinates of $\nu_j$, while $\tau_{j,C}$ agrees with $\sigma(\nu_j)$ on the saturated ones. Consequently,
\[
\inp{\tau_{j,C}}{\eta_C(V)} = \inp{\sigma(\nu_j)}{\eta_C(V)} =: \psi_{\nu_j}(V), \qquad \forall\, V \in \sC_j,\ j=1,2,
\]
and therefore
\[
\zeta_{\tau_j}(V) = q_N(V) + \psi_{\nu_j}(V), \qquad \forall\, V \in \sC_j,\ j = 1,2.
\]
Since $\zeta_{\sZ_j,\sW}$ and $\zeta_{\tau_j}$ are both quadratic forms on $\sW$ agreeing on the nonempty relatively open cone $\sC_j$, Lemma~\ref{lem:quadratic-open-cone} shows they agree on all of $\sW$:
\[
\zeta_{\sZ_j,\sW}(V) = q_N(V) + \psi_{\nu_j}(V), \qquad \forall\, V \in \sW,\ j = 1,2.
\]
Hence $\zeta_{\sZ_1,\sW} - \zeta_{\sZ_2,\sW} = \psi_{\nu_1} - \psi_{\nu_2}$ on $\sW$, and it suffices to show $\psi_{\nu_1} \not\equiv \psi_{\nu_2}$ on $\sW$. For $V \in \sC_1$, since $\eta_C(V) \in \relint \mathcal{S}_{\nu_1}$, its support equals the saturated coordinates of $\nu_1$ with matching sign, so
\[
\psi_{\nu_1}(V) = \inp{\sigma(\nu_1)}{\eta_C(V)} = \|\eta_C(V)\|_1, \qquad \forall\, V \in \sC_1.
\]
Since $\sigma(\nu_2) \in \{-1,0,+1\}^{m_C}$, by H\"older's inequality,
\[
\psi_{\nu_2}(V) = \inp{\sigma(\nu_2)}{\eta_C(V)} \leq \|\eta_C(V)\|_1, \qquad \forall\, V \in \sC_1,
\]
with equality only if $\sigma(\nu_2)_i = \sign(\eta_C(V)_i)$ for every $i \in \supp(\eta_C(V))$, that is, only if every coordinate saturated by $\nu_1$ is also saturated by $\nu_2$ with the same sign, forcing $\eta_C(V) \in \mathcal{S}_{\nu_2}$. But $\mathcal{S}_{\nu_1}$ and $\mathcal{S}_{\nu_2}$ are distinct maximal cones of the polyhedral fan due to Proposition~\ref{prop:fan-structure}, so they meet only along a common proper face, and $\relint\mathcal{S}_{\nu_1}$ is disjoint from $\mathcal{S}_{\nu_2}$. Since $\eta_C(V) \in \relint\mathcal{S}_{\nu_1}$ for $V \in \sC_1$, we conclude $\eta_C(V) \notin \mathcal{S}_{\nu_2}$, and thus $\psi_{\nu_2}(V) < \psi_{\nu_1}(V)$ strictly for all $V \in \sC_1$.
As $\sC_1 \subseteq \sW$ is nonempty, this shows $\psi_{\nu_1} \not\equiv \psi_{\nu_2}$ on $\sW$. Thus the activity hypothesis implies the two incompatible accessible branches required by Theorem~\ref{thm:branching-criterion}, completing the proof.
\end{proof}

Theorem~\ref{thm:spectrahedral-deterministic} reduces the branching criterion to a cone-selection test in $\sN_{\mathrm c}$. The next subsection verifies that this test succeeds with high probability under appropriate random models.

\subsection{Probabilistic Analysis}\label{subsec:high-prob-results}

In this subsection, we show that the cone-selection test—and hence the branching criterion—succeeds with high probability for robust low-rank recovery under the random models considered here. Before turning to the formal analysis, we develop geometric intuition for why this should occur. This intuition applies directly to nonnegative sparse regression and symmetric matrix sensing. Asymmetric matrix sensing requires an analogous but slightly sharper argument, which we present in Appendix~\ref{app::cone-select-asym-hp}.

Suppose for now that the true solution $U^\star$ is a critical point of $f_{\mathrm{LR}}$ and that Assumption~\ref{ass:affine-cube} (the interior and nondegeneracy conditions) holds. Suppose further, purely for the sake of intuition, that there exist $q\geq 2$ distinct directions $V_1,\ldots,V_q\in\ker\sJ_C$ whose projected second-order residuals $P_{\sN_C}\sQ_C(V_1),\ldots,P_{\sN_C}\sQ_C(V_q)$ are spherically symmetric on $\sN_C$, i.e., their normalized directions are uniformly distributed on the unit sphere. Under this idealized randomness assumption, the cone-selection test succeeds unless all of these projected residuals lie in the relative interior of a single normal cone $N_{\Lambda_C}^{\sN_C}(\nu)$ for some vertex $\nu$ of $\Lambda_C$.\footnote{By Theorem~\ref{thm:spectrahedral-deterministic}, the cone-selection test in fact requires $P_{\sN_C}\sQ_C(V)\in\relint N_{\Lambda_C}^{\sN_C}(\nu)$ not just for a single $V$, but for all $V$ in some relatively open cone $\sC$. We suppress this distinction here for the sake of intuition.}

To estimate the probability of this event, first recall from Proposition~\ref{prop:fan-structure} that the clean multiplier polytope $\Lambda_C$ is full-dimensional in $\sN_C$. Hence its vertex normal cones $N_{\Lambda_C}^{\sN_C}(\nu)$, $\nu\in\operatorname{Vert}(\Lambda_C)$ are pointed~\citep[Theorem~2.3.2]{cox2024toric}. A pointed cone is always contained in some half-space. Consequently, for any two indices $1\leq i<j\leq q$, the probability that both $P_{\sN_C}\sQ_C(V_i)$ and $P_{\sN_C}\sQ_C(V_j)$ fall into the \emph{same} normal cone $N_{\Lambda_C}^{\sN_C}(\nu)$ is at most $1/2$. Consequently, the probability that all $q$ projected residuals land in the same cone is at most $2^{-(q-1)}$, and the cone selection test succeeds whenever this does not happen, giving
\[
\mathbb{P}(\text{cone-selection test succeeds}) \geq 1 -2^{-(q-1)}.
\]
This intuition is formalized in the following proposition. 
\begin{proposition}[Spherically symmetric projected residuals imply successful cone selection]
\label{lem:isotropic-branch-occupancy}
Fix a critical point $U^\star$ of $f_{\mathrm{LR}}$ satisfying $U^\star{U^\star}^\top=X^\star$. Suppose that Assumption~\ref{ass:affine-cube} holds and that
$\dim\sN_C\geq 1$. Let $q\geq 2$, and suppose there exist distinct
nonzero directions $V_1,\ldots,V_q\in\ker\sJ_C$ such that, conditional
on $\sJ_C$ and $\Lambda_C$, the projected residuals $P_{\sN_C}\sQ_C(V_1),\ldots,P_{\sN_C}\sQ_C(V_q)$
are independent and have continuous, spherically symmetric
distributions on $\sN_C$. Then, conditional on $\sJ_C$ and
$\Lambda_C$, with probability at least $1-2^{-(q-1)},$
the cone-selection test in
Theorem~\ref{thm:spectrahedral-deterministic} succeeds. Specifically,
there exist two distinct vertices $\nu_1,\nu_2$ of $\Lambda_C$, a
linear subspace $\sW\subseteq\ker\sJ_C$, and two nonempty relatively
open cones $\sC_1,\sC_2\subseteq\sW$ such that
\[
P_{\sN_C}\sQ_C(V)
\in
\relint N_{\Lambda_C}^{\sN_C}(\nu_j)
\qquad
\text{for every }V\in\sC_j,\quad j=1,2.
\]
\end{proposition}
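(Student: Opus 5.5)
Conditional on $\sJ_C$ and $\Lambda_C$, the polytope $\Lambda_C$ and its vertex normal cones are fixed. Only the projected residuals $w_i:=P_{\sN_C}\sQ_C(V_i)$ are random. The plan has two parts: a probabilistic part showing that, with probability at least $1-2^{-(q-1)}$, two of the $w_i$ land in the relative interiors of normal cones at distinct vertices; and a deterministic part upgrading these two pointwise selections to selections on relatively open cones $\sC_1,\sC_2$ inside a common subspace $\sW\subseteq\ker\sJ_C$.

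\emph{Probabilistic part.} By Proposition~\ref{prop:fan-structure}, $\Lambda_C$ is full-dimensional in $\bar v+\sN_C$. Hence the cones $N^{\sN_C}_{\Lambda_C}(\nu)$, $\nu\in\operatorname{Vert}(\Lambda_C)$, are finitely many, full-dimensional, pointed, and cover $\sN_C$. Their relative boundaries lie in finitely many hyperplanes of $\sN_C$, which have measure zero. Since each $w_i$ has a continuous distribution, almost surely every $w_i$ lies in $\relint N^{\sN_C}_{\Lambda_C}(\nu(i))$ for a unique vertex $\nu(i)$. We also need $\dim\sN_C\ge1$; if $\sN_C$ has a single vertex cone, that cone is all of $\sN_C$ and is not pointed, contradicting full-dimensionality of $\Lambda_C$ when $\dim\sN_C\ge 1$.

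The failure event is $\{\nu(1)=\dots=\nu(q)\}$. Condition on $w_1$ and let $\nu=\nu(1)$. A pointed cone satisfies $N^{\sN_C}_{\Lambda_C}(\nu)\subseteq\{u:\inp{a}{u}\geq 0\}$ for some $a\neq0$, so $\PP(w_i\in N^{\sN_C}_{\Lambda_C}(\nu))\le 1/2$ by spherical symmetry. Using independence over $i=2,\dots,q$, the failure probability is at most $2^{-(q-1)}$. Off this event there are indices $i\ne j$ with $\nu(i)\ne\nu(j)$.

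\emph{Deterministic upgrade.} Take $\sW:=\Span\{V_i,V_j\}\subseteq\ker\sJ_C$. The map $V\mapsto P_{\sN_C}\sQ_C(V)$ is a continuous quadratic map. Since $\relint N^{\sN_C}_{\Lambda_C}(\nu(i))$ is open in $\sN_C$ because the cone is full-dimensional, its preimage in $\sW$ is a relatively open set containing $V_i$. The set is invariant under positive scaling since $\sQ_C$ is $2$-homogeneous and the cone is a cone. Therefore the preimage contains a nonempty relatively open cone $\sC_1\ni V_i$, and likewise $\sC_2\ni V_j$. This gives the conclusion with $\nu_1=\nu(i)$ and $\nu_2=\nu(j)$.

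\emph{Main obstacle.} The delicate point is the measure-theoretic bookkeeping: showing that boundaries are null, and that the half-space bound holds conditionally on $w_1$ when the cone selected by $w_1$ is itself random. This is handled by conditioning on $w_1$ and using the independence of the $w_i$. The distinctness of the $V_i$ is not needed beyond ensuring that $\sW$ is a genuine subspace. If $V_i$ and $V_j$ were parallel they would give the same $w$ up to a positive scaling, or a negative scaling of $V$ that still yields the same $\sQ_C$. In that case a one-dimensional $\sW$ would still make the argument work, since $V_i$ and $V_j$ still lie in distinct open preimages.
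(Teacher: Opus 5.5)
Your proposal is correct and follows the paper's argument: boundaries of the vertex normal cones are null, pointedness bounds each per-vertex probability by $1/2$, independence gives the $2^{-(q-1)}$ bound, and the upgrade uses preimages in $\sW=\Span\{V_i,V_j\}$. The only differences are bookkeeping. You condition on $w_1$ and bound each remaining residual, whereas the paper sums $\sum_\nu\alpha_\nu^q$ over vertices. Your closing remark about parallel $V_i,V_j$ is unnecessary, since parallel directions give positively proportional residuals that cannot select distinct vertices, but it does no harm.
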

The proof is presented in Appendix~\ref{app:isotropic-branch-occupancy}.
For this result to materialize, three conditions must hold: (1) the true solution $U^\star$ is a critical point of $f_{\mathrm{LR}}$; (2) Assumption~\ref{ass:affine-cube} (nonempty relative interior and nondegeneracy of $\Lambda_C$) holds; and (3) the projected residuals $P_{\sN_C}\sQ_C(V_1),\ldots,P_{\sN_C}\sQ_C(V_q)$ are independent and spherically symmetric. 

Condition~(1) has already been established in the literature for all three models considered here. We now show that condition~(2) can be verified through a \emph{whitening procedure}. Define the linearized design map at $U^\star$ by
\[
\sT({U^\star}):\mathbb{R}^{n\times k}\to\mathbb{S}^n,
\qquad
\sT({U^\star})[V]
:=\sL\left(U^\star V^\top+V{U^\star}^\top\right).
\]
When the base point $U^\star$ is clear from context, we suppress it and simply write $\sT$. The definitions of the clean and noisy Jacobians then give $\sJ_C=\sA_C\circ\sT$ and $\sJ_N=\sA_N\circ\sT$, and hence, $\sJ_C^*=\sT^*\circ\sA_C^*$ and $\sJ_N^*=\sT^*\circ\sA_N^*$.
In particular, both adjoint Jacobians take values in $\Im\sT^*$. We write
\[
\dim\Im\sT^*=\dim\Im\sT=:d_\star.
\]
\begin{assumption}[Whitening operator]\label{ass:whitening}
There exists a linear isomorphism $\sH:\Im\sT^*\to\mathbb{R}^{d_\star}$, called the \emph{whitening operator}, such that the whitened adjoint Jacobians $\widetilde{\sJ}_C^* = \sH \circ \sJ_C^*$ and $\widetilde{\sJ}_N^* = \sH \circ \sJ_N^*$ satisfy
\[
\widetilde{\sJ}_C^*[e_i] \sim N(0,I_{d_\star}) \ \text{ i.i.d.\ for } i\in\Omega_C,
\qquad
\widetilde{\sJ}_N^*[e_j] \sim N(0,I_{d_\star}) \ \text{ i.i.d.\ for } j\in\Omega_N,
\]
where $e_i$ denotes the $i$-th standard basis vector of $\mathbb{R}^{m_C}$ or $\mathbb{R}^{m_N}$, as appropriate.
\end{assumption}
Our next lemma shows that this whitening operator exists for Gaussian measurement operators $\sA$, which underlie symmetric and asymmetric matrix sensing as well as nonnegative sparse regression.
\begin{lemma}[Existence of a whitening operator]
\label{lem:whitening-operator}
Fix a ground-truth factor $U^\star$ and let
$\sT$ be its associated linearized design map. Suppose that
$A_1,\ldots,A_m$ are independent of the outlier vector $\varepsilon$.
Suppose further that there exist a linear isometry
\[
\iota_\star:\Im\sT\longrightarrow\mathbb{R}^{d_\star},
\qquad
d_\star:=\dim\Im\sT,
\]
and a positive-definite matrix $\Sigma\succ 0$
such that
\[
\iota_\star\!\left(P_{\Im\sT}A_1\right),\ldots,
\iota_\star\!\left(P_{\Im\sT}A_m\right)
\stackrel{\mathrm{i.i.d.}}{\sim}N(0,\Sigma).
\]
Then Assumption~\ref{ass:whitening} holds. In particular, a valid
whitening operator is
\[
\sH
:=
\Sigma^{-1/2}
\circ\iota_\star
\circ
\left(\left.\sT^*\right|_{\Im\sT}\right)^{-1}
:
\Im\sT^*\longrightarrow\mathbb{R}^{d_\star}.
\]
Moreover, the hypothesis above holds for the Gaussian measurement
ensembles underlying nonnegative sparse regression, symmetric matrix
sensing, and asymmetric matrix sensing.
\end{lemma}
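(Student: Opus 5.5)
The argument splits into a deterministic part, showing that $\sH$ whitens the adjoint Jacobians, and a model-by-model check of the Gaussian hypothesis.

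\emph{Deterministic part.} First, $\sT^*$ restricted to $\Im\sT$ is a linear isomorphism onto $\Im\sT^*$. It is injective because $\ker\sT^*=(\Im\sT)^\perp$, and the two spaces have the same dimension $d_\star$. Hence $\sH$ is well defined and is an isomorphism $\Im\sT^*\to\RR^{d_\star}$.

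Next compute the image of a basis vector. For $i\in\Omega_C$,
\[
\sJ_C^*[e_i]=\sT^*\sA_C^*[e_i]=\sT^*(A_i)=\sT^*(P_{\Im\sT}A_i),
\]
where the last equality uses that $\sT^*$ kills $(\Im\sT)^\perp$. Applying $\sH$ therefore gives
\[
\widetilde{\sJ}_C^*[e_i]=\Sigma^{-1/2}\iota_\star(P_{\Im\sT}A_i)\sim N(0,I_{d_\star}),
\]
and the same computation covers the noisy indices $j\in\Omega_N$.

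For independence, the vectors $\{\iota_\star(P_{\Im\sT}A_i)\}_{i\in[m]}$ are i.i.d. by hypothesis. Since $\varepsilon$ is independent of $(A_i)$, the index sets $\Omega_C$ and $\Omega_N$ are independent of the measurements. Conditioning on $\varepsilon$ therefore preserves the i.i.d. Gaussian law, and relabeling the indices into clean and noisy blocks preserves it as well. The resulting statement is conditional on $\varepsilon$, which is how Assumption~\ref{ass:whitening} is to be read.

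\emph{Model-specific part.} The substantive step is checking the Gaussian hypothesis for each model. In each case $\sL$ and $\sA$ are built from Gaussian $\Gamma_i$ or $a_i$, so $P_{\Im\sT}A_i$ is a centered Gaussian vector in $\Im\sT$, and these vectors are i.i.d. across $i$. It remains to show that the covariance is nondegenerate on $\Im\sT$, i.e.\ that $\Sigma\succ0$. The plan is to show that $A_i$ (after applying $\sL$ where appropriate) has a law whose support spans the relevant space, so that any subspace of it carries a full-rank Gaussian projection.

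\textbf{Nonnegative sparse regression.} Here $A_i=\diag(a_i)$ with $a_i\sim N(0,I_n)$. The space $\Im\sT$ consists of diagonal matrices supported on $\supp u^\star$, and projecting onto it gives coordinates $(a_i)_{\supp}\sim N(0,I_s)$.

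\textbf{Symmetric matrix sensing.} Here $A_i=\tfrac12(\Gamma_i+\Gamma_i^\top)$. Under the Frobenius isometry with $\SS^n$, this is a GOE-type ensemble with covariance $\tfrac12$ off the diagonal and $1$ on it in an orthonormal basis, which is positive definite on all of $\SS^n$. Its restriction to any subspace is therefore nondegenerate.

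\textbf{Asymmetric matrix sensing.} Here $\sL(A_i)$ is the block-antidiagonal embedding of $\tfrac12\Gamma_i$, whose law is a scaled standard Gaussian on the space of off-diagonal symmetric block matrices. That space contains $\Im\sT$, because $\sL$ maps into it. So $\Sigma$ is a positive multiple of the identity.

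The main obstacle is this asymmetric case. There one must confirm that $\sT^*\sA^*$ only sees $\sL(A_i)$, which holds because $\sL$ is self-adjoint. The map $\sT$ should then be reinterpreted as $V\mapsto\sL(U^\star V^\top+VU^{\star\top})$, so that the relevant projection is that of $\sL(A_i)$, not of $A_i$. In that case the hypothesis is checked with $A_i$ replaced by $\sL(A_i)$, which is legitimate since $\langle A_i,\sL(\cdot)\rangle=\langle\sL(A_i),\cdot\rangle$.
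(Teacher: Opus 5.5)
Your proposal is correct and follows essentially the same route as the paper: $\sT^*|_{\Im\sT}$ is an isomorphism, $\sH(\sT^*[A_i])=\Sigma^{-1/2}\iota_\star(P_{\Im\sT}A_i)$, you condition on $\varepsilon$, and you check the Gaussian hypothesis model by model. The only difference is that the paper exhibits isotropy ($\Sigma=cI$) on a subspace $\sG\supseteq\Im\sT$, whereas you argue only that $\Sigma\succ0$, which the lemma's hypothesis allows.

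Two small remarks. For the GOE ensemble, the covariance in the orthonormal Frobenius basis is in fact the identity; the variance $\tfrac12$ refers to the matrix entries, not to orthonormal coordinates. And your asymmetric ``obstacle'' is vacuous: $\sT$ is already defined with $\sL$ built in, and $\sL$ is an orthogonal projection with $\Im\sT\subseteq\Im\sL$. Hence $P_{\Im\sT}A_i=P_{\Im\sT}\sL(A_i)$ automatically, and here $A_i$ already lies in $\Im\sL$.
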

Its proof is presented in Appendix~\ref{app::whitening-operator}.
To see the utility of the whitening procedure, note that since $\sH$ is an isomorphism from $\Im\sT^*$ onto $\mathbb{R}^{d_\star}$ and both $\Im\sJ_C^*$ and $\Im\sJ_N^*$ belong to $\Im\sT^*$, applying $\sH$ preserves the solution set defining $\Lambda_C$:
\begin{align*}
\Lambda_C
&= \left\{v\in[-1,1]^{m_C} : \sJ_C^*[v] = \sJ_N^*[\sign(\varepsilon_N)]\right\} \\
&= \left\{v\in[-1,1]^{m_C} : \sH\circ\sJ_C^*[v] = \sH\circ\sJ_N^*[\sign(\varepsilon_N)]\right\} \\
&= \left\{v\in[-1,1]^{m_C} : \widetilde{\sJ}_C^*[v] = \widetilde{\sJ}_N^*[\sign(\varepsilon_N)]\right\}.
\end{align*}
The latter characterization makes it considerably easier to establish the nonempty relative interior and nondegeneracy of $\Lambda_C$, owing to the isotropic Gaussian structure of $\widetilde{\sJ}_C^*$ and $\widetilde{\sJ}_N^*$, as we show next.

\begin{proposition}[Interior and nondegeneracy under whitening]\label{prop:interior-point-event}
Suppose Assumption~\ref{ass:whitening} holds, $m\geq c_1 d_\star$ for some constant $c_1>2$, and $m_C>m/2$. Then, with probability at least $1-\exp(-c_2 d_\star)$ for some constant $c_2>0$, Assumption~\ref{ass:affine-cube} holds:
\begin{enumerate}
	\item[(I)] $\Lambda_C$ meets the interior of the cube, i.e., $\Lambda_C\cap(-1,1)^{m_C}\neq\emptyset$;
	\item[(N)] for every vertex $\nu$ of $\Lambda_C$, the restriction $\sJ_{C,\sI_<(\nu)}$ of $\sJ_C$ to the free coordinates of $\nu$ has full rank, i.e., $\rank\sJ_{C,\sI_<(\nu)} = \rank\sJ_C$.
\end{enumerate}
\end{proposition}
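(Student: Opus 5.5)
Throughout, I work in whitened coordinates. Write $G_C\in\RR^{d_\star\times m_C}$ for the matrix with columns $g_i:=\widetilde\sJ_C^*[e_i]$ and $G_N\in\RR^{d_\star\times m_N}$ for the matrix with columns $\widetilde\sJ_N^*[e_j]$. By Assumption~\ref{ass:whitening}, all $m$ columns are i.i.d.\ $N(0,I_{d_\star})$. Since $\varepsilon$ is independent of the measurements, the sign vector $s:=\sign(\varepsilon_N)$ is fixed after conditioning on $\varepsilon$. The target vector $b:=G_Ns$ is then Gaussian with law $N(0,m_N I_{d_\star})$ and is independent of $G_C$. The polytope becomes
\[
\Lambda_C=\{v\in[-1,1]^{m_C}:G_Cv=b\}.
\]
Because $\sH$ is an isomorphism, $\rank\sJ_C=\rank G_C$, and the free-coordinate restriction in (N) has the same rank as the column submatrix $G_{C,\sI_<(\nu)}$.

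\emph{Step 1: (N).} I would first show that almost surely $G_C$ has rank $d_\star$, since $m_C>m/2\ge d_\star$. I would also show that $G_C$ and $b$ are in general position: every $d_\star$ columns are linearly independent, and $b$ does not lie in the span of any $d_\star-1$ columns. Both properties hold with probability one because the distributions are absolutely continuous.

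Now fix a vertex $\nu$ of $\Lambda_C$. The argument in the proof of Proposition~\ref{prop:fan-structure} gives $|\sI_<(\nu)|\le d_\star$. Suppose $|\sI_<(\nu)|<d_\star$. Then
\[
b-\sum_{\text{sat}}\nu_i g_i\in\Span\{g_i: i\in\sI_<(\nu)\},
\]
a span of at most $d_\star-1$ columns, where the saturated coefficients $\nu_i$ take values in $\{\pm1\}$. For each of the finitely many choices of free set and sign pattern, this is a probability-zero event by continuity. A union bound over these choices gives $|\sI_<(\nu)|=d_\star$ almost surely. General position then makes $G_{C,\sI_<(\nu)}$ invertible, which is exactly (N), holding with probability one.

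\emph{Step 2: (I).} This is the main obstacle. I need some $v$ with $\|v\|_\infty<1$ and $G_Cv=b$. My first candidate is the minimum-norm solution
\[
v_0=G_C^\top(G_CG_C^\top)^{-1}b.
\]
The difficulty is that $\|b\|\approx\sqrt{m_N d_\star}$ and the singular values of $G_C$ are of order $\sqrt{m_C}$, so the entries of $v_0$ are of order $\sqrt{m_N/m_C}$. This is $O(1)$, and $\ell_\infty$ control over $m_C$ coordinates would cost a logarithmic factor. I therefore plan a duality argument instead.

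By Farkas/LP duality, strict feasibility of $\{G_Cv=b,\ \|v\|_\infty<1\}$ is equivalent to
\[
\langle b,y\rangle<\|G_C^\top y\|_1\quad\text{for all }y\neq0.
\]
Substituting $b=G_Ns$, the left side satisfies $\langle b,y\rangle\le\|G_N^\top y\|_1$. It therefore suffices to show that, uniformly over unit vectors $y$,
\[
\|G_N^\top y\|_1<\|G_C^\top y\|_1.
\]
For fixed $y$, each side is a sum of i.i.d.\ $|N(0,1)|$ variables, with expectations $m_N\sqrt{2/\pi}$ and $m_C\sqrt{2/\pi}$. Since $m_C-m_N\ge(1-2p)m\gtrsim m$, the gap between them is linear in $m$. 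The standard $\ell_1$ concentration bound (via Lipschitz concentration plus a $1/4$-net of the sphere in $\RR^{d_\star}$) gives uniform deviations of order $\delta m$ with probability at least
\[
1-\exp(-c\delta^2m+Cd_\star).
\]
Taking $\delta$ small relative to $1-2p$ and using $m\ge c_1d_\star$ with $c_1$ large, this probability is at least $1-e^{-c_2d_\star}$.

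One edge case remains. If some $y\ne0$ had $G_C^\top y=0$, the inequality would fail. This cannot happen on the event, because $G_C$ has full row rank there.

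\emph{Combining.} Intersecting the almost-sure event of Step~1 with the high-probability event of Step~2 gives both (I) and (N) with probability at least $1-\exp(-c_2d_\star)$. The constants depend on the ratio $m_C/m$. In the intended application, with $\|\varepsilon\|_0\le pm$ and $p<1/2$, this ratio is bounded away from $1/2$, and I would make that dependence explicit.
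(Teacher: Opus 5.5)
Your argument for (N) is essentially the paper's. It uses almost-sure full rank of $G_C$, a finite union over free sets of size $<d_\star$ and sign patterns on the saturated coordinates (each a null event by continuity), and the vertex/segment argument for $|\sI_<(\nu)|\le d_\star$. One small correction: draw the continuity from a saturated column $g_j$, conditioning on all other columns and on $b$, rather than from $b$. When $m_N=0$ you have $b=0$, so your ``$b$ avoids all $(d_\star-1)$-column spans'' claim fails, although you never actually need it. Your dual reformulation of (I), $\langle b,y\rangle<\|G_C^\top y\|_1$ for all $y\neq0$, is also fine; it is the same support-function/separation step the paper uses.

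The gap is the next move, $\langle b,y\rangle=\langle s,G_N^\top y\rangle\le\|G_N^\top y\|_1$, after which you need $m_C-m_N\gtrsim m$. The proposition assumes only $m_C>m/2$, which permits $m_C-m_N\in\{1,2\}$. In that regime your sufficient condition fails with probability at least about $1/2$. For a fixed unit $y$, $\|G_C^\top y\|_1-\|G_N^\top y\|_1$ has mean $\sqrt{2/\pi}\,(m_C-m_N)=O(1)$ and standard deviation $\sqrt{(1-2/\pi)m}$, so no sharper concentration or finer net can rescue it. As written, you prove (I) only under an extra margin $m_C-m_N\ge\kappa m$, with $c_1,c_2$ depending on $\kappa$. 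You flag this yourself, but it is not the stated hypothesis.

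The missing idea is to keep the information that $s$ is fixed independently of the measurements. Then $b\sim N(0,m_N I_{d_\star})$ is small in Euclidean norm, as you noted yourself ($\|b\|\approx\sqrt{m_N d_\star}$). So bound $\langle b,y\rangle\le\|b\|_2\|y\|_2$ instead, where $\|b\|_2<2\sqrt{m_N d_\star}$ with probability at least $1-e^{-d_\star/2}$. Next, lower-bound $\tau:=\inf_{\|y\|_2=1}\|G_C^\top y\|_1$:
\begin{itemize}
\item Gordon's inequality (or a matrix-deviation bound) gives $\mathbb{E}\tau\ge\sqrt{2/\pi}\,m_C-C\sqrt{m_C d_\star}$.
\item Since $G_C\mapsto\tau$ is $\sqrt{m_C}$-Lipschitz in Frobenius norm, $\tau>\tfrac14\sqrt{2/\pi}\,m_C$ with probability at least $1-e^{-c m_C}$ once $m_C\ge C d_\star$.
\end{itemize}
Then $m_N\le m_C$ and $m_C\ge C' d_\star$ give $\|b\|_2<\tau$, hence (I), with universal constants and no margin above $m/2$. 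This is exactly the paper's proof.

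Your $\ell_1$-comparison route does have value. It shows $G_N[-1,1]^{m_N}\subset\operatorname{int}G_C[-1,1]^{m_C}$ simultaneously for all sign patterns, so it would survive outliers chosen after seeing the measurements. It also suffices for the downstream theorems, where $m_N\le pm$ with fixed $p<1/2$. But it does not prove the proposition under its stated hypotheses.
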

The proof is given in Appendix~\ref{app::nondeg-hp}. With Proposition~\ref{prop:interior-point-event} in hand, the proofs of Theorems~\ref{thm:nonnegative-sparse}, \ref{thm:sym-MS}, and~\ref{thm:asym-MS} separate into common and model-specific steps. For a fixed ground-truth factor $U^\star$, criticality follows from \citet[Corollaries~1 and~2]{ma2025can} and the corresponding diagonal specialization for nonnegative sparse regression. Lemma~\ref{lem:whitening-operator} and Proposition~\ref{prop:interior-point-event} then verify Assumption~\ref{ass:affine-cube} with high probability.

For nonnegative sparse regression and symmetric matrix sensing, Proposition~\ref{lem:isotropic-branch-occupancy} reduces the remaining work to constructing distinct nonzero directions $V_1,\ldots,V_q\in\ker\sJ_C$ such that, conditional on $\sJ_C$ and $\Lambda_C$, the projected residuals $P_{\sN_C}\sQ_C(V_1),\ldots,P_{\sN_C}\sQ_C(V_q)$ are independent and spherically symmetric on $\sN_C$. The asymmetric model uses the same common steps, but its final cone-selection argument relies on a pair of complementary directions. Table~\ref{tab:robust-model-dictionary} summarizes these model-specific constructions. Each displayed direction lies in $\ker\sJ_C$ because it acts either on an off-support coordinate or along a null direction of the ground-truth factor. Once inactivity is established for a fixed factor, Lemma~\ref{lem:orthogonal-invariance} extends it to all (balanced) ground-truth factors. The full model-specific proofs are given in Appendices~\ref{app::cone-select-sp-hp}, \ref{app::cone-select-sym-hp}, and~\ref{app::cone-select-asym-hp}.

\begin{center}
\begin{minipage}{\linewidth}
	\centering
	\small
	\begin{tabular}{@{}>{\raggedright\arraybackslash}p{.21\linewidth}>{\raggedright\arraybackslash}p{.17\linewidth}>{\raggedright\arraybackslash}p{.54\linewidth}@{}}
		\toprule
		Model & $d_\star$ & Kernel directions used for cone selection \\
		\midrule
		Nonnegative sparse regression & $s$ & Let $S=\supp(u^\star)$ and enumerate $S^c=\{i_a\}_{a=1}^{n-s}$. Set $V_a=e_{i_a}$. \\
		Symmetric matrix sensing & $nr-\frac{r(r-1)}2$ & Choose a unit vector $w\in\ker U^\star$ and an orthonormal basis $\{u_j\}_{j=1}^{n-r}$ of $\col(U^\star)^\perp$. Set $V_j=u_jw^\top$. \\
		Asymmetric matrix sensing & $r(n_1+n_2-r)$ & Choose unit vectors $w\in\ker U_1^\star=\ker U_2^\star$, $u\in\col(U_1^\star)^\perp$, and $v\in\col(U_2^\star)^\perp$. Set $V_1=(uw^\top,0)$ and $V_2=(0,vw^\top)$. \\
		\bottomrule
	\end{tabular}
	\captionof{table}{Model-specific dimensions and directions used in the cone-selection argument. In each row, the displayed directions belong to $\ker\sJ_C$.}
	\label{tab:robust-model-dictionary}
\end{minipage}
\end{center}

\section{Conclusion}\label{sec:conclusion}

We studied when critical points of finite-max composite problems fail to admit a $C^2$ identifiable manifold. After characterizing the locally minimal identifiable set, we derived a branching criterion based on a second-order incompatibility: if two strata accessible along a common family of tangent directions induce different branch quadratics, then no identifiable manifold can exist. This criterion detects inactivity in settings where first-order conditions are inconclusive.

Our applications show that inactivity is not merely pathological. It arises almost surely at common interpolators of random finite-scenario minimax problems and with high probability in overparameterized robust low-rank recovery, encompassing both global minimizers and strict saddles. These results suggest that finite identification and the resulting smooth local dynamics cannot be taken for granted in modern nonsmooth problems. Developing algorithmic guarantees that do not rely on identifiable manifolds, and identifying weaker geometric structures that remain available at inactive points, are natural directions for future work.

\section*{Acknowledgments}
This research is supported, in part, by the NSF CAREER grant CCF-2337776 and ONR grant N00014-26-1-2074.

\bibliography{ref}

\appendix
\section{Derivation of the Identifiable Set in Example~\ref{ex:l1-outer}}\label{app::example-l1}
Our goal is to show that the minimal identifiable set
\begin{equation}\nonumber
\sM_{\bar v} := \bigcup_{\lambda\in\Lambda_{\bar v}}\{z\in\RR^m : \supp\lambda\subseteq I(z)\},
\qquad
\Lambda_{\bar v} := \Bigl\{\lambda\in\Delta_{\sI} : \bar v = \textstyle\sum_\sigma\lambda_\sigma\sigma\Bigr\}
\end{equation}
coincides with
\begin{equation}\nonumber
\mathcal{S}_{\bar v} := \{z\in\RR^m : z_i=0\ \text{if}\ |\bar v_i|<1,\ z_i\geq 0\ \text{if}\ \bar v_i=1,\ z_i\leq 0\ \text{if}\ \bar v_i=-1\}.
\end{equation}

Fix a coordinate $i$. Since $\lambda\in\Delta_{\sI}$ is a probability distribution over $\sI=\{\pm1\}^m$, the constraint $\bar v_i = \sum_\sigma\lambda_\sigma\sigma_i$ expresses $\bar v_i$ as a weighted average of $\sigma_i\in\{-1,+1\}$. Setting $\alpha_i^+ := \sum_{\sigma\in\sI_i^+}\lambda_\sigma$, $\alpha_i^- := \sum_{\sigma\in\sI_i^-}\lambda_\sigma$, and $\sI_i^\pm := \{\sigma\in\sI:\sigma_i=\pm1\}$, 
we have $\alpha_i^++\alpha_i^-=1$ and $\bar v_i = \alpha_i^+-\alpha_i^-$, so
\begin{equation}\label{eq:supp_and_sign}
|\bar v_i| = 1 \iff \supp\lambda\subseteq\sI_i^{\sign\bar v_i}, \quad |\bar v_i| < 1 \iff \supp\lambda\cap\sI_i^+\neq\varnothing\ \text{and}\ \supp\lambda\cap\sI_i^-\neq\varnothing.
\end{equation}

\textbf{Proof of $\sM_{\bar v}\subseteq\mathcal{S}_{\bar v}$.} Suppose $z\in\sM_{\bar v}$ and pick $\lambda\in\Lambda_{\bar v}$ with $\supp\lambda\subseteq I(z)$. For coordinate $i$, if $z_i>0$ then every active sign pattern has $\sigma_i=+1$, so $\supp\lambda\subseteq\sI_i^+$, which by~\eqref{eq:supp_and_sign} forces $\bar v_i=1$. Likewise, $z_i<0$ forces $\bar v_i=-1$. Finally, if $|\bar v_i|<1$ then by~\eqref{eq:supp_and_sign} the support of $\lambda$ must intersect both $\sI_i^+$ and $\sI_i^-$, which requires both signs to be active at $z_i$, hence $z_i=0$. Thus $z\in\mathcal{S}_{\bar v}$.

\textbf{Proof of $\mathcal{S}_{\bar v}\subseteq\sM_{\bar v}$.} Suppose $z\in\mathcal{S}_{\bar v}$. Since $\bar v\in[-1,1]^m=\conv\{\pm1\}^m$, there exists $\lambda\in\Lambda_{\bar v}$; we claim $\supp\lambda\subseteq I(z)$ for any such $\lambda$. Indeed, fix a sign pattern $\sigma\in\supp\lambda$ and a coordinate $i$. If $z_i\neq0$, then $\bar v_i=\sign(z_i)$ by definition of $\mathcal{S}_{\bar v}$, so~\eqref{eq:supp_and_sign} gives $\sigma_i=\sign(z_i)$, meaning $\sigma$ is active at $z_i$. If $z_i=0$, both signs are active at $z_i$ regardless of $\sigma_i$. Hence $\sigma\in I(z)$, so $\supp\lambda\subseteq I(z)$ and $z\in\sM_{\bar v}$.

\section{Omitted Proofs}
\subsection{Proof of Lemma~\ref{lem:orthogonal-invariance}}\label{app:orthogonal-invariance}
Define the lifted transformation
\[
\widetilde{\mathcal{T}}:\sX\times\RR\to\sX\times\RR,
\qquad
\widetilde{\mathcal{T}}(x,\alpha)
:=(\mathcal{T}x,\alpha).
\]
Since $\mathcal{T}$ is orthogonal and $h\circ\mathcal{T}=h$, the map $\widetilde{\mathcal{T}}$ is orthogonal and leaves the epigraph of
$h$ invariant: $\widetilde{\mathcal{T}}(\epi h)=\epi h$.  The limiting normal cone is equivariant under orthogonal transformations. Hence
\[
N_{\epi h}\bigl(\mathcal{T}x,h(\mathcal{T}x)\bigr)
=
\widetilde{\mathcal{T}}\,
N_{\epi h}\bigl(x,h(x)\bigr).
\]
Since $\widetilde{\mathcal{T}}(v,-1)=(\mathcal{T}v,-1)$, the definition of the limiting subdifferential gives
\[
\begin{aligned}
v\in\partial h(x)
&\iff
(v,-1)\in N_{\epi h}\bigl(x,h(x)\bigr)\\
&\iff
(\mathcal{T}v,-1)
\in
N_{\epi h}\bigl(\mathcal{T}x,h(\mathcal{T}x)\bigr)\\
&\iff
\mathcal{T}v\in\partial h(\mathcal{T}x).
\end{aligned}
\]
Therefore, $\partial h(\mathcal{T}x) =\mathcal{T}\bigl(\partial h(x)\bigr)$. In particular, since $\mathcal{T}0=0$,
\[
0\in\partial h(x)
\iff
0\in\partial h(\mathcal{T}x),
\]
which proves the invariance of criticality.

We next prove the invariance of identifiable sets. Suppose that
$\sM$ is identifiable for $h$ at $x$ for $v$, and consider sequences $y_\nu\to\mathcal{T}x$ and $w_\nu\to\mathcal{T}v$ with $w_\nu\in\partial h(y_\nu).$
Set $x_\nu:=\mathcal{T}^{-1}y_\nu$ and $v_\nu:=\mathcal{T}^{-1}w_\nu$. By continuity of $\mathcal{T}^{-1}$, we have $x_\nu\to x$ and
$v_\nu\to v$. The subdifferential equivariance established above also gives $v_\nu\in\partial h(x_\nu).$
Identifiability of $\sM$ therefore implies $x_\nu\in\sM$ for all
sufficiently large $\nu$, and hence $y_\nu=\mathcal{T}x_\nu\in\mathcal{T}\sM$
for all sufficiently large $\nu$. Thus, $\mathcal{T}\sM$ is
identifiable for $h$ at $\mathcal{T}x$ for $\mathcal{T}v$. Applying
the same argument to $\mathcal{T}^{-1}$ proves the converse.

It remains to verify that the manifold and restricted-smoothness properties are also preserved. Suppose that $\sM$ is a $C^2$ manifold around $x$. Locally, it can be written as $\sM=\Phi^{-1}(0)$ for some $C^2$ map $\Phi$ whose derivative is surjective. Around $\mathcal{T}x$, we then have
\[
\mathcal{T}\sM
=
\bigl(\Phi\circ\mathcal{T}^{-1}\bigr)^{-1}(0).
\]
Because $\mathcal{T}^{-1}$ is a linear isomorphism, the derivative of $\Phi\circ\mathcal{T}^{-1}$ remains surjective. Hence $\mathcal{T}\sM$ is a $C^2$ embedded manifold. Moreover, $\mathcal{T}|_{\sM}:\sM\to\mathcal{T}\sM$ is a $C^\infty$ diffeomorphism, and the invariance of $h$ yields
\[
h|_{\mathcal{T}\sM}
=
\bigl(h|_{\sM}\bigr)
\circ
\bigl(\mathcal{T}|_{\sM}\bigr)^{-1}.
\]
Thus, $h|_{\sM}$ is $C^2$ if and only if $h|_{\mathcal{T}\sM}$ is $C^2$. It follows that $\sM$ is a $C^2$ identifiable manifold at $x$ for $v$ if and only if $\mathcal{T}\sM$ is a $C^2$ identifiable manifold at $\mathcal{T}x$ for $\mathcal{T}v$. The invariance of inactivity follows immediately.
\qed

\subsection{Proof of Proposition~\ref{prop:clean-reduction}}\label{app:clean-reduction}
Let $\widehat\Lambda
:=\{v\in\partial \|F(U^\star)\|_1:DF(U^\star)^*[v]=0\}$
be the full multiplier set at $U^\star$ and let $\widehat{\mathcal{S}}_v\subseteq \mathbb{R}^m$ be the sign cone \eqref{eq:Sy-section5}. By Proposition~\ref{prop:lmi-finite-max} applied to $g=\|\cdot\|_1$, the set
\[
\widehat\sM
:=\bigcup_{v\in\widehat\Lambda}F^{-1}(\widehat{\mathcal{S}}_v)
\]
is locally minimal identifiable for $f_{\mathrm{LR}}$ at $U^\star$ for $0\in \partial f_{\mathrm{LR}}(U^\star)$.
For corrupted indices $i\in\Omega_{N}$, $F_i(U^\star)=-\eps_i\ne0$. Hence, after shrinking to a neighborhood $\sO$ of $U^\star$, the corrupted signs remain fixed:
\[
\sign (F_N(U))=-\sign(\eps_N)
\qquad\text{for all }U\in\sO.
\]
Therefore the corrupted block of every multiplier $v\in\widehat\Lambda$ is fixed at $\sign (F_N(U))=-\sign(\eps_N)$. Writing $v=(\bar v,-\sign(\eps_N))$ with $\bar v\in\RR^{m_C}$, the criticality equation becomes
\[
\sJ_{C}^*[\bar v]-\sJ_N^*[\sign(\eps_N)]=0,
\]
so the clean block $\bar v$ lies in $\Lambda_C$ as defined in \eqref{eq:clean-multiplier-section5}. Since the corrupted signs are already pinned to $-\sign(\eps_N)$ on $\sO$, membership $F(U)\in\widehat{\mathcal{S}}_v$ depends only on the clean indices; as a result, $F(U)\in\widehat{\mathcal{S}}_v$ if and only if $F_C(U)\in\mathcal{S}_{\bar v}$. Consequently,
\[
\widehat\sM\cap\sO
=F_C^{-1}\left(\bigcup_{v\in\Lambda_C}\mathcal{S}_v\right)\cap\sO
=F_C^{-1}(\sE_C)\cap\sO,
\]
which is the claimed locally minimal identifiable set.\qed

\subsection{Proof of Lemma~\ref{lem:tangent-clean-null-equality}}\label{app::lemma-tangent}

\begin{proof}
We prove each inclusion separately.

\emph{Establishing $T_{U^\star}\sM\subseteq\ker\sJ_C$.}
Let $V\in T_{U^\star}\sM$ and, for an open interval $I$ containing $0$, let $\gamma_V:I\to\sM$ be a $C^1$ curve with $\gamma_V(0)=U^\star$ and $\dot\gamma_V(0)=V$. Since $\sM=F_C^{-1}(\sE_C)$, we have $F_C(\gamma_V(t))\in\sE_C$ for all $t\in I$. Since $\sE_C$ is a closed cone, dividing by $t>0$ preserves membership, so $\frac{1}{t}F_C(\gamma_V(t))\in\sE_C$. Letting $t\downarrow 0$ and using $F_C(U^\star)=0$ gives
\[
\sJ_C[V] = \lim_{t\downarrow 0}\frac{F_C(\gamma_V(t))-F_C(\gamma_V(0))}{t} =\lim_{t\downarrow 0}\frac{F_C(\gamma_V(t))-F_C(U^\star)}{t}  = \lim_{t\downarrow 0}\frac{F_C(\gamma_V(t))}{t} \in \sE_C.
\]
On the other hand, $\sJ_C[V]\in\Im\sJ_C=\sR_C$ by definition. By Proposition~\ref{prop:unique-slice} applied with $a=0$, the intersection $\sE_C\cap\sR_C=\{0\}$. Hence $\sJ_C[V]=0$, so $V\in\ker\sJ_C$.

\emph{Establishing $\ker\sJ_C\subseteq T_{U^\star}\sM$.}
Let $V\in\ker\sJ_C$. We must find a $C^1$ curve $\gamma_V:I\to\sM$ with $\gamma_V(0)=U^\star$ and $\dot\gamma_V(0)=V$, which by definition of $\sM$ requires $F_C(\gamma_V(t))\in\sE_C$ for all $t\in I$.
Consider the straight-line curve $t\mapsto U^\star+tV$. Since $F_C(U^\star)=0$ and $\sJ_C[V]=0$,  we have
\[
a_V(t) := F_C(U^\star+tV) = t^2\sQ_C(V),
\]
where $\sQ_C(V)$ is defined as \eqref{eq:clean-quadratic}.
This need not lie in $\sE_C$, so we correct it. By Proposition~\ref{prop:unique-slice}, for any $a\in\RR^{m_C}$ there is a unique point $\sS_{\Lambda_C}(a)\in\sE_C$ with $\sS_{\Lambda_C}(a)-a\in\sR_C$. Define the residual correction
\[
\Gamma(a) := \sS_{\Lambda_C}(a) - a \in \sR_C.
\]
We seek a correction $\Delta(t)=O(t^2)$ such that $\gamma_V(t):=U^\star+tV+\Delta(t)$ satisfies $F_C(\gamma_V(t))\in\sE_C$. Specifically, we write
\[
F_C(\gamma_V(t)) = a_V(t) + \sJ_C[\Delta(t)] + \rho_{V,\Delta}(t),
\]
where $\rho_{V,\Delta}(t):=F_C(U^\star+tV+\Delta(t))-a_V(t)-\sJ_C[\Delta(t)]$ is the remainder. Moreover, due to the $C^2$-smoothness of $F$, we obtain $\|\rho_{V,\Delta}(t)\| = O(|t|\|\Delta(t)\|+\|\Delta(t)\|^2)$

For $F_C(\gamma_V(t))$ to lie in $\sE_C$, we need
\[
\sJ_C[\Delta(t)] = \Gamma\bigl(a_V(t)+\rho_{V,\Delta}(t)\bigr).
\]
Since $\Gamma$ takes values in $\sR_C=\Im\sJ_C$, and $\sJ_C$ restricted to $\sL:=(\ker\sJ_C)^\perp$ is a bijection onto $\sR_C$, every right-hand side can be realized by a unique $\Delta(t)\in\sL$. This gives the fixed-point equation
\[
\Delta(t) = \underbrace{(\sJ_C|_\sL)^{-1}\Bigl(\Gamma\bigl(a_V(t)+\rho_{V,\Delta}(t)\bigr)\Bigr)}_{=:\sT_t(\Delta(t))}.
\]

We show $\sT_t$ has a fixed point in the ball $\mathbb{B}_t:=\{\Delta\in\sL:\|\Delta\|\leq Mt^2\}$ for sufficiently large $M>0$. By Proposition~\ref{prop:unique-slice} and Lemma~\ref{lem:branch-correspondence}, $\sS_{\Lambda_C}$ is piecewise linear, hence Lipschitz with $\Gamma(0)=0$. For $\Delta(t)\in\mathbb{B}_t$, the remainder satisfies $\rho_{V,\Delta}(t)=O(t^3)$, so $a_V(t)+\rho_{V,\Delta}(t)=O(t^2)$, and Lipschitzness of $\Gamma$ gives $\Gamma(a_V(t)+\rho_{V,\Delta}(t))=O(t^2)$. Choosing $M$ large enough and taking $|t|$ small enough, we have $\sT_t(\mathbb{B}_t)\subseteq \mathbb{B}_t$, that is, $\sT_t$ maps $\mathbb{B}_t$ to itself. To see that $\sT_t$ is a contraction on $\mathbb{B}_t$, for $\Delta_1(t),\Delta_2(t)\in\mathbb B_t$, the definition of
$\mathcal T_t$, the Lipschitzness of $\Gamma$, and $C^2$-smoothness of $F_C$ give
\[
\begin{aligned}
	\|\mathcal T_t(\Delta_1(t))-\mathcal T_t(\Delta_2(t))\|
	& = 
	O\Big(
	\|\rho_{V,\Delta_1}(t)-\rho_{V,\Delta_2}(t)\|\Big)  \\
	& = O\Big(\big(|t|+\|\Delta_1(t)\| + \|\Delta_2(t)\|\big) \|\Delta_1(t)-\Delta_2(t)\|\Big) \\
	&=O(|t|+Mt^2)\|\Delta_1(t)-\Delta_2(t)\|.
\end{aligned}
\]
which is strictly less than $\|\Delta_1(t)-\Delta_2(t)\|$ for small $|t|$. By the Banach fixed-point theorem, there exists $\Delta(t)\in\mathbb{B}_t$ with $\Delta(t)=\sT_t(\Delta(t))$ and $\|\Delta(t)\|\leq Mt^2=O(t^2)$, so $F_C(U^\star+tV+\Delta(t))\in\sE_C$ by construction.

We have constructed points $U^\star+tV+\Delta(t)\in\sM$ with $\Delta(t)=O(t^2)$, but we have not shown $\Delta(t)$ is smooth. Instead, we use the local defining map of $\sM$. Let $\Phi$ be a local $C^2$ defining map for $\sM$ near $U^\star$, so $\sM=\Phi^{-1}(0)$ locally. Since $U^\star+tV+\Delta(t)\in\sM$, we have $\Phi(U^\star+tV+\Delta(t))=0$. Taylor expanding and using $\|\Delta(t)\|=O(t^2)$ gives
\[
0 = t\,D\Phi(U^\star)[V] + D\Phi(U^\star)[\Delta(t)] + O(t^2) = t\,D\Phi(U^\star)[V] + O(t^2).
\]
Dividing by $t$ and letting $t\downarrow 0$ yields $D\Phi(U^\star)[V]=0$, which means $V\in T_{U^\star}\sM$. Since $V\in\ker\sJ_C$ was arbitrary, $\ker\sJ_C\subseteq T_{U^\star}\sM$.
Combining both steps, $T_{U^\star}\sM=\ker\sJ_C$.
\end{proof}

\subsection{Proof of Proposition~\ref{prop:unique-slice}}\label{app:unique-slice}
We first prove that the intersection $(a+\sR_{C})\cap \sE_C$ is non-empty. Due to Proposition~\ref{prop:fan-structure}, $\Lambda_C$ is a full-dimensional polytope in, and contained in, the affine subspace $\bar v + \sN_C$. Therefore, the union of the normal cones of $\Lambda_C$ at its vertices covers $\sN_C$, that is, $\sN_C\subseteq \bigcup_{\nu\in\operatorname{Vert}(\Lambda_C)}N_{\Lambda_C}(\nu)$ \citep[Proposition 2.3.6]{cox2024toric}. Hence $P_{\sN_{C}}a$ lies in $\sN_{C}\cap N_{\Lambda_C}(\nu)$ for at least one vertex $\nu$. By Proposition~\ref{prop:fan-structure}, $N_{\Lambda_C}(\nu)=\sR_{C}+\mathcal{S}_\nu$, so there are $r\in\sR_{C}$ and $s\in\mathcal{S}_\nu$ with $P_{\sN_{C}}a=r+s$. Since $a-P_{\sN_{C}}a\in\sR_{C}$, we have $s-a\in\sR_{C}$, and therefore $s\in(a+\sR_{C})\cap\sE_C$, establishing its nonemptiness. 

Next, we show that $(a+\sR_{C})\cap \sE_C$ is a unique point.
\begin{itemize}
\item \emph{Uniqueness within a vertex cone.} For a vertex $\nu$, we write $\sI_\nu:=\sI_<(\nu)$ for its free coordinates and $P_{\sI_\nu}$ as the projection onto those coordinates. Suppose $z,z'\in(a+\sR_{C})\cap \mathcal{S}_\nu$, then $z-z'\in\sR_{C}$. Moreover, since $\sJ_{{C},\sI_\nu} = P_{\sI_\nu}\circ \sJ_C$ has rank equal to $\dim\sR_{C}$, the restricted projection $P_{\sI_\nu}|_{\sR_C}:\sR_{C}\to\RR^{\sI_\nu}$ is injective. Since $z-z'\in\sR_{C}$ and $P_{\sI_\nu}(z-z')=0$, we have $z=z'$.
\item \emph{Uniqueness across vertex cones.} Take
\[
z_1\in(a+\sR_{C})\cap \mathcal{S}_{\nu_1},
\qquad
z_2\in(a+\sR_{C})\cap \mathcal{S}_{\nu_2}.
\]
Then $z_1-z_2\in\sR_{C}$, while $\nu_1-\nu_2\in\sN_{C}$, so
\[
0=\inp{z_1-z_2}{\nu_1-\nu_2}
=\bigl(\inp{z_1}{\nu_1}-\inp{z_1}{\nu_2}\bigr)+\bigl(\inp{z_2}{\nu_2}-\inp{z_2}{\nu_1}\bigr).
\]
Note that $z_1\in \mathcal{S}_{\nu_1}$; hence, due to the definition of $\sS_{\nu_1}$ in \eqref{eq:Sy-section5}, $\inp{z_1}{\nu_1}- \inp{z_1}{\nu_2}\geq 0$. Similarly,  $\inp{z_2}{\nu_2}-\inp{z_2}{\nu_1}\geq 0$. Combined with the above equality, $\inp{z_1}{\nu_1}=\inp{z_1}{\nu_2}$ and $\inp{z_2}{\nu_2}=\inp{z_2}{\nu_1}$. Therefore, $z_1,z_2\in \mathcal{S}_{\nu_1}\cap \mathcal{S}_{\nu_2}$, and in particular $z_1,z_2\in (a+\sR_{C})\cap \mathcal{S}_{\nu_1}$. From the uniqueness within a vertex cone, we have $z_1=z_2$. Finally $a-a'\in\sR_{C}$ gives $a+\sR_{C}=a'+\sR_{C}$, so $\sS_{\Lambda_C}$ depends only on $P_{\sN_{C}}a$.
\end{itemize}

It remains to show that $\sS_{\Lambda_C}$ is a two-sided inverse of $P_{\sN_C}|_{\sE_C}:\sE_C\to\sN_C$, and that it is positively homogeneous.
We first verify both compositions $\sS_{\Lambda_C}\circ P_{\sN_C}|_{\sE_C}$ and $P_{\sN_C}|_{\sE_C} \circ \sS_{\Lambda_C}$ are identity. Let $z\in\sE_C$. Since $z\in(z+\sR_C)\cap\sE_C$, uniqueness gives $\sS_{\Lambda_C}(z)=z$. Hence $\sS_{\Lambda_C}(P_{\sN_C}z)=\sS_{\Lambda_C}(z)=z$, since, as established earlier, $\sS_{\Lambda_C}(z)$ depends only on $P_{\sN_C}z$. Therefore, $\sS_{\Lambda_C}\circ P_{\sN_C}|_{\sE_C}$ is identity. Next, let $w\in\sN_C$. By definition, $\sS_{\Lambda_C}(w)\in(w+\sR_C)\cap\sE_C$, so $\sS_{\Lambda_C}(w)-w\in\sR_C$. Projecting onto $\sN_C$ and using $\sR_C\perp\sN_C$ gives $P_{\sN_C}\sS_{\Lambda_C}(w)=w$. Therefore, $P_{\sN_C}|_{\sE_C}\circ\sS_{\Lambda_C}$ is identity.

To establish positive homogeneity, note that both $\sE_C$ and $\sR_C$ are cones, hence invariant under scaling by $\lambda>0$. Thus $\lambda\sS_{\Lambda_C}(a)\in(\lambda a+\sR_C)\cap\sE_C$, and uniqueness gives $\sS_{\Lambda_C}(\lambda a)=\lambda\sS_{\Lambda_C}(a)$. At $\lambda=0$, we have $\sS_{\Lambda_C}(0)=0$ since $0\in\sR_C\cap\sE_C$. This completes the proof.\qed

\subsection{Proof of Proposition~\ref{lem:isotropic-branch-occupancy}}\label{app:isotropic-branch-occupancy}
Let $\mathcal O_q$ be the event that there are distinct indices $\ell_1,\ell_2\in[q]$ and distinct vertices $\nu_1,\nu_2\in\operatorname{Vert}(\Lambda_C)$ such that
\[
P_{\sN_C}\sQ_C(V_{\ell_j})
\in\relint N_{\Lambda_C}^{\sN_C}(\nu_j),
\qquad j=1,2.
\]
We first show that the following bound holds almost surely
\begin{equation}\label{eq::Oq}
\PP\bigl(\mathcal O_q\mid\sJ_C,\Lambda_C\bigr)
\ge 1-2^{-(q-1)}.
\end{equation}
By Proposition~\ref{prop:fan-structure}, $\Lambda_C$ is full-dimensional in its affine space. Hence its vertex normal cones $N_{\Lambda_C}^{\sN_C}(\nu)$, $\nu\in\operatorname{Vert}(\Lambda_C)$, are full-dimensional and pointed, cover $\sN_C$, and have pairwise disjoint relative interiors~\citep[Theorem~2.3.2 and Proposition~2.3.6]{cox2024toric}. Their boundaries therefore have spherical measure zero.
For each $\nu\in\operatorname{Vert}(\Lambda_C)$, set
\[
\alpha_\nu
:=\PP\left\{
P_{\sN_C}\sQ_C(V_1)
\in\relint N_{\Lambda_C}^{\sN_C}(\nu)
\,\middle|\,\sJ_C,\Lambda_C\right\}.
\]
Spherical symmetry and continuity identify $\alpha_\nu$ with the normalized spherical measure of $N_{\Lambda_C}^{\sN_C}(\nu)$, so
\[
\sum_{\nu\in\operatorname{Vert}(\Lambda_C)}\alpha_\nu=1,
\qquad
0<\alpha_\nu\le\frac12.
\]
Indeed, $0<\alpha_{\nu}$ follows from the full-dimensionality of each normal cone, while $\alpha_\nu\leq \frac{1}{2}$ follows from pointedness: the spherical section of $N_{\Lambda_C}^{\sN_C}(\nu)$ is disjoint from its antipodal image, and the two sets have the same spherical measure.

Up to the boundary-null event, $\mathcal O_q^c$ is the event that all $q$ residuals lie within the same $\relint N_{\Lambda_C}^{\sN_C}(\nu)$ for some $\nu\in\operatorname{Vert}(\Lambda_C)$. Conditional independence therefore gives
\[
\begin{aligned}
\PP\bigl(\mathcal O_q^c\mid\sJ_C,\Lambda_C\bigr)
&=\sum_{\nu\in\operatorname{Vert}(\Lambda_C)}\alpha_\nu^q
\le\left(\max_{\nu\in\operatorname{Vert}(\Lambda_C)}\alpha_\nu\right)^{q-1}
\sum_{\nu\in\operatorname{Vert}(\Lambda_C)}\alpha_\nu
\le 2^{-(q-1)}.
\end{aligned}
\]
Taking the complementary event proves \eqref{eq::Oq}.

It remains to verify that this event implies the success of the cone-selection test of Theorem~\ref{thm:spectrahedral-deterministic}. Choose the distinct indices $\ell_1,\ell_2\in[q]$ and distinct vertices $\nu_1,\nu_2$ supplied by the event $\mathcal O_q$, so that
\[
P_{\sN_C}\sQ_C(V_{\ell_j})
\in\relint N_{\Lambda_C}^{\sN_C}(\nu_j),
\qquad j=1,2,
\]
and set $\sW:=\Span\{V_{\ell_1},V_{\ell_2}\}$. For $j=1,2$, define
\[
\sC_j:=\left\{V\in\sW\setminus\{0\}:
P_{\sN_C}\sQ_C(V)
\in\relint N_{\Lambda_C}^{\sN_C}(\nu_j)\right\}.
\]
Each $\sC_j$ is nonempty because it contains $V_{\ell_j}$. It is relatively open in $\sW\setminus\{0\}$ because $V\mapsto P_{\sN_C}\sQ_C(V)$ is continuous and $\relint N_{\Lambda_C}^{\sN_C}(\nu_j)$ is open in $\sN_C$. It is also a cone because this map is homogeneous of degree two. Thus $\sW,\sC_1,\sC_2,\nu_1,\nu_2$ satisfy the conditions of Theorem~\ref{thm:spectrahedral-deterministic}, thereby completing the proof.\qed

\subsection{Proof of Lemma~\ref{lem:whitening-operator}}\label{app::whitening-operator}
Since $\ker\sT^*=(\Im\sT)^\perp$,
we have
\[
\ker\!\left(\left.\sT^*\right|_{\Im\sT}\right)
=
\Im\sT\cap(\Im\sT)^\perp
=
\{0\}.
\]
Thus $\left.\sT^*\right|_{\Im\sT}$ is injective. Moreover,
\[
\dim\Im\sT
=
\rank\sT
=
\rank\sT^*
=
\dim\Im\sT^*,
\]
so $\left.\sT^*\right|_{\Im\sT}:
\Im\sT\longrightarrow\Im\sT^*$
is a linear isomorphism. Since $\iota_\star$ and
$\Sigma^{-1/2}$ are also linear isomorphisms, so is $\sH$.
For every $A\in\mathbb{S}^n$, the orthogonal decomposition $A=P_{\Im\sT}A+P_{(\Im\sT)^\perp}A$ and the identity $(\Im\sT)^\perp=\ker\sT^*$ give $\sT^*[A]=\sT^*\!\left[P_{\Im\sT}A\right].$ Consequently,
\[
\left(\left.\sT^*\right|_{\Im\sT}\right)^{-1}
\sT^*[A]
=
P_{\Im\sT}A.
\]
In particular, for every measurement index $i$,
\[
\begin{aligned}
\sH\bigl(\sT^*[A_i]\bigr)
=
\Sigma^{-1/2}
\iota_\star
\left(\left.\sT^*\right|_{\Im\sT}\right)^{-1}
\sT^*[A_i] =
\Sigma^{-1/2}
\iota_\star\!\left(P_{\Im\sT}A_i\right).
\end{aligned}
\]
The hypothesis therefore yields
\[
\sH\bigl(\sT^*[A_1]\bigr),\ldots,
\sH\bigl(\sT^*[A_m]\bigr)
\stackrel{\mathrm{i.i.d.}}{\sim}
N(0,I_{d_\star}).
\]
Recall that $\sJ_C^*[e_i]=\sT^*[A_i]$ for $i\in\Omega_C$ and $\sJ_N^*[e_j]=\sT^*[A_j]$ for $j\in\Omega_N$.
Conditional on $\varepsilon$, the index sets $\Omega_C$ and
$\Omega_N$ are fixed. Since $\varepsilon$ is independent of the
measurement matrices, conditioning on $\varepsilon$ does not alter
their joint distribution. Hence
\[
\bigl\{\sH\circ\sJ_C^*[e_i]:i\in\Omega_C\bigr\}
\quad\text{and}\quad
\bigl\{\sH\circ\sJ_N^*[e_j]:j\in\Omega_N\bigr\}
\]
are independent standard Gaussian vectors in
$\mathbb{R}^{d_\star}$. This proves
Assumption~\ref{ass:whitening}.

It remains to verify the final assertion. In each of the three models,
the measurement matrices are isotropic Gaussian random elements,
up to a deterministic scaling, on a linear subspace $\sG$ containing
$\Im\sT$:

\begin{itemize}
\item for nonnegative sparse regression, $\sG$ is the subspace of
diagonal matrices and $A_i=\diag(a_i)$ is standard Gaussian on
$\sG$;

\item for symmetric matrix sensing, $\sG=\mathbb{S}^n$ and
$A_i\sim\mathrm{GOE}(n)$ is standard Gaussian on $\mathbb{S}^n$
under the Frobenius inner product;

\item for asymmetric matrix sensing, $\sG$ is the subspace of
symmetric matrices with zero diagonal blocks, and
\[
A_i=\frac12
\begin{bmatrix}
	0&\Gamma_i\\
	\Gamma_i^\top&0
\end{bmatrix}
\]
is Gaussian on $\sG$ with covariance
$\frac12 I_{\sG}$.
\end{itemize}

Since $\Im\sT\subseteq\sG$, orthogonal projection onto $\Im\sT$
shows that
\[
\iota_\star\!\left(P_{\Im\sT}A_i\right)
\stackrel{\mathrm{i.i.d.}}{\sim}
N(0,cI_{d_\star}),
\]
where $c=1$ for nonnegative sparse regression and symmetric matrix
sensing, and $c=\frac12$ for asymmetric matrix sensing. Thus the
hypothesis holds with $\Sigma=cI_{d_\star}$ in all three cases.\qed

\subsection{Proof of Proposition~\ref{prop:interior-point-event}}\label{app::nondeg-hp}

\emph{Proof of Condition (I).} 
Write $h:=\widetilde{\sJ}_N^*[\sign(\varepsilon_N)]$ and $\sZ_C:=\{\widetilde{\sJ}_C^*[v] : v\in[-1,1]^{m_C}\}$. Since $\Lambda_C=\{v\in[-1,1]^{m_C}:\widetilde{\sJ}_C^*[v]=h\}$, showing $\Lambda_C\cap(-1,1)^{m_C}\neq\emptyset$ reduces to showing $h$ lies in the interior of $\sZ_C$. It suffices to exhibit $\tau>0$ such that the ball $\sB(\tau):=\{z:\|z\|_2<\tau\}$ satisfies $\sB(\tau)\subseteq\sZ_C$ and $h\in\sB(\tau)$. Define 
\[
\tau:=\inf_{\|u\|_2=1}\left\|\widetilde{\sJ}_C [u]\right\|_1
\]
Suppose, for contradiction, that some $z\in\sB(\tau)$ satisfies $z\notin\sZ_C$. Since $\sZ_C$ is closed and convex, the separating hyperplane theorem gives a unit vector $u$ with
\[
\inp{u}{z} > \sup_{w\in\sZ_C}\inp{u}{w}.
\]
But
\[
\inp{u}{z} \leq \|z\|_2 < \tau \leq \left\|\widetilde{\sJ}_C [u]\right\|_1 = \sup_{\|y\|_\infty\leq 1}\inp{\widetilde{\sJ}_C [u]}{y} = \sup_{w\in\sZ_C}\inp{u}{w},
\]
where the middle equality writes the $\ell_1$ norm as a supremum over the dual ball and the last equality uses $\sZ_C=\widetilde{\sJ}_C^*([-1,1]^{m_C})$. This contradicts the strict inequality above, so $\sB(\tau)\subseteq\sZ_C$.

Next, we show $\|h\|<\tau$. Note for $m_N=0$, we have $h = 0$, and since $\tau > 0$ by the above construction, this inequality holds trivially. Let $\mu:=\Ex|Z|=\sqrt{2/\pi}$ for $Z\sim N(0,1)$. Since $\widetilde{\sJ}_C$ is an $m_C\times d_\star$ standard Gaussian matrix, \citet[Theorem~9.6.3]{vershynin2018high} gives
\[
\Ex\tau \geq \mu m_C - c\sqrt{m_C d_\star}
\]
for a universal constant $c>0$. Under the assumption $m_C\geq c_2 d_\star$ with $c_2:=4c^2/\mu^2$, this simplifies to $\Ex\tau\geq(\mu/2)m_C$.
The map $X\mapsto\inf_{\|u\|_2=1}\|Xu\|_1$ is $\sqrt{m_C}$-Lipschitz with respect to the Frobenius norm, so by concentration for Lipschitz functions of Gaussian matrices~\citep[Theorem~5.2.2]{vershynin2018high},
\[
\PP\{\tau \geq \Ex\tau - t\} \geq 1 - \exp\left(-\frac{t^2}{2m_C}\right).
\]
Setting $t=(\mu/4)m_C$ and using $\Ex\tau\geq(\mu/2)m_C$ gives
\begin{equation}
\PP\left\{\tau > \frac{\mu}{4}m_C\right\} \geq 1 - \exp\left(-\frac{m_C}{32}\right).\nonumber
\end{equation}
Since $\widetilde{\sJ}_N^*$ is an $d_\star\times m_N$ standard Gaussian matrix and $h=\widetilde{\sJ}_N^*[\sign(\varepsilon_N)]$ , we have $h\sim N(0,m_N I_{d_\star})$. As $x\mapsto\|x\|_2$ is $1$-Lipschitz, the same concentration argument (applied to the Gaussian vector $h/\sqrt{m_N}\sim N(0,I_{d_\star})$) yields
\begin{equation}
\PP\left\{\|h\|_2 < 2\sqrt{m_N d_\star}\right\} \geq 1 - \exp\left(-\frac{d_\star}{2}\right).\nonumber
\end{equation}
If $m_C\geq c_3 d_\star$ with $c_3:=64/\mu^2$, then $(\mu/4)m_C\geq 2\sqrt{m_N d_\star}$ (using $m_N\leq m_C$, since $m_C>m/2$). A union bound gives
\[
\PP(\|h\|_2<\tau) \geq 1 - \exp\left(-\frac{m_C}{32}\right) - \exp\left(-\frac{d_\star}{2}\right),
\]
provided $m_C\geq\max\{c_2,c_3\}d_\star$. On this event, $h\in\sB(\tau)\subseteq\sZ_C$, so $\Lambda_C\cap(-1,1)^{m_C}\neq\emptyset$, completing the proof of Condition (I).

\medskip
\noindent\emph{Proof of Condition~(N).}
Since \(m_C>m/2\) and \(m\geq c_1d_\star\) with \(c_1>2\), we have
\(m_C\geq d_\star\). Hence the Gaussian matrix
\(\widetilde{\sJ}_C^*\) has rank \(d_\star\) almost surely. Because
the whitening operator \(\sH\) is an isomorphism and $\widetilde \sJ_C^* = \sH\circ \sJ_C^*$,
\[
\rank\sJ_C
=
\rank\sJ_C^*
=
\rank\widetilde{\sJ}_C^*
=
d_\star
\qquad\text{almost surely}.
\]
We first establish that, almost surely,
\begin{equation}\label{eq:vertex-free-lower-bound}
|\sI_<(v)|\geq d_\star
\qquad
\text{for every }v\in\Lambda_C.
\end{equation}
Fix $\sD\subseteq[m_C]$ with $|\sD|<d_\star$. Let $\widetilde{\sJ}_{C,\sD}$ denote the restriction of $\widetilde{\sJ}_C$ to the coordinates in $\sD$, and write $\sD^c:=[m_C]\setminus\sD$.  Define
\[
\mathcal B_{\sD,\tau}
:=
\left\{
\begin{array}{c}
\text{there exists }v\in\Lambda_C
\text{ such that }\sI_<(v)=\sD \text{ and }v_{\sD^c}=\tau
\end{array}
\right\}.
\]
The event on which~\eqref{eq:vertex-free-lower-bound} fails is exactly
\begin{equation}\label{eq:vertex-free-lower-bound-failure}
\bigl\{\text{there exists }v\in\Lambda_C
\text{ with }|\sI_<(v)|<d_\star\bigr\}
=\bigcup_{\substack{\sD\subseteq[m_C]\\|\sD|<d_\star}}
\ \bigcup_{\tau\in\{\pm1\}^{\sD^c}}
\mathcal B_{\sD,\tau}.
\end{equation}
This is a finite union, so it suffices to prove $\PP(\mathcal B_{\sD,\tau})=0$ for every fixed choice of $\sD$ and $\tau$.
Fix such a pair $(\sD,\tau)$. Recall $h=\widetilde{\sJ}_N^*[\sign(\varepsilon_N)]$. On the event $\mathcal B_{\sD,\tau}$, the free coordinates $v_{\sD}$ of the corresponding point $v\in\Lambda_C$ satisfy
\begin{equation}\label{eq:vertex-face-intersection}
\widetilde{\sJ}_{C,\sD}^*[v_{\sD}]
=h-\widetilde{\sJ}_{C,\sD^c}^*[\tau].
\end{equation}
Let $e_1,\ldots,e_{m_C}$ be the standard basis of $\RR^{m_C}$. Since $|\sD|<d_\star\le m_C$, the complement $\sD^c$ is nonempty; fix $j\in\sD^c$. Equation~\eqref{eq:vertex-face-intersection} can hold only if
\[
\widetilde{\sJ}_C^*[e_j]
\in \tau_j\left(
h-\sum_{i\in\sD^c\setminus\{j\}}
\tau_i\widetilde{\sJ}_C^*[e_i]
\right)
+\Im\widetilde{\sJ}_{C,\sD}^* =:\mathscr L_{\sD,\tau,j}.
\]
Let $\mathscr F_j :=\sigma\bigl(h,\,\widetilde{\sJ}_C^*[e_i]: i\in[m_C]\setminus\{j\}\bigr)$ be the $\sigma$-algebra generated by the random variables $h$ and all columns of $\widetilde{\sJ}_C^*$ except the $j$th.
Since $j\notin\sD$, conditional on $\mathscr F_j$, the set $\mathscr L_{\sD,\tau,j}$ is a fixed affine subspace of $\RR^{d_\star}$ of dimension at most $|\sD|<d_\star$. Due to Assumption~\ref{ass:whitening}, $\widetilde{\sJ}_C^*[e_j]\sim N(0, I_{d_\star})$ and is independent of $\mathscr F_j$. Therefore,
\[
\PP\bigl(\mathcal B_{\sD,\tau}\mid\mathscr F_j\bigr)
\le
\PP\bigl\{\widetilde{\sJ}_C^*[e_j]
\in\mathscr L_{\sD,\tau,j}\mid\mathscr F_j\bigr\}
=0
\qquad\text{almost surely}.
\]
Taking expectations gives $\PP(\mathcal B_{\sD,\tau})=0$. Since this holds for every term in the finite union~\eqref{eq:vertex-free-lower-bound-failure}, the union has probability zero, proving~\eqref{eq:vertex-free-lower-bound}.

Now fix a vertex \(\nu\) of \(\Lambda_C\) and set $\sD:=\sI_<(\nu).$
We claim that $\widetilde{\sJ}_{C,\sD}^*$
is injective. Otherwise, there exists a nonzero vector
\(u_{\sD}\in\ker\widetilde{\sJ}_{C,\sD}^*\). Extend it to
\(u\in\mathbb{R}^{m_C}\) by setting \(u_{\sD^c}=0\). Since
\(|\nu_i|<1\) for every \(i\in\sD\), there exists \(\epsilon>0\)
such that
\[
\nu+\epsilon u,\ \nu-\epsilon u\in[-1,1]^{m_C}.
\]
Moreover, $\widetilde{\sJ}_C^*[u]=\widetilde{\sJ}_{C,\sD}^*[u_{\sD}]=0.$
It follows that $	\widetilde{\sJ}_C^*[\nu\pm\epsilon u]=\widetilde{\sJ}_C^*[\nu]=h.$
Thus, \(\nu+\epsilon u\) and \(\nu-\epsilon u\) are two distinct
points of \(\Lambda_C\) whose midpoint is \(\nu\), contradicting the
assumption that \(\nu\) is a vertex. Therefore,
\(\widetilde{\sJ}_{C,\sD}^*\) is injective, which implies $	|\sD|\leq d_\star.$
Combining this with \eqref{eq:vertex-free-lower-bound} gives $|\sD|=d_\star.$
Consequently,
\[
\rank\widetilde{\sJ}_{C,\sD}^*
=
\rank\widetilde{\sJ}_{C,\sD}
=
d_\star.
\]
Since \(\sH\) is an isomorphism on the image of
\(\sJ_{C,\sD}^*\), we conclude that
\[
\rank\sJ_{C,\sD}
=
\rank\sJ_{C,\sD}^*
=
\rank\widetilde{\sJ}_{C,\sD}^*
=
d_\star
=
\rank\sJ_C.
\]
Since this argument applies to every vertex
\(\nu\), Condition~\emph{(N)} holds almost surely.

Finally, combining the high-probability event for
Condition~\emph{(I)} with the almost-sure event for
Condition~\emph{(N)}, and adjusting the universal constants, gives
\[
\PP\left\{
\text{Conditions~\emph{(I)} and~\emph{(N)} hold}
\right\}
\geq
1-\exp(-c_2d_\star).
\]
This completes the proof.\qed

\subsection{Proof of Theorem~\ref{thm:nonnegative-sparse}}\label{app::cone-select-sp-hp}
Fix a ground-truth factor $u^\star$, and set $S:=\supp(u^\star)=\supp(x^\star)$ and $q:=n-s\geq2$.

\emph{Criticality and affine-cube geometry.}
Using the argument of \citet[Corollary~1]{ma2025can}, specialized to diagonal Gaussian measurement matrices, $u^\star$ is a critical point of $f_{\mathrm{sp}}$ with probability at least $1-e^{-c s}$ whenever $m\geq c_1s$, where $c,c_1>0$ depend only on $p$. For nonnegative sparse regression, the linearized design map is $\sT: \RR^n\to \SS^n$ with $\sT[V]=2\diag(u^\star\odot V)$. Its image is the space of diagonal matrices supported on $S$, and therefore $d_\star=\dim\Im\sT=s$. Lemma~\ref{lem:whitening-operator} and Proposition~\ref{prop:interior-point-event} then imply that Assumption~\ref{ass:affine-cube} holds with probability at least $1-e^{-c's}$, after increasing $c_1$ if necessary. A union bound, followed by an adjustment of constants, shows that with probability at least $1-e^{-c_2s}$, the factor $u^\star$ is critical and Assumption~\ref{ass:affine-cube} holds simultaneously.

\emph{Isotropic kernel directions.}
Enumerate $S^c = [n]\backslash S$ as $\{i_1,\ldots,i_q\}$ and, for each $a\in[q]$, set $V_a:=e_{i_a}$. Since $u^\star_{i_a}=0$,
\[
\sT[V_a]
=
2\diag(u^\star\odot V_a)
=
0,
\]
and hence $V_a\in\ker\sJ_C$. Moreover,
\[
\sQ_C(V_a)
=
\sA_C(V_aV_a^\top)
=
\bigl(a_{\ell,i_a}\bigr)_{\ell\in\Omega_C}.
\]
Thus $\sQ_C(V_1),\ldots,\sQ_C(V_q)$ are independent standard Gaussian vectors in $\RR^{m_C}$. More importantly, this remains true after conditioning on $\sJ_C$ and $\Lambda_C$. To see this, note that $\sJ_C$ depends only on the on-support coordinates $\{a_{\ell,S}:\ell\in\Omega_C\}$, while $\Lambda_C$ is determined by these coordinates, their noisy counterparts $\{a_{\ell,S}:\ell\in\Omega_N\}$, and $\varepsilon$. In contrast, $\sQ_C(V_a)$ depends only on the off-support coordinates $\{a_{\ell,i_a}:\ell\in\Omega_C\}$. Independence of the Gaussian coordinates and independence of $\varepsilon$ from the measurements therefore give the claimed conditional independence.

Since $\sN_C$ is determined by $\sJ_C$, it follows that, conditional on $\sJ_C$ and $\Lambda_C$,
\[
P_{\sN_C}\sQ_C(V_1),\ldots,
P_{\sN_C}\sQ_C(V_q)
\]
are independent standard Gaussian vectors on $\sN_C$; in particular, their distributions are continuous and spherically symmetric. Moreover, $m_C>m/2>s$ after increasing $c_1$ if necessary, and $\rank\sJ_C=s$ almost surely, so $\dim\sN_C=m_C-s\geq1$. All hypotheses of Proposition~\ref{lem:isotropic-branch-occupancy} are therefore satisfied, implying that, conditional on $\sJ_C$ and $\Lambda_C$, the cone-selection test succeeds with probability at least $1-2^{-(q-1)}$. Theorem~\ref{thm:spectrahedral-deterministic} then implies that $u^\star$ is inactive.

\emph{Probability bound and uniformity.}
Combining the preceding high-probability events and recalling that $q=n-s$ gives
\[
\PP\{u^\star\text{ is an inactive critical point}\}
\geq
1-e^{-c_2s}-2^{-(n-s-1)}.
\]
Finally, any other ground-truth factor $\widehat u^\star$ satisfying $\widehat u^\star\odot\widehat u^\star=x^\star$ has the form $\widehat u^\star=Du^\star$ for a diagonal sign matrix $D$. The map $u\mapsto Du$ is orthogonal and leaves $f_{\mathrm{sp}}$ invariant. Lemma~\ref{lem:orthogonal-invariance} therefore shows that, on the same event, every ground-truth factor is an inactive critical point. No union bound over the ground-truth factors is needed. This proves the theorem. \qed

\subsection{Proof of Theorem~\ref{thm:sym-MS}}\label{app::cone-select-sym-hp}
The proof follows the same structure as that of Theorem~\ref{thm:nonnegative-sparse}. Fix a ground-truth factor $U^\star$ satisfying
$U^\star{U^\star}^\top=X^\star$, and set $q:=n-r\geq2$.

\emph{Criticality and affine-cube geometry.}
By \citet[Corollary~1]{ma2025can}, $U^\star$ is a critical point of $f_{\mathrm{sym}}$ with probability at least $1-e^{-cnr}$ whenever $m\geq c_1nr$, where $c,c_1>0$ depend only on $p$. In the symmetric model, $\sL$ is the identity and the linearized design map is $\sT[V]=U^\star V^\top+V{U^\star}^\top.$
To compute the dimension of its image, choose an orthogonal matrix $Q\in\RR^{k\times k}$ such that $U^\star Q=(\bar U,0)$ for some full-rank
$\bar U\in\RR^{n\times r}$. Then
\[
\Im\sT
=\{ U^\star QQ^\top V^\top+VQQ^\top{U^\star}^\top:V\in\RR^{n\times k}\} = 
\{\bar U H^\top+H\bar U^\top:H\in\RR^{n\times r}\}.
\]
The kernel of $H\mapsto\bar U H^\top+H\bar U^\top$ is $\{\bar U K:K^\top=-K\}$. The rank-nullity theorem gives
\[
d_\star=\dim\Im\sT=nr-\frac{r(r-1)}2.
\]
Lemma~\ref{lem:whitening-operator} and Proposition~\ref{prop:interior-point-event} now imply that Assumption~\ref{ass:affine-cube} holds with probability at least $1-e^{-c'd_\star}$, after increasing $c_1$ if necessary. Since $d_\star\geq nr/2$, a union bound and an adjustment of constants show that, with probability at least $1-e^{-c_2nr}$, the factor $U^\star$ is critical and Assumption~\ref{ass:affine-cube} holds simultaneously.

\emph{Isotropic kernel directions.}
Choose a unit vector $w\in\ker U^\star$, which exists because $k\geq r+1$, and let $u_1,\ldots,u_q$ be an orthonormal basis of $\col(U^\star)^\perp$. For each $j\in[q]$, set $V_j:=u_jw^\top.$ Since $U^\star w=0$,
\[
\sT[V_j] =U^\star V_j^\top+V_j{U^\star}^\top =0,
\]
and hence $V_j\in\ker\sJ_C$. Moreover, because $\|w\|=1$,
\[
\sQ_C(V_j) =\sA_C(V_jV_j^\top) =\sA_C(u_ju_j^\top) =\bigl(\langle\Gamma_\ell,u_ju_j^\top\rangle\bigr)_{\ell\in\Omega_C}.
\]
The matrices $u_ju_j^\top$, $j\in[q]$, form an orthonormal family in $(\Im\sT)^\perp$. Gaussian isotropy therefore shows that $\sQ_C(V_1),\ldots,\sQ_C(V_q)$ are independent standard Gaussian vectors in $\RR^{m_C}$. This conclusion remains valid after conditioning on $\sJ_C$ and $\Lambda_C$: the pair $(\sJ_C,\Lambda_C)$ is determined by the clean and corrupted measurements restricted to $\Im\sT$, together with $\varepsilon$, whereas the vectors $\sQ_C(V_j)$ depend on mutually orthogonal measurement components in $(\Im\sT)^\perp$. Independence of orthogonal Gaussian components and independence of $\varepsilon$ from the measurements give the claim.

Since $\sN_C$ is determined by $\sJ_C$, it follows that, conditional on $\sJ_C$ and $\Lambda_C$,
\[
P_{\sN_C}\sQ_C(V_1),\ldots, P_{\sN_C}\sQ_C(V_q)
\]
are independent standard Gaussian vectors on $\sN_C$; in particular, their distributions are continuous and spherically symmetric. Furthermore, $m_C\geq(1-p)m$. After increasing $c_1$ if necessary, Gaussian injectivity on the fixed space $\Im\sT$ gives $\rank\sJ_C=d_\star$ almost surely and
\[
\dim\sN_C=m_C-d_\star\geq1.
\]
All hypotheses of Proposition~\ref{lem:isotropic-branch-occupancy} are therefore satisfied. Conditional on $\sJ_C$ and $\Lambda_C$, with probability at least $1-2^{-(q-1)}$, the cone-selection test in Theorem~\ref{thm:spectrahedral-deterministic} succeeds. That theorem then implies that $U^\star$ is inactive.

\emph{Probability bound and uniformity.}
Combining the preceding high-probability events and recalling that $q=n-r$ gives
\[
\PP\{U^\star\text{ is an inactive critical point}\} \geq 1-e^{-c_2nr}-2^{-(n-r-1)}.
\]
Finally, every other ground-truth factor $\widehat U^\star$ satisfying $\widehat U^\star(\widehat U^\star)^\top=X^\star$ has the form $\widehat U^\star=U^\star Q$ for some orthogonal $Q\in\RR^{k\times k}$. The map $U\mapsto UQ$ is orthogonal and leaves $f_{\mathrm{sym}}$ invariant. Lemma~\ref{lem:orthogonal-invariance} therefore shows that, on the same event, every ground-truth factor is an inactive critical point. No union bound over the ground-truth factors is needed. This proves the theorem. \qed

\subsection{Proof of Theorem~\ref{thm:asym-MS}}\label{app::cone-select-asym-hp}
Rather than constructing $q\geq2$ distinct directions $V_1,\ldots,V_q\in\ker\sJ_C$ with spherically symmetric projected second-order residuals and invoking the resulting success probability $1-2^{-(q-1)}$, we use a sharper argument. We construct two directions $V_1,V_2\in\ker\sJ_C$ whose projected second-order residuals take the form $g$ and $-g$, where $g\neq0$ almost surely. Since the boundaries of the vertex normal cones have measure zero and each such cone is pointed, $g$ and $-g$ lie in the relative interiors of two distinct vertex normal cones almost surely. Consequently, the cone-selection test succeeds almost surely, improving the probability $1-2^{-(q-1)}$ to one.

\begin{proposition}[Two-sector branching implies successful cone selection]
\label{prop:two-sector-branching}
Fix a critical point $U^\star$ of $f_{\mathrm{LR}}$ satisfying $U^\star{U^\star}^\top=X^\star$. Suppose that Assumption~\ref{ass:affine-cube} holds and that $\dim\sN_C\geq 1$. Suppose further that there exist linearly independent directions $V_1,V_2\in\ker\sJ_C$ and a random vector $g\in\sN_C$ such that, conditional on $\sJ_C$ and $\Lambda_C$, the law of $g$ is absolutely continuous with respect to Lebesgue measure on $\sN_C$ and, almost surely,
\begin{equation}\label{eq:two-sector-residual-main}
	P_{\sN_C}\sQ_C(aV_1+bV_2)=ab\,g
	\qquad\text{for all }a,b\in\RR.
\end{equation}
Then, conditional on $\sJ_C$ and $\Lambda_C$, the cone-selection test in Theorem~\ref{thm:spectrahedral-deterministic} succeeds almost surely. Specifically, there exist two distinct vertices $\nu_1,\nu_2$ of $\Lambda_C$, a linear subspace $\sW\subseteq\ker\sJ_C$, and two nonempty relatively open cones $\sC_1,\sC_2\subseteq\sW$ such that
\[
P_{\sN_C}\sQ_C(V)
\in
\relint N_{\Lambda_C}^{\sN_C}(\nu_j)
\qquad
\text{for all }V\in\sC_j,\quad j=1,2.
\]
\end{proposition}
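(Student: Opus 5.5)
The plan mirrors the proof of Proposition~\ref{lem:isotropic-branch-occupancy}, with the probabilistic ``same-cone'' estimate replaced by a deterministic antipodality argument. Set $\sW:=\Span\{V_1,V_2\}$, which is two-dimensional by linear independence and contained in $\ker\sJ_C$. By \eqref{eq:two-sector-residual-main}, the projected residual map $V\mapsto P_{\sN_C}\sQ_C(V)$ restricted to $\sW$ sends $aV_1+bV_2$ to $ab\,g$. Thus the quadrants $\{ab>0\}$ produce positive multiples of $g$, and the quadrants $\{ab<0\}$ produce positive multiples of $-g$.

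\textbf{Step 1 (measure-zero exclusions).} Conditional on $\sJ_C$ and $\Lambda_C$, the normal cones $N_{\Lambda_C}^{\sN_C}(\nu)$, $\nu\in\operatorname{Vert}(\Lambda_C)$, form a fixed finite family. By Proposition~\ref{prop:fan-structure} and the cited facts from \citep{cox2024toric}, these cones are full-dimensional, pointed, and cover $\sN_C$, and their relative interiors are pairwise disjoint. Their relative boundaries form a finite union of lower-dimensional polyhedral sets, hence a Lebesgue-null set. The antipodal image of this union is also null. Absolute continuity of the conditional law of $g$ therefore gives, almost surely, $g\neq0$ and neither $g$ nor $-g$ lies on any cone boundary. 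Since the cones cover $\sN_C$, there are vertices $\nu_1,\nu_2$ with $g\in\relint N_{\Lambda_C}^{\sN_C}(\nu_1)$ and $-g\in\relint N_{\Lambda_C}^{\sN_C}(\nu_2)$.

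\textbf{Step 2 (distinctness).} The step that needs an argument is $\nu_1\neq\nu_2$. Suppose instead $\nu_1=\nu_2=\nu$. Then both $g$ and $-g$ lie in the convex cone $N_{\Lambda_C}^{\sN_C}(\nu)$. Pointedness forces $g=0$, which contradicts Step 1. Pointedness holds because $\Lambda_C$ is full-dimensional in $\bar v+\sN_C$ under condition~(I).

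\textbf{Step 3 (open cones).} Define $\sC_1:=\{aV_1+bV_2:ab>0\}$ and $\sC_2:=\{aV_1+bV_2:ab<0\}$. Each is a nonempty, relatively open cone in $\sW$, and each is closed under positive scaling. For $V\in\sC_1$ we have $P_{\sN_C}\sQ_C(V)=ab\,g$ with $ab>0$. Relative interiors of cones are invariant under positive scaling, so this point lies in $\relint N_{\Lambda_C}^{\sN_C}(\nu_1)$. Likewise, for $V\in\sC_2$ we have $P_{\sN_C}\sQ_C(V)=|ab|(-g)\in\relint N_{\Lambda_C}^{\sN_C}(\nu_2)$. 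These are exactly the conditions required in Theorem~\ref{thm:spectrahedral-deterministic}, and they hold on an event of conditional probability one.

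I expect no real obstacle here. The only care needed is in Step 1: the null-set exclusion must be made conditional on $(\sJ_C,\Lambda_C)$, which fixes the cone family before $g$ is drawn.
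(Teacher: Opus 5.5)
Your proposal is correct and follows the paper's proof essentially step for step. Both arguments exclude, via absolute continuity of $g$, the null set consisting of the vertex normal-cone boundaries and their antipodes. Both obtain $\nu_1\neq\nu_2$ from pointedness, and both use the quadrant cones $\sC_1=\{aV_1+bV_2:ab>0\}$ and $\sC_2=\{aV_1+bV_2:ab<0\}$ in $\sW=\Span\{V_1,V_2\}$. Your explicit remark that conditioning on $(\sJ_C,\Lambda_C)$ fixes the cone family before $g$ is drawn is a small clarification that the paper leaves implicit.
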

\begin{proof}
By Proposition~\ref{prop:fan-structure}, $\Lambda_C$ is full-dimensional in its affine space. Hence its vertex normal cones $N_{\Lambda_C}^{\sN_C}(\nu)$, $\nu\in\operatorname{Vert}(\Lambda_C)$, are full-dimensional and pointed, cover $\sN_C$, and have pairwise disjoint relative interiors~\citep[Theorem~2.3.2 and Proposition~2.3.6]{cox2024toric}. The union of their boundaries therefore has Lebesgue measure zero in $\sN_C$. By the absolute continuity of the conditional law of $g$, almost surely both $g$ and $-g$ are nonzero and avoid these boundaries. Hence there are unique vertices $\nu_1,\nu_2\in\operatorname{Vert}(\Lambda_C)$ such that $g\in\relint N_{\Lambda_C}^{\sN_C}(\nu_1)$ and $-g\in\relint N_{\Lambda_C}^{\sN_C}(\nu_2)$.
The vertices are distinct; otherwise the pointed cone $N_{\Lambda_C}^{\sN_C}(\nu_1)$ would contain both $g$ and $-g$, and hence the nonzero line $\Span{g}$, which is impossible.
Set $\sW:=\Span\{V_1,V_2\}$ and define
\[
\sC_1:=\{aV_1+bV_2:ab>0\},
\qquad
\sC_2:=\{aV_1+bV_2:ab<0\}.
\]
Because $V_1$ and $V_2$ are linearly independent, $\sC_1$ and $\sC_2$ are nonempty relatively open cones in $\sW$. For every $V=aV_1+bV_2\in\sC_1$, equation~\eqref{eq:two-sector-residual-main} yields
\[
P_{\sN_C}\sQ_C(V)
=
ab\,g
\in
\relint N_{\Lambda_C}^{\sN_C}(\nu_1),
\]
because $ab>0$. Similarly, for every $V=aV_1+bV_2\in\sC_2$, we have $ab<0$ and therefore
\[
P_{\sN_C}\sQ_C(V)
=
|ab|(-g)
\in
\relint N_{\Lambda_C}^{\sN_C}(\nu_2).
\]
Thus $\sW$, $\sC_1$, $\sC_2$, $\nu_1$, and $\nu_2$ satisfy the cone-selection conditions in Theorem~\ref{thm:spectrahedral-deterministic}. This completes the proof.
\end{proof}
Using Proposition~\ref{prop:two-sector-branching} in lieu of Proposition~\ref{lem:isotropic-branch-occupancy}, we prove Theorem~\ref{thm:asym-MS}.
\begin{proof}[Proof of Theorem~\ref{thm:asym-MS}]
The proof follows the same structure as those of Theorems~\ref{thm:nonnegative-sparse} and~\ref{thm:sym-MS}. Fix a balanced ground-truth factor pair $(U_1^\star,U_2^\star)$ satisfying $U_1^\star{U_2^\star}^\top=X^\star$ and ${U_1^\star}^\top U_1^\star={U_2^\star}^\top U_2^\star$, and introduce the stacked factor
\[
U^\star
:=
\begin{bmatrix}
	U_1^\star\\
	U_2^\star
\end{bmatrix}
\in\RR^{(n_1+n_2)\times k}.
\]
The objective $f_{\mathrm{asym}}(U_1,U_2)$ is precisely the robust low-rank objective~\eqref{eq:recovery-problem-BM} evaluated at the stacked factor $U=[U_1^\top\ U_2^\top]^\top$.

\emph{Criticality and affine-cube geometry.}
By \citet[Corollary~2]{ma2025can}, the stacked factor $U^\star$, equivalently the pair $(U_1^\star,U_2^\star)$, is a critical point with probability at least $1-e^{-c(n_1+n_2)r}$ whenever $m\geq c_1(n_1+n_2)r$, where $c,c_1>0$ depend only on $p$. For a perturbation
\[
H
=
\begin{bmatrix}
	H_1\\
	H_2
\end{bmatrix}
\in\RR^{(n_1+n_2)\times k},
\]
the linearized design map defined in Section~\ref{sec:low-rank-specialization} is
\[
\sT[H]
=
\sL\left(U^\star H^\top+H{U^\star}^\top\right)
=
\begin{bmatrix}
	0 & U_1^\star H_2^\top+H_1{U_2^\star}^\top\\
	U_2^\star H_1^\top+H_2{U_1^\star}^\top & 0
\end{bmatrix}.
\]
Write $\sT_{12}[H]
:=
U_1^\star H_2^\top+H_1{U_2^\star}^\top$
for its upper-right block. Since $\sT[H]$ is uniquely determined by $\sT_{12}[H]$, we have $\dim\Im\sT=\dim\Im\sT_{12}$.
Because the factor pair is balanced and $X^\star$ has rank $r$, the common Gram matrix ${U_1^\star}^\top U_1^\star={U_2^\star}^\top U_2^\star$ has rank $r$. Choose an orthogonal matrix $Q\in\RR^{k\times k}$ such that $U_1^\star Q=(\bar U_1,0)$ and $U_2^\star Q=(\bar U_2,0)$,
where $\bar U_1\in\RR^{n_1\times r}$ and $\bar U_2\in\RR^{n_2\times r}$ have full column rank. Since right multiplication by $Q$ is a linear isomorphism on the perturbation space,
\[
\Im\sT_{12}
=
\left\{
\bar U_1H_2^\top+H_1\bar U_2^\top:
H_1\in\RR^{n_1\times r},\ H_2\in\RR^{n_2\times r}
\right\}.
\]
The kernel of the map $(H_1,H_2)\mapsto\bar U_1H_2^\top+H_1\bar U_2^\top$ is $\left\{
(\bar U_1K,-\bar U_2K^\top):K\in\RR^{r\times r}
\right\}.$
The rank-nullity theorem therefore gives
\[
d_\star
=\dim\Im\sT
=\dim\Im\sT_{12}
=r(n_1+n_2)-r^2
=r(n_1+n_2-r).
\]
Lemma~\ref{lem:whitening-operator} and Proposition~\ref{prop:interior-point-event} now imply that Assumption~\ref{ass:affine-cube} holds with probability at least $1-e^{-c'd_\star}$, after increasing $c_1$ if necessary. Since $r\leq\min\{n_1,n_2\}$, we have $d_\star\geq\frac12(n_1+n_2)r$. A union bound and an adjustment of constants thus show that, with probability at least $1-e^{-c_2(n_1+n_2)r}$, the stacked factor $U^\star$ is critical and Assumption~\ref{ass:affine-cube} holds simultaneously.

\emph{Two-sector kernel directions.}
The balancing identity gives $\ker U^\star=\ker U_1^\star=\ker U_2^\star.$
Choose unit vectors $w\in\ker U^\star$, $u\in\col(U_1^\star)^\perp$, and $v\in\col(U_2^\star)^\perp$,
which exist because $k\geq r+1$, $n_1\geq r+1$, and $n_2\geq r+1$. Define the stacked directions
\[
V_1
:=
\begin{bmatrix}
	uw^\top\\
	0_{n_2\times k}
\end{bmatrix}	\in\RR^{(n_1+n_2)\times k},
\qquad
V_2
:=
\begin{bmatrix}
	0_{n_1\times k}\\
	vw^\top
\end{bmatrix}
\in\RR^{(n_1+n_2)\times k}.
\]
The directions $V_1$ and $V_2$ are linearly independent. Moreover,
\[
\sT_{12}[V_1]
=u(U_2^\star w)^\top
=0,
\qquad
\sT_{12}[V_2]
=(U_1^\star w)v^\top
=0.
\]
Hence $V_1,V_2\in\ker\sT\subseteq\ker\sJ_C$. For $a,b\in\RR$, the two blocks of $aV_1+bV_2$ are $au w^\top$ and $bv w^\top$, and therefore
\[
\sL\left((aV_1+bV_2)(aV_1+bV_2)^\top\right)
=
ab
\begin{bmatrix}
	0 & uv^\top\\
	vu^\top & 0
\end{bmatrix}.
\]
It follows that the clean second-order residual satisfies
\[
\sQ_C(aV_1+bV_2)
=ab\,z_C,
\qquad
z_C
:=
\bigl(\langle\Gamma_\ell,uv^\top\rangle\bigr)_{\ell\in\Omega_C}.
\]
Furthermore, $uv^\top\in(\Im\sT_{12})^\perp$, because ${U_1^\star}^\top u=0$ and ${U_2^\star}^\top v=0$. Consequently, Gaussian isotropy shows that $z_C$ is a standard Gaussian vector in $\RR^{m_C}$ independent of the measurement components that determine $\sJ_C$ and $\sJ_N$. Since $\varepsilon$ is independent of the measurements, $z_C$ is independent of $(\sJ_C,\sJ_N,\varepsilon)$ and therefore of $(\sJ_C,\Lambda_C)$.
Since $m_C\geq(1-p)m$, after increasing $c_1$ if necessary we have $m_C>d_\star$. Gaussian injectivity on the fixed space $\Im\sT$ then gives $\rank\sJ_C=d_\star$ almost surely, and hence
\[
\dim\sN_C
=m_C-d_\star
\geq1.
\]
Because $\sN_C$ is determined by $\sJ_C$, it follows that, conditional on $\sJ_C$ and $\Lambda_C$, $g:=P_{\sN_C}z_C$
is a standard Gaussian vector on $\sN_C$ and therefore has a density there. Projecting the second-order identity onto $\sN_C$ yields
\[
P_{\sN_C}\sQ_C(aV_1+bV_2)
=ab\,g
\qquad
\text{for all }a,b\in\RR.
\]
Thus all hypotheses of Proposition~\ref{prop:two-sector-branching} are satisfied. That proposition shows that the cone-selection conditions in Theorem~\ref{thm:spectrahedral-deterministic} hold almost surely, and the theorem implies that $U^\star$, equivalently $(U_1^\star,U_2^\star)$, is inactive.

\emph{Probability bound and uniformity.}
Combining the preceding events gives
\[
\PP\left\{
U^\star
\text{ is an inactive critical point}
\right\}
\geq
1-e^{-c_2(n_1+n_2)r}.
\]
Finally, let
\[
\widehat U^\star
:=
\begin{bmatrix}
	\widehat U_1^\star\\
	\widehat U_2^\star
\end{bmatrix}
\]
be any other balanced ground-truth factor. Any two balanced factorizations of $X^\star$ differ by a common right-orthogonal transformation, so $\widehat U^\star=U^\star\widehat Q$ for some orthogonal matrix $\widehat Q\in\RR^{k\times k}$. The map $U\mapsto U\widehat Q$ is orthogonal and leaves the stacked objective, and hence $f_{\mathrm{asym}}$, invariant. Lemma~\ref{lem:orthogonal-invariance} therefore shows that, on the same event, every balanced ground-truth factor is an inactive critical point. No union bound over the balanced factorizations is needed. This proves the theorem.
\end{proof}

\section{Details of the Low-Rank Matrix-Sensing Experiment}\label{app:low-rank-experiment}

Figure~\ref{fig:small-random-tilt-dynamics} uses the symmetric matrix-sensing formulation~\eqref{eq::sym-MS} with exact measurements. We set $n=k=100$, $r=2$, and $m=200$. In each of $50$ independent trials, we generate an orthogonal matrix $Q$ from a standard Gaussian matrix, let $Q_r$ contain its first two columns, and define 
\[
X^\star=Q_r\diag(10,1)Q_r^\top.
\]
Thus, any ground-truth factor $U^\star$ with $U^\star{U^\star}^\top=X^\star$ yields $\|U^\star\|_F^2/\|U^\star\|_{\mathrm{op}}^2=1.1$. The sensing matrices $\Gamma_i$ have i.i.d. standard normal entries, and $y_i=\langle\Gamma_i,X^\star\rangle$.
We compare the $\ell_1$-loss
\[
\phi_0(U)=\frac{1}{m}\sum_{i=1}^m
\left|\left\langle\Gamma_i,UU^\top-X^\star\right\rangle\right|
\]
with $\phi_\lambda(U)=\phi_0(U)+\lambda\langle G,U\rangle$, where $\lambda=0.20$ and $G=\widetilde G/\|\widetilde G\|_F$ for a standard normal matrix $\widetilde G$. The paired trajectories start from the same initialization $U_0=U^\star+E$, where the entries of $E$ are independently drawn from $N(0,(5\cdot10^{-5})^2)$. For $\beta\in\{0,\lambda\}$, we perform $5000$ iterations
\begin{align*}
U_{t+1}^{(\beta)} &=U_t^{(\beta)}-\frac{0.01}{\sqrt{t+1}}\,g_t^{(\beta)},\\
g_t^{(\beta)} &=\frac{1}{m}\sum_{i=1}^m
\sign\!\left(\left\langle\Gamma_i,U_t^{(\beta)}{U_t^{(\beta)}}^\top-X^\star\right\rangle_F\right)
(\Gamma_i+\Gamma_i^\top)U_t^{(\beta)}+\beta G,
\end{align*}
Within each trial, the two trajectories share the ground truth, measurements, and initialization and differ only by the linear perturbation. We record the relative recovery error and effective rank every ten iterations and report all trajectories together with their coordinate-wise medians in Figure~\ref{fig:small-random-tilt-dynamics}. All simulations use NumPy's \texttt{default\_rng}, with seed $2+1009j$ for trial $j=0,\ldots,49$.
\end{document}